\newtheorem{Thm}{Theorem}[section]
\newtheorem{Prop}[Thm]{Proposition}
\newtheorem{Cor}[Thm]{Corollary}
\newtheorem{Lem}[Thm]{Lemma}
\theoremstyle{definition}
\newtheorem*{definition}{Definition}
\newtheorem*{remark}{Remark}
\numberwithin{equation}{section}
\begin{document}

\newcommand{\Coim}{\mathrm{Coim}}
\newcommand{\Z}[0]{\mathbb{Z}}
\newcommand{\Q}[0]{\mathbb{Q}}
\newcommand{\F}[0]{\mathbb{F}}
\newcommand{\N}[0]{\mathbb{N}}
\renewcommand{\O}[0]{\mathcal{O}}
\newcommand{\p}[0]{\mathfrak{p}}
\newcommand{\m}[0]{\mathrm{m}}
\newcommand{\Tr}{\mathrm{Tr}}
\newcommand{\Hom}[0]{\mathrm{Hom}}
\newcommand{\Gal}[0]{\mathrm{Gal}}
\newcommand{\Res}[0]{\mathrm{Res}}
\newcommand{\id}{\mathrm{id}}
\newcommand{\cl}{\mathrm{cl}}
\newcommand{\mult}{\mathrm{mult}}
\newcommand{\adm}{\mathrm{adm}}
\newcommand{\tr}{\mathrm{tr}}
\newcommand{\pr}{\mathrm{pr}}
\newcommand{\Ker}{\mathrm{Ker}}
\newcommand{\ab}{\mathrm{ab}}
\newcommand{\sep}{\mathrm{sep}}
\newcommand{\triv}{\mathrm{triv}}
\newcommand{\alg}{\mathrm{alg}}
\newcommand{\ur}{\mathrm{ur}}
\newcommand{\Coker}{\mathrm{Coker}}
\newcommand{\Aut}{\mathrm{Aut}}
\newcommand{\Ext}{\mathrm{Ext}}
\newcommand{\Iso}{\mathrm{Iso}}
\newcommand{\M}{\mathcal{M}}
\newcommand{\GL}{\mathrm{GL}}
\newcommand{\Fil}{\mathrm{Fil}}
\newcommand{\an}{\mathrm{an}}
\renewcommand{\c}{\mathcal }
\newcommand{\W}{\mathcal W}
\newcommand{\R}{\mathcal R}
\newcommand{\crys}{\mathrm{crys}}
\newcommand{\st}{\mathrm{st}}
\newcommand{\CM}{\mathrm{CM\Gamma }}
\newcommand{\CV}{\mathcal{C}\mathcal{V}}
\newcommand{\G}{\mathrm{G}}
\newcommand{\Map}{\mathrm{Map}}
\newcommand{\Sym}{\mathrm{Sym}}
\newcommand{\Spec}{\mathrm{Spec}}
\newcommand{\Gr}{\mathrm{Gr}}
\newcommand{\I}{\mathrm{Im}}
\newcommand{\Frac}{\mathrm{Frac}}
\newcommand{\LT}{\mathrm{LT}}
\newcommand{\Alg}{\mathrm{Alg}}
\newcommand{\MG}{\mathrm{M\Gamma }}
\newcommand{\To}{\longrightarrow}
\newcommand{\md}{\mathrm{mod}}
\newcommand{\MF}{\mathrm{MF}}
\newcommand{\CMF}{\mathcal{M}\mathcal{F}}
\newcommand{\C}{\mathcal}
\newcommand{\Aug}{\mathrm{Aug}}
\renewcommand{\c}{\mathcal }
\newcommand{\uL}{\underline{\mathcal L}}
\newcommand{\Md}{\mathrm{Md}}
\newcommand{\wt}{\widetilde}

\title[Group schemes of period 2]
{Group schemes of period 2}
\author{Victor Abrashkin}
\address{Department of Mathematical Sciences, 
Durham University, Science Laboratories, 
South Rd, Durham DH1 3LE, United Kingdom \ \&\ Steklov Mathematical 
Institute, Gubkina str. 8, 119991, Moscow, Russia
}
\email{victor.abrashkin@durham.ac.uk}
\date{Oct 31, 2011}
\keywords{group schemes, local field}
\subjclass[2010]{11S20, 14K10}

\begin{abstract} We give an explicit construction of the antiequivalence 
of the category of finite flat commutative group 
schemes of period 2 defined over a 
valuation ring of a 2-adic field with 
algebraically closed residue field. 
This result extends the earlier author's approach 
to group schemes of period $p>2$ 
from Proceedings LMS, 101, 2010, 207-259.
\end{abstract}
\maketitle

\section*{Introduction} \label{S0}

\subsection{Basic notation}\label{S0.1} 

Everywhere in the paper 
$k$ is algebraically closed field of characteristic $2$, 
$K_{00}$ is the fraction field of the ring of Witt 
vectors $W(k)$ and $[K_0:K_{00}]=e\in\N $.   
Let  
$O_0=O_{K_0}$ be the valuation ring of $K_0$, $\pi $ --- a fixed uniformiser
in $K_0$, $K=K_0(\pi )$, where $\pi ^2=\pi _0$, and $O=O_K$. 
We set 
$S=k[[t]]$ where $t$ is a variable. Let $\sigma :S\longrightarrow S$ 
be such that $\sigma (s)=s^2$, $s\in S$. Denote by 
$\kappa _{SO}:S/t^{2e}\To O/2O$  the rings isomorphism such that 
$\kappa _{SO}|_k=\id $  and $\kappa _{SO}:t\operatorname{mod}
t^{2e}\mapsto\pi\operatorname{mod}2$. 

For a natural number $u$, $\underline{i}$ denotes always a vector 
of length $u$ with coordinates from the set $\{0,1\}$ and 
$r(\underline{i})$ denotes the sum of these coordinates.

\subsection{Categories of group schemes}\label{S0.2}

Let $R$ be a local ring of characteristic 0 with residue field $k$. 
Denote by $\Gr _R$ the category of 
finite flat commutative group schemes $G$ over 
$R$ such that $2\id _{G}=0$. 

Recall that $G=\Spec A(G)$, where $A(G)$ is a flat $R$-algebra of finite 
rank $|G|$ and the structure of group scheme on $G$ is given via the 
$R$-algebra morphisms $e_G:A(G)\To O$ (counit) and 
$\Delta _G:A(G)\To A(G)\otimes _{R}A(G)$ (coaddition) 
satisfying standard axioms. 

Denote by $\Gr ^{et}_{R}$ and $\Gr _R^{mult}$ the full subcategories 
in $\Gr _R$  
of etale and, resp., multiplicative group schemes. Then any $G\in\Gr _R$ 
has the maximal etale quotient  
$j^{et}:G\To G^{et}$ and the maximal multiplicative subobject  
$i^{mult}:G^{mult}\To G$. 

Because $k$ is algebraically closed  
any etale object in $\Gr _R$ is a product of finitely many 
copies of the constant etale group scheme of order 2, 
$(\Z /2)_{O}=\Spec\,\Map (\Z /2, R)$. Similarly, any multiplicative 
group scheme in $\Gr _R$  is a product of finitely many copies of 
the constant multiplicative group scheme of order 2, 
$\mu _{2}=\Spec R[\Z /2]$. 

Introduce the 
category $\Gr _R^*$ as follows. Its objects are the objects from 
$\Gr _R$ and for any $G_1,G_2\in\Gr _R$, 
$$\Hom _{\Gr _{R}^*}(G_1,G_2)=\Hom _{\Gr _R}(G_1,G_2)/\C R(G_1,G_2)$$
where $\C R(G_1,G_2)$ consists of the morphisms 
$$G_1\overset{j^{et}}\To G_1^{et}\overset{f}\To 
G_2^{mult}\overset{i^{mult}}\To G_2$$ 
whith arbitrary $f\in\Hom _{\Gr _R}(G_1^{et}, G_2^{mult})$. 
Note that $\Hom _{\Gr _R}((\Z /2)_R,\mu _{2,R})$ has only one 
non-trivial element given by the embedding of $R$-algebras 
$$R[\Z /2]=R\bar 0+R\bar 1\To \Map (\Z /2,R)=R\oplus R$$
such that $\bar 0\mapsto (1,1)$ and $\bar 1\mapsto (1,-1)$.

\subsection{Categories of filtered modules}\label{S0.3}

Let $\CMF _S$ be the category of the triples $(M^0,M^1,\varphi _1)$ 
such that 
$M^1\subset M^0$ are $S$-modules and 
$\varphi _1:M^1\longrightarrow M^0$ 
is a $\sigma $-linear  morphism of $S$-modules. 
The morphisms in $\CMF _S$ are compatible 
morphisms of $S$-modules commuting with $\varphi _1$.

Denote by 
$\MF ^e_S$ the full subcategory in $\CMF _S$ 
consisting of the triples $(M^0,M^1,\varphi _1)$
such that 

$\bullet $\ $M^0$ is a free $S$-module of finite rank;

$\bullet $\ $M^1\supset t^eM^0$;

$\bullet $\ $\varphi _1(M^1)S=M^0$. 

\ \

The full subcategory of etale filtered modules 
$\MF ^{e,et}_S$ in $\MF ^e_S$ 
consists of 
$(M^0,M^1,\varphi _1) $ such that  $M^1=t^eM^0$. 
One can see easily that any 
$\C M=(M^0,M^1,\varphi _1)
\in\MF ^e_S$ has a unique maximal etale subobject 
$i^{et}:\C M^{et}=(M^{0,et}, t^eM^{0,et},\varphi _1)\To\C M$.  
Suppose $\varphi _0:M^0\To M^0$ is such that 
$\varphi _0(m):=\varphi _1(t^em)$ for any $m\in M^0$. 
Then $\bar M^{0,et}:=M^{0,et}\otimes _Sk$ is the maximal 
$k$-submodule of $\bar M^0:=M^0\otimes _Sk$ such that  
$\varphi _0$ induces an invertible $\sigma $-linear automorphism 
on $\bar M^{0, et}$. 
Notice that $i^{et}$ can be included into the following 
short exact sequence 
$$0\To \C M^{et}\overset{i^{et}}\To 
\C M\To \C M^{loc}\To 0$$
where $\C M^{loc}=(M^{0,loc}, M^{1,loc},\varphi _1)\in\MF ^e_S$ and 
$\bar M^{0,loc}:=M^{0,loc}\otimes _Sk$ can be  
naturally indentified with 
the maximal $k$-submodule in $\bar M^0$ such that 
$\varphi _0$ induces its nilpotent 
endomorphism.  
Any etale filtered module is a 
direct sum of finitely many copies of 
$\C S^{et}:=(Sm, St^em,\varphi _1)\in\MF ^{e,et}_S$, 
where $\varphi _1(t^em)=m$.

The full subcategory of multiplicative filtered modules 
$\MF ^{e,mult}_S$ in $\MF ^e_S$ 
consists of $(M^0,M^1,\varphi _1)$ such that  $M^1=M^0$. 
Any $\C M\in\MF ^e_S$ has a unique maximal 
multiplicative quotient 
$j^{mult}:\C M\To\C M^{mult}$. Introduce the morphism 
$\psi _0:\bar M^0\To \bar M^0$ as follows: if $m\in M^1$ and 
$\varphi _1(m)=n\in M^0$ then $\psi _0(n\otimes 1)=
m\otimes 1$. One can verify that $\psi _0$ is 
a well-defined $\sigma ^{-1}$-linear morphism of $k$-modules 
and $\bar M^{0,mult}:=M^{0,mult}
\otimes _Sk$ can be identified with 
the maximal $k$-submodule in $\bar M^0$ 
such that $\psi _0|_{\bar M^{0,mult}}$ is invertible. 
Note that $j^{mult}$ can be included into the following short exact sequence 
in the category $\MF ^e_S$, 
$$0\To \C M^u\To \C M\overset{j^{mult}}\To \C M^{mult}\To 0$$
where $\C M^u=(M^{0,u},M^{1,u},\varphi _1)\in\MF ^e_S$ and 
$\bar M^{0,u}:=M^{0,u}\otimes _Sk$ 
is the maximal $k$-submodule such that $\psi _0|_{\bar M^{0,u}}$ is nilpotent.  
Any multiplicative filtered 
module is a direct sum of finitely many copies of 
$\C S^{mult}:=(Sn, Sn, \varphi _1)\in\MF ^{e,mult}_S$, 
where $\varphi _1(n)=n$.

Introduce the category $\MF ^{e*}_S$ as follows. 
Its objects are the objects of 
$\MF ^e_S$ and for any $\C M_1,\C M_2\in\MF ^e_S$, 
$$\Hom _{\MF ^{e*}_S}(\C M_1,\C M_2)=
\Hom _{\MF ^e_S}(\C M_1,\C M_2)/R(\C M_1,\C M_2)$$
where $R(\M _1,\M _2)$ consists of the morphisms of $\MF ^e_S$ 
of the form 
$$\M _1\overset{j^{mult}}
\To \M_1 ^{mult}\overset{f}\To \M_2 ^{et}\overset{i^{et}}\To \M _2$$
with arbitrary $f\in\Hom _{\MF ^e_S}(\C M_1^{mult},\C M_2^{et})$. 

Note that $\Hom _{\MF ^e_S}(\C S^{mult},\C S^{et})$ has only one 
non-trivial morphism  and it is given by the correspondence 
$n\mapsto t^{2e}m$. 
\medskip 

\subsection{Main result} 

In this paper we prove the following theorem.

\begin{Thm} \label{T0.1}
There is an antiequivalence of categories 
$$\C F^{O*}_{O_0}:\Gr ^*_{O_0}\To\MF ^{e*}_S$$
\end{Thm}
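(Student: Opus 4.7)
The plan is to construct the antiequivalence in three stages, closely modelled on the author's previous treatment for $p>2$ but requiring essentially new input at $p=2$. First, work over the auxiliary ring $O=O_K$ in which the square root $\pi$ of $\pi_0$ is available, and build an antiequivalence $\C F^O_K:\Gr_O\To\MF^e_S$ directly. Then use the natural action of $\Gal(K/K_0)\simeq\Z/2$ on both categories to descend this functor to $\C F^O_{O_0}:\Gr_{O_0}\To\MF^e_S$. Finally, verify that under this descended functor the subspace $\C R(G_1,G_2)$ goes into $R(\C F^O G_2,\C F^O G_1)$, so that the antiequivalence passes to the $*$-quotients.

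For the construction of $\C F^O$, the starting point is the isomorphism $\kappa_{SO}:S/t^{2e}\To O/2O$. Given $G=\Spec A(G)\in\Gr_O$, the reduction $\bar A(G):=A(G)/2A(G)$ becomes an $S/t^{2e}$-algebra via $\kappa_{SO}$, and its augmentation ideal $I_G=\Ker e_G$ carries the Hopf algebra information. A free $S$-module $M^0$ of rank equal to the $O$-rank of $A(G)$ is taken as a lift of $\bar A(G)$ compatible with the augmentation; the submodule $M^1\supset t^eM^0$ encodes the relevant part of $I_G$; and $\varphi_1:M^1\To M^0$ is obtained from Verschiebung on $G$, equivalently from the vanishing of the relative Frobenius forced by $2\id_G=0$. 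The condition $\varphi_1(M^1)S=M^0$ reflects flatness of $A(G)$. Under this recipe $\Gr^{et}_O$ and $\Gr^{mult}_O$ correspond to $\MF^{e,et}_S$ and $\MF^{e,mult}_S$ respectively, and the single non-trivial element of $\Hom_{\Gr_R}((\Z/2)_R,\mu_{2,R})$ must correspond to the single non-trivial morphism $n\mapsto t^{2e}m$ in $\Hom_{\MF^e_S}(\C S^{mult},\C S^{et})$.

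Essential surjectivity and full faithfulness on the absolute (un-starred) categories can then be established by constructing a quasi-inverse that reconstructs $A(G)$ from $(M^0,M^1,\varphi_1)$: one assembles an $O_0$-algebra using $\kappa_{SO}$ and uses $\varphi_1$ to produce the comultiplication, with Galois descent gluing the pieces together. The main obstacle, and the reason for formally introducing the $*$-quotients in the first place, is the $p=2$ phenomenon already hinted at in Sections 0.2 and 0.3: the extra non-zero morphism $(\Z/2)_R\To\mu_{2,R}$ and its candidate counterpart $\C S^{mult}\To\C S^{et}$ do not respect the naive functor, so that no clean antiequivalence exists on the full categories. The ideals $\C R$ and $R$ are engineered precisely to quotient out this discrepancy, and the final verification is that on Hom-spaces the induced map is a bijection. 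This reduces to a careful analysis of extensions of etale by multiplicative objects in each category and of the ambiguity in lifting such extensions from $\bar A(G)$ to $A(G)$; this 2-adic bookkeeping is where the bulk of the technical work is concentrated.
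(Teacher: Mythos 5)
Your overall architecture (work over $O$, descend to $O_0$, pass to the $*$-quotients to kill the $(\Z/2)_O\To\mu_{2,O}$ ambiguity) matches the paper's, but the core of your proposal — the ``direct'' construction of $\C F^O:\Gr_O\To\MF^e_S$ by reducing $A(G)$ modulo $2$, lifting to a free $S$-module and reading off $\varphi_1$ from Verschiebung — is not a construction; it is a restatement of the hardest problem in the paper. The canonical object one can attach functorially to an augmented $O$-algebra $\c B=(B,I_B)$ is $\iota(\c B)=(I_B/J_B,\,I_B(2)/J_B,\,\varphi_1)$ with $\varphi_1$ induced by $b\mapsto -b^2/2$; this lives only in $\CMF_S$, is not free over $S$, and does not satisfy $\varphi_1(M^1)S=M^0$. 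Producing an actual object $\c M\in\MF^e_S$ together with a (special) $\varphi_1$-lift $\c M\To\iota(A(G))$ is precisely the essential-surjectivity statement, and there is no soft argument for it: the paper proves it by induction on $|G_0|$, writing $G$ as an extension of $H$ by $\mu_\eta$, describing the extension class by Kummer theory via an element $f\in\c H_{LT}(H_0,\mu_\eta)$, and then normalizing $f$ through the Main Lemma and its elaboration so that $l_{LT}(f)$ becomes, modulo the DP-ideal, an $O'$-linear combination of the generators $X_i$. The specifically $2$-adic difficulty you gesture at (``2-adic bookkeeping'') is concentrated exactly here: since $2O$ is already the maximal DP-ideal, one must track the third term of $l_{LT}$ and show that the quadratic forms arising from $(D_iX_i)^4/4$ and from $g^2$ cancel (Subsections 3.7--3.8). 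None of this is visible in your outline, and without it the functor you posit does not exist as described.

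A second, smaller point: you assert ``essential surjectivity and full faithfulness on the absolute (un-starred) categories'' before retracting it two sentences later. The paper is explicit that no such statement is available — the two elements of $\Hom_{\Gr_O}((\Z/2)_O,\mu_{2,O})$ coincide modulo $2O$, so morphisms cannot be controlled before passing to $\Gr^*_O$ and $\MF^{e*}_S$; the equivalence relation is built into the construction from the start (via the ideals $J_B$ and $\widetilde J_B$), not verified at the end. Finally, note that the paper constructs the functor in the opposite direction, $\c G_O:\MF^{e*}_S\To\Gr^*_O$, by writing down the algebra $O[\bar X]/\c I_A$ with relations $(-1/2)(\bar XC)^{(2)}=\bar X$ from a chosen basis of $M^1$; full faithfulness then follows from recovering $\iota_{\c M}(\c M)$ intrinsically inside the Hopf algebra (Proposition \ref{P1.13}), and descent to $O_0$ again needs the Lubin--Tate group law, not just tame/Galois descent. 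If you want to salvage your direction of construction, you must supply the analogue of the Main Lemma; as written, the proposal has a genuine gap there.
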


For $p>2$,  there is an antiequivalence of categories 
$\C F^O_{O_0}:\Gr _{O_0}\To \MF ^e_S$. This was proved 
by C.Breuil  \cite{Br} and M.Kisin \cite{Ki1,Ki2}  
in a more general context of all $p$-group schemes. 
The proofs are obtained from the study of $p$-divisible groups and 
essentially use the crystalline Dieudonne theory which is built 
on a geometrical approach due to the Raynaud theorem about 
the existence of embedding of any $p$-divisible group into an abelian scheme.  
This approach has been generalized recently by W.Kim \cite{Kim}, 
E.Lau \cite{Lau} and T.Liu \cite{Liu} to the case $p=2$. 
(Lau's result uses Zink's theory of displays and windows.)

On the other hand, an explicit 
and direct construction of the antiequivalence $\C F^O_{O_0}$ 
in the case $p>2$ 
was given by the author \cite{Ab}. The above theorem  
extends that construction to the case $p=2$. We should notice that 
this extension is very far from to be straightforward 
for the following reasons.  

First, when relating group schemes over $O_0\subset O$ and filtered 
$S$-modules we use the identification of rings $S/t^{2e}$ and $O/2$. 
But when working modulo $2$ we can't control quite efficiently all 
morphisms in the category $\Gr _O$, e.g.  both the elements 
of $\Hom _{\Gr _O}((\Z /2)_O, \mu _{2,O})$ coincide modulo $2O$. 
This explains why we are forced to use the quotient categories  
$\Gr ^*_O$, $\Gr ^*_{O_0}$ and $\MF ^{e*}_S$. On the other hand, 
the above example represents essentially the only aspect we are 
losing in our approach and Theorem \ref{T0.1} gives 
essentially complete information about the objects and morphisms 
of the category $\Gr _{O_0}$.  

Second, when working with odd prime numbers $p>2$ the ideal $pO$ 
is provided with nilpotent $DP$-structure and 
is considerably smaller than the maximal $DP$-ideal in $O$ if $e>1$. 
If $p=2$ we have no such ``safety margin'' because $2O$ is already 
the maximal $DP$-ideal in $O$. The adjustment of methods 
of \cite{Ab} to the case $p=2$ required a profound revision of all constructions 
used in there, especially the proof of the surjectivity 
of the functor $\c G_{O_0}^O$. In particular, 
so-called Main Lemma was restated in a more precise form and 
provided with an essential elaboration. As a result, 
all main features of our approach from \cite{Ab} 
were preserved in the case $p=2$.

Finally, notice that all applications developed in \cite{Ab}: 
a criterion for the Galois module to come from the Galois module of 
geometric points of $G\in\Gr _{O_0}$, the relation between 
group
 schemes from $\Gr _{O_0}$ and Faltings's strict modules in characteristic 2 
and an explicit description of the duality in $\Gr _{O_0}$ can be 
done along the lines of the approach from \cite{Ab} in the case $p=2$ as well. 
\medskip 

\subsection{Brief description of used methods} 

Let $\Aug _O$ be the category of augmented $O$-algebras. 
Introduce the equivalence relation $R$ such that for 
$f_1,f_2\in\Hom _{\Aug _O}(\c B_1,\c B_2)$, $f_1\underset{R}\sim f_2$ iff 
$f_1$ and $f_2$ coincide 
modulo some DP-ideal in $\c B_2$ (cf. Subsection \ref{S1.3} 
for the definition of this ideal). Denote by $\Aug ^*_O$ 
the category whose objects are the objects of $\Aug _O$ but the 
morphisms are the $R$-equivalence classes of morphisms in $\Aug _O$. 
In our approach the category $\Aug _O$ (resp, $\Aug ^*_O$) relates the categories 
$\MF _S^{e}$ and $\Gr _O$ (resp., $\MF _S^{e*}$ and $\Gr _O^*$). 

As first step we associate with $\c M=(M^0,M^1,\varphi _1)\in\MF ^e_S$ 
a set of augmented $O$-algebras $\Aug _O(\c M)$ each 
of whose members  is constructed 
after choosing an appropriate special basis for $M^0$ 
and a couple of other choices. One also defines 
for any $\c A\in\Aug _O$, a canonical object $\iota (\c A)\in\CMF _S$ and if 
$\c A\in\Aug _O(\c M)$ then we have a natural map 
$\iota _{\c M}:\c M\To \iota (\c A)$ in $\CMF _S$. Then 
one observes that the correspondences $\c A\mapsto\Aug _O(\c M)$ and 
$\c B\mapsto \iota (\c B)$ are left-adjoint. This results 
in the following property: if $\c N\in \MF ^e_S$ and $\c B\in\Aug _O(\c N)$ 
then we have natural identifications 
$$\Hom _{\MF ^{e*}_S}(\c M,\c N)=\Hom _{\CMF _S}
(\iota _{\c M}(\c M),\iota _{\c N}(\c N))
\subset \Hom _{\Aug ^*_O}(\c A,\c B). 
$$
This allows us to show that $\c A$, as an object of $\Aug ^*_O$, 
is functorial in $\c M$ (viewed as an object of $\MF ^{e*}$), and that 
the assignment $\c M\mapsto \c A$ is functorial. It also allows us 
to define a family 
$\c L_{\c A}$ of Hopf algebra structures on $\c A$ whose spectrums 
are group schemes over $O$ that are isomorphic in $\Gr ^*_O$. 
(Use the diagonal embedding of $\c M$ into $\c N=\c M\oplus\c M$.) 
In this way we obtain 
a faithful functor $\c G_O:\MF ^{e*}_S\To\Gr _O^*$. 

This functor is actually full. To see this, we describe $\iota _{\c M}(\c M)$ 
in terms of the Hopf algebra structure on $\c A$, and so we find 
that it is an object intrinsically attached to $\c G_O(\c M)$. 
So we obtain the following commutative diagram: 
$$
\xymatrix 
{&\Hom _{\Aug _O^*}(\c A,\c B)
\ar[rr]^{\simeq }_{\text{Prop. \ref{P1.7}}}
&&\Hom _{\CMF _S}(\iota _{\c M}(\c M),\iota (\c B))&
\\
&&&&&\\
&\Hom _{\Gr ^*_O}(\c G_O(\c N),\c G_O(\c M))\ar@{^{(}->}[uu]
\ar[rr]^{\simeq}_{\text{Prop. \ref{P1.13}}}
&&\Hom _{\CMF _S}(\iota _{\c M}(\c M),\iota _{\c N}(\c N))\ar@{^{(}->}[uu]&
\\
&&&&&\\
&{}&&\Hom _{\MF ^{e*}_S}(\c M,\c N)\ar[lluu]\ar[uu]^{\simeq}&}
$$ 
These ideas have been applied earlier 
by the author in \cite{Ab2, Ab3} to describe 
the category 
$\Gr ^*_O$, where $O$ is the valuation ring of a field extension 
of $\Q _p$ with small ramification. 
 
Next, one shows that $\c G_O$ factors through a functor $\c G_{O_0}^{O}:
\MF ^{e*}_S\To \Gr ^*_{O_0}$. This amounts to showing that every group scheme 
$\c G_O(\c M)$ descends  to one over $O_0$. To accomplish this one uses tame 
descent and induction on rank of $\c M$. This goes more or less along 
the lines of \cite{Ab}. 

All that remains to do is to show that $\c G_{O_0}^O$ is essentially surjective. 
One again uses tame descent and induction, this time on $|G_0|=p^s$. 
This is most difficult part of the paper, where we need essential elaboration of 
Main Lemma from \cite{Ab} and where a special role of prime number $p=2$ 
can be explained in the following way. When applying induction on 
$s$ we present $G=G_0\otimes _{O_0}O$ as an extension of a group scheme 
$H$ of order $p^{s-1}$ 
via a group scheme of order $p$. Then on the level of algebras 
$A(G)$ and $A(H)$ of the group schemes $G$ and $H$, 
the Kummer theory provides us with a class of generators $\theta $ such that 
$A(G)=A(H)[\theta ]$. Main Lemma allows us to make a very special choice of 
$\theta $ and then via the 
Lubin-Tate logarithm $l_{LT}(X)=X+X^2/p+X^{p^2}/p^2+\dots $, 
we can relate 
$G$ with a group scheme of the form $\c G_O(\c M)$, $\c M\in\MF ^e_S$. 
This special choice becomes much more delicate in the case $p=2$ because when $p>2$ 
it was enough to use only the first two terms of the above expansion of $l_{LT}$ but in 
the case $p=2$ we need to take into account one term more. 
\medskip

\section{Construction of the functor $\C G_O:\MF ^e_S\To \Gr _{O}$}
\label{S1}

We use all notation and assumptions from Introduction, in particular, 
the definition of the category $\MF ^e_S$ and 
the corresponding properties of its objects. 
Remind also that we fixed an identification 
$\kappa _{SO}:S/t^{2e}S\simeq O/2O$ such that 
$\kappa _{SO}|_k=\id $ and 
$\kappa _{SO}(t\operatorname{mod}t^{2e})
=\pi \operatorname{mod}2$.

\subsection{$\varphi _1$-lifts}
\label{S1.1}
Suppose 
$\C M=(M^0,M^1, \varphi _1)\in\MF ^e_S$, 
$\C N=(N^0,N^1, \varphi _1)\in\CMF _S$ and 
$\theta\in \Hom _{\CMF _S}(\C M, \C N)$.   

\ \ 

\begin{definition}  a) $\theta $ is a  
$\varphi _1${\it -lift} if $\theta (M^0)=N^0$, $\theta (M^1)=N^1$ and  
$\Ker\,\theta =\Ker\theta |_{M^1}:=T$;
b) $\varphi _1$-lift $\theta $ is {\it nilpotent} if $\varphi _1|_T$ 
is topologically nilpotent, i.e. $\bigcap _{n}\varphi _1^n(T)=0$;
c) $\varphi _1$-lift $\theta $ is {\it special} 
if there is a submodule $T'$ of $T$ such that 
$T=T'+(T\cap t^{2e}M^{0,et})$ and  
$\varphi _1$ induces a topologically nilpotent endomorphism of $T'$. 
\end{definition}

\begin{Prop}\label{P1.1}
 Suppose for $i=1,2$, $\theta _i\in\Hom _{\CMF _S}
(\c M_i,\c N_i)$ are $\varphi _1$-lifts and $h\in\Hom _{\CMF _S}(\c N_1,\c N_2)$. 
Then the set $\c L(h)$ of all $f\in\Hom _{\MF ^e_S}(\c M_1,\c M_2)$ such that 
$\theta _2\circ f=h\circ\theta _1$ is not empty. If in addition:

{\rm a)} $\theta _2$ is nilpotent then $\c L(h)$ consists only of one element;
\medskip 

{\rm b)} $\theta _2$ is special then all elements of $\c L(h)$ belong to 
$R(\c M_1,\c M_2)$, i.e. coincide in $\MF ^{e*}_S$. 
\end{Prop}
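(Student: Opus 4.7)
All three claims rest on the observation that $\varphi_1(T_2)\subset T_2$ (immediate from $\theta_2\varphi_1=\varphi_1\theta_2$ and $\theta_2|_{T_2}=0$), combined with the generation property $\varphi_1(M_1^1)S=M_1^0$ of objects in $\MF^e_S$. Existence of $f\in\c L(h)$ comes from lifting and a $T_2$-valued correction; parts (a) and (b) follow from iterating $\varphi_1$ on the image of the difference of two lifts.

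\textbf{Existence.} Because $M_1^0$ is $S$-free and $\theta_2:M_2^0\to N_2^0$ is surjective, choose an $S$-linear lift $f:M_1^0\to M_2^0$ of $h\circ\theta_1$; since $\Ker\theta_2=T_2\subset M_2^1$, one has $f(M_1^1)\subset M_2^1$ automatically. The failure of $\varphi_1$-commutativity is captured by the $\sigma$-linear defect $\delta(m):=f(\varphi_1 m)-\varphi_1(f(m))\in T_2$ on $M_1^1$. To correct, I would pick $\mu_1,\dots,\mu_r\in M_1^1$ with $\{\varphi_1(\mu_j)\}$ an $S$-basis of $M_1^0$ (possible by $\varphi_1(M_1^1)S=M_1^0$), write $\mu_j=\sum_k a_{jk}\varphi_1(\mu_k)$, and seek an $S$-linear correction $\epsilon:M_1^0\to T_2$ whose values $\eta_j:=\epsilon(\varphi_1(\mu_j))\in T_2$ solve the fixed-point system $\eta_j=\sum_k a_{jk}^2\varphi_1(\eta_k)-\delta(\mu_j)$. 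The system is solved by successive $t$-adic approximation, exploiting $t$-adic completeness of $M_2^0$ and the structure imposed by the $\varphi_1$-lift condition on $\theta_2$.

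\textbf{Parts (a) and (b): iteration on the difference.} For $f,f'\in\c L(h)$ set $d:=f-f'$; then $d\in\Hom_{\MF^e_S}(\c M_1,\c M_2)$ and $d(M_1^0)\subset T_2$. Induction yields $d(M_1^0)\subset S\varphi_1^n(T_2)$ for every $n$: assuming this for $n$, $d(M_1^1)\subset M_2^1\cap S\varphi_1^n(T_2)=S\varphi_1^n(T_2)$ (since $T_2\subset M_2^1$); hence by $\sigma$-semilinearity $d(\varphi_1(M_1^1))=\varphi_1(d(M_1^1))\subset S\varphi_1^{n+1}(T_2)$, and $\varphi_1(M_1^1)S=M_1^0$ combined with $S$-linearity of $d$ gives the claim for $n+1$. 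In case (a), topological nilpotence of $\varphi_1|_{T_2}$ on the finitely generated $S$-module $T_2$ forces the induced endomorphism of $T_2/tT_2$ to be nilpotent, so $\varphi_1^n(T_2)\subset t^{N_n}T_2$ with $N_n\to\infty$; thus $\bigcap_n S\varphi_1^n(T_2)=0$ by $t$-adic Hausdorffness, whence $d=0$. In case (b), decompose $T_2=T'+(T_2\cap t^{2e}M_2^{0,et})$; both summands are $\varphi_1$-stable (the second because $\varphi_1(t^{2e}m)=t^{2e}\sigma(m)\in t^{2e}M_2^{0,et}$), the $T'$-contribution vanishes as in (a), and one is left with $d(M_1^0)\subset t^{2e}M_2^{0,et}$. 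Viewed as a subobject of $\c M_2^{et}$, the filtered module $t^{2e}M_2^{0,et}$ satisfies $M^0=M^1$ and is therefore \emph{multiplicative}; hence the induced map $\c M_1\to t^{2e}M_2^{0,et}$ factors through the universal multiplicative quotient $\c M_1\to\c M_1^{mult}$, and postcomposing with $t^{2e}M_2^{0,et}\hookrightarrow\c M_2^{et}\hookrightarrow\c M_2$ exhibits $d\in R(\c M_1,\c M_2)$.

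\textbf{Main obstacle.} The existence step is the most delicate: one has to verify that the fixed-point iteration for the correction $\epsilon$ converges $t$-adically, which depends on a careful interplay between the choice of basis $\{\varphi_1(\mu_j)\}$, the operator $(a_{jk}^2)\circ\varphi_1$, and the completeness of $M_2^0$. In part (b), the structural observation that $t^{2e}M_2^{0,et}$ is internally multiplicative---despite sitting inside an etale object---is the key insight that triggers the factorization through $\c M_1^{mult}$ via the universal property.
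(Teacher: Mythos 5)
Your overall strategy coincides with the paper's: both reduce existence to solving a $\sigma$-linear fixed-point equation with values in $T_2=\Ker\,\theta_2$ (your system $\eta_j=\sum_k a_{jk}^2\varphi_1(\eta_k)-\delta(\mu_j)$ is exactly the paper's $\bar t-\varphi_1(\bar tC)=-\bar t_2$), and both treat (a) and (b) by driving the difference of two solutions into $\bigcap_n\varphi_1^n(T_2)$, resp.\ into the multiplicative subobject $(t^{2e}M_2^{0,et},t^{2e}M_2^{0,et},\varphi_1)$ of $\c M_2^{et}$, which then forces factorization through $\c M_1^{mult}$ and membership in $R(\c M_1,\c M_2)$. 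Parts (a) and (b) are essentially right (your reduction of (b) is more direct than the paper's factorization of $\theta_2$ through a nilpotent lift, but reaches the same conclusion; the sub-claim that topological nilpotence of $\varphi_1|_{T_2}$ forces nilpotence on $T_2/tT_2$ is true but deserves a line of justification).

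The genuine gap is in the existence step. You propose to solve $(\id-\Phi)\eta=-\delta$, with $\Phi(\eta)_j=\sum_k a_{jk}^2\varphi_1(\eta_k)$, ``by successive $t$-adic approximation.'' Since $\Phi$ is $\sigma$-linear it satisfies $\Phi(t^nL)\subset t^{2n}L$, so the iteration does converge once the defect lies in $tL$; but the iteration does \emph{not} converge in general, and the real obstruction sits in degree zero: one must first solve $(\id-\bar\Phi)\bar\eta=-\bar\delta$ on the finite-dimensional $k$-vector space $L/tL$, where $\bar\Phi$ need not be nilpotent (take $T_2$ containing a copy of $\c S^{mult}$ with $\varphi_1(n)=n$: then $\id-\bar\Phi$ has the form $x\mapsto x-x^2$, whose surjectivity is an Artin--Schreier/Lang statement, false over $\F_2$). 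This is precisely what the paper isolates as Lemma \ref{L1.1d}: for any $\sigma$-linear $\varphi$ on a finitely generated $S$-module, $\id-\varphi$ is surjective, proved by splitting $L/tL$ into bijective and nilpotent parts and using that the equations $x_i^{p}A=x_i$ cut out an \'etale algebra over the \emph{algebraically closed} field $k$. Your appeal to ``$t$-adic completeness and the structure imposed by the $\varphi_1$-lift condition'' does not supply this; the hypothesis on $k$ is essential and your argument nowhere uses it. Once Lemma \ref{L1.1d} is in place, your existence argument (and the uniqueness in (a), which is the injectivity half of the same lemma for topologically nilpotent $\Phi$) goes through.
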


\begin{proof} We start with the following Lemma. 
 
\begin{Lem} \label{L1.1d} Suppose $L$ is a finitely generated  
$S$-module and $\varphi $ is a 
$\sigma $-linear operator on $L$. Then the operator $\id _L-\varphi $ is epimorphic. 
If, in addition, $\varphi $ is topologically nilpotent then $\id _L-\c A$ is bijective. 
\end{Lem} 

\begin{proof} [Proof of Lemma] Part b) is obvious. 
In order to prove a) notice first that we can 
replace $L$ by $L/tL$ and, therefore, assume that 
$L$ is a finite dimensional vector space over $k$.  
Then there is a decomposition $L=L_1\oplus L_2$, where $\varphi $ 
is invertible on $L_1$ and nilpotent on $L_2$. 
It remains to note that $L_1=L_{0}\otimes _{\F _p}k$, where 
$L_0$ is a finite dimensional $\F _p$-vector space such that $\varphi |_{L_0}=\id $. 
The existence of $L_0$ is a standard fact of $\sigma $-linear algebra: 
if $s=\dim _kL_1$ and $A\in M_s(k)$ is a matrix of $\varphi |_{L_1}$ 
in some $k$-basis of $L_1$ then  
$L_0=\{(x_1,\dots ,x_s)\in k^s\ |\ 
(x_1^p,\dots ,x_s^p)A=(x_1,\dots ,x_s)\}$; the $\F _p$-linear space 
$L_0$  has dimension $s$ because 
the corresponding equations determine an etale algebra of rank $p^s$ 
over algebraically closed field $k$. The Lemma is proved. 
 \end{proof}

Now suppose for $i=1,2$, $\c M_i=(M_i^0,M_i^1,\varphi _1)$ and 
$\c N_i=(N_i^0,N_i^1,\varphi _1)$. Let a vector $\bar m_1\in (M_1^0)^s$ and a 
matrix $C\in M_s(S)$ be such that the coordinates of $\bar m_1$ and $\bar m_1C$
form an $S$-basis of $M_1^0$ and, resp., $M_1^1$, and 
$\varphi _1(\bar m_1C)=\bar m_1$. 

If $\bar n_2=(h\circ \theta _1)\bar m_1$ then $\varphi _1(\bar n_2C)=\bar n_2$. 
Choose a vector $\bar m_2\in (M_1^0)^s$ such that $\theta _2(\bar m_2)=\bar n_2$. 
Then $\bar m_2-\varphi _1(\bar m_2C)=\bar t_2\in T_2^s$, where $T_2=\Ker\,\theta _2$. 
The elements of $\c L(h)$ correspond to vectors $\bar t\in T_2^s$ such that 
$\bar m_2+\bar t=\varphi _1((\bar m_2+\bar t)C)$ or equivalently 
$\bar t-\varphi _1(\bar tC)=-\bar t_2$. 

By above Lemma such $\bar t$ 
always exists and, therefore, $\c L(h)\ne\emptyset $. 
If $\theta _2$ is nilpotent then such $\bar t$ is unique and part a) is proved.

Now suppose $\theta _2$ is special. Then $T_2=T_2'+(T_2\cap t^{2e}M_2^{0,et})$ and 
$\varphi _1|_{T_2'}$ is topologically nilpotent. Clearly, we can assume that 
$T_2\cap t^{2e+1}M_2^{0,et}\subset T_2'$. 

Note that $\c N_2=(M_2^0/T_2, M_2^1/T_2,\varphi _1)$ and $\theta _2$ 
appears as the composition of two natural projections in $\CMF _S$ 
$$\c M_2\overset{\alpha }\To \c N_2':=(M_2^0/T_2', M_2^1/T_2',\varphi _1)
\overset{\beta }\To \c N_2.$$
Here $\alpha $ is a nilpotent $\varphi _1$-lift and, therefore,  $f\in\c L(h)$ is 
uniquely determined by $\alpha\circ f\in\Hom _{\CMF_S}(\c M_1,\c N_2')$. 

Suppose for $i=1,2$, $f_i\in\c L(h)$ and $f_i'=\alpha\circ f_i$. 
Then $f_1'-f_2'\in\Ker\,\beta _*\subset \Hom _{\CMF _S}(\c M_1,\wt{\c N}_2)$, where 
$\wt{\c N}_2=(\wt{N}_2,\wt{N}_2,\varphi _1)$, $\wt{N}_2=t^{2e}M_2^{0,et}/t^{2e+1}$ and 
$\varphi _1$ is $\sigma $-linear automorphism of $\wt{N}_2$. 

Let $\wt{\c M}_2:=(t^{2e}M_2^{0,et}, t^{2e}M_2^{0,et},\varphi _1)$. 
Clearly it is a subobject of $\c M_2^{et}$ in $\MF ^e_S$. 
Then the natural projection $\wt{\c M}_2\To\wt{\c N}_2$ is 
a nilpotent $\varphi _1$-lift and, therefore, $f_1-f_2$ factors through the embedding 
$\wt{\c M}_2\subset \c M_2^{et}$. Finally, $\wt{\c M}_2$ is a multiplicative object in 
$\MF ^e_S$ and this implies that $f_1-f_2$ factors through the natural projection 
$\c M_1\To\c M_1^{mult}$. 

The proposition is proved. 
\end{proof}

\begin{Cor} \label{C1.2} 
a) If $\theta $  is nilpotent 
then $\C M$ is defined uniquely by $\C N$ up to 
a unique isomorphism in the category $\MF ^e_S$;

b) If $\theta $ is special then 
$\C M$ is defined uniquely by $\C N$ up to 
a unique isomorphism in the category $\MF ^{e*}_S$.
\end{Cor}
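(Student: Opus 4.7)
The plan is to deduce Corollary 1.2 directly from Proposition 1.1 by applying that proposition with $h=\id _{\c N}$ in both directions between two candidate realizations. Suppose $\theta :\c M\To\c N$ and $\theta ':\c M'\To\c N$ are two $\varphi _1$-lifts of the same type (both nilpotent in (a), both special in (b)). Applying Proposition \ref{P1.1} to the pair $(\theta ,\theta ')$ with $h=\id _{\c N}$ produces some $f\in\Hom _{\MF ^e_S}(\c M,\c M')$ satisfying $\theta '\circ f=\theta $; applying it to the pair $(\theta ',\theta )$ with $h=\id _{\c N}$ produces some $g\in\Hom _{\MF ^e_S}(\c M',\c M)$ satisfying $\theta \circ g=\theta '$.

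For part (a), both $\theta $ and $\theta '$ are nilpotent $\varphi _1$-lifts, so part a) of Proposition \ref{P1.1} asserts that the sets $\c L(\id _{\c N})$ involved are singletons; in particular $f$ and $g$ are uniquely determined. The composition satisfies
$$\theta '\circ (f\circ g)=\theta \circ g=\theta '=\theta '\circ\id _{\c M'},$$
so both $f\circ g$ and $\id _{\c M'}$ belong to the set $\c L(\id _{\c N})$ associated with $(\theta ',\theta ')$. By the uniqueness clause of Proposition \ref{P1.1}(a) we conclude $f\circ g=\id _{\c M'}$; symmetrically $g\circ f=\id _{\c M}$. Thus $f$ is an isomorphism in $\MF ^e_S$, and uniqueness of this isomorphism is again Proposition \ref{P1.1}(a) applied with $\theta =\theta '$.

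For part (b), I would run exactly the same argument but inside the quotient category $\MF ^{e*}_S$: Proposition \ref{P1.1}(b) still produces representatives $f$ and $g$ of morphisms in $\MF ^{e*}_S$, and the two elements $f\circ g$ and $\id _{\c M'}$ of $\c L(\id _{\c N})$ now coincide modulo $R(\c M',\c M')$, i.e., in $\MF ^{e*}_S$; similarly $g\circ f=\id _{\c M}$ in $\MF ^{e*}_S$. Hence $f$ and $g$ descend to mutually inverse isomorphisms in $\MF ^{e*}_S$, and uniqueness of the isomorphism class of this morphism is once more immediate from Proposition \ref{P1.1}(b).

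There is no substantive obstacle here: the corollary is a purely formal two-way application of Proposition \ref{P1.1}, with the only subtle point being the need to interpret all equalities in part (b) inside the quotient category $\MF ^{e*}_S$, which is exactly what Proposition \ref{P1.1}(b) delivers.
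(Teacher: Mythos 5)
Your proposal is correct and is exactly the formal two-way application of Proposition \ref{P1.1} that the paper intends (the paper states the corollary without a written proof). The only cosmetic point is that in part (b) the existence of $f$ and $g$ comes from the ``$\c L(h)\ne\emptyset$'' clause of Proposition \ref{P1.1} rather than from its part b), which supplies only the uniqueness modulo $R$; the argument itself is fine.
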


\subsection{Extension of scalars} \label{S1.2}

Let $S'=S[t']$, where $t'^2=t$. If $\c M=(M^0, M^1,\varphi _1)\in\MF ^e_S$ 
then $\C M\otimes _SS':=(M^0\otimes _SS', M^1\otimes _SS', 
\varphi _1\otimes\sigma )\in\MF ^{2e}_{S'}$. (Here $\sigma (s)=s^2$ for any $s\in S'$.) 

If $\C M'=(M'^0, M'^1,\varphi _1')\in\MF ^{2e}_{S'}$ then 
$\varphi _1'(M'^1)=\{\varphi _1'(m)\ |\ m\in M'^0\}$ has a natural structure 
of $S$-module and coincides with $M^0$ if $\C M'=\C M\otimes _SS'$. 
This fact implies easily the following criterion of the existence 
of a descent of $\c M'$ to $S$. 

\begin{Prop} \label{P1.3}
 If $\c M'=(M'^0, M'^1,\varphi _1')\in\MF ^{2e}_{S'}$ then 
the following two properties are equivalent: 
\medskip 

{\rm a)}\ $\C M=(M^0, M^1,\varphi _1)\in\MF ^e_S$ is such that 
$\c M'=\c M\otimes _SS'$;
\medskip 

{\rm b)}\ if $M^0=\varphi _1'(M'^1)$, $M^1=M'^1\cap M^0$ and 
$\varphi _1=\varphi _1'|_{M^1}$ then 
\linebreak 
$\c M=(M^0,M^1,\varphi _1)\in\MF ^e_S$.  
\end{Prop}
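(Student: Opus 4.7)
The plan hinges on the observation made just before the statement: for any $\c M'\in\MF^{2e}_{S'}$, the image $\varphi_1'(M'^1)\subset M'^0$ is canonically an $S$-submodule (since $\varphi_1'$ is $\sigma$-semilinear and $\sigma(S')=S$, using $\sigma(t')=t'^2=t$), and moreover $\varphi_1'(M'^1)=M^0$ whenever $\c M'=\c M\otimes_SS'$. This gives the whole procedure in (b) its meaning.

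For the implication (a)$\Rightarrow$(b): assuming $\c M'=\c M\otimes_SS'$ with $\c M\in\MF^e_S$, the cited observation immediately gives $\varphi_1'(M'^1)=M^0$. The $S$-decomposition $S'=S\oplus St'$ yields $M'^0=M^0\oplus t'M^0$ and $M'^1=M^1\oplus t'M^1$, so $M'^1\cap M^0=M^1$ by comparing $S$-gradings, and $\varphi_1'|_{M^1}=\varphi_1$ by construction of the base change. The recipe of (b) therefore reproduces the original triple $\c M$, which by assumption is an object of $\MF^e_S$.

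For the implication (b)$\Rightarrow$(a): take $\c M=(M^0,M^1,\varphi_1)$ as constructed in (b); by hypothesis $\c M\in\MF^e_S$. The inclusion $M^0\hookrightarrow M'^0$ extends $S'$-linearly to $\alpha\colon M^0\otimes_SS'\to M'^0$, and the inclusion $M^1\hookrightarrow M'^1$ makes $\alpha$ into a morphism $\c M\otimes_SS'\to\c M'$ in $\CMF_{S'}$. I would verify $\alpha$ is an isomorphism. At the level of $M^0$, the axiom $\varphi_1'(M'^1)S'=M'^0$ of $\MF^{2e}_{S'}$ combined with $\varphi_1'(M'^1)=M^0$ yields $M'^0=M^0\cdot S'=M^0+t'M^0$, so $\alpha$ is surjective; comparing $S'$-ranks of the free modules at source and target then forces equality of ranks, so $\alpha$ is an $S'$-linear isomorphism. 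For the filtration piece, identify $M^0\otimes_SS'$ with $M'^0$ via $\alpha$; one has $M^1\otimes_SS'\subseteq M'^1$ trivially, and for the reverse inclusion, writing $m'\in M'^1$ in the $S$-decomposition $M'^0=M^0\oplus t'M^0$, the $\sigma$-semilinearity of $\varphi_1'$ together with the generation axiom $\varphi_1(M^1)S=M^0$ forces both coefficients of $m'$ to lie in $M'^1\cap M^0=M^1$.

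The main obstacle I anticipate is the last step: showing that the filtration $M'^1$ is exactly the $S'$-span of $M^1=M'^1\cap M^0$. The level-zero surjectivity and rank comparison are formal from the category axioms, but passing from $M^1\subset M'^1$ to the reverse inclusion requires coupling the $\sigma$-semilinearity of $\varphi_1'$ with the generation axiom of $\MF^e_S$, making essential use of the fact that one controls the whole of $M^0$ via $\varphi_1(M^1)$. Everything else --- compatibility of $\varphi_1$ with $\alpha$, the filtration identification $(M^0\otimes_SS')^1=M^1\otimes_SS'$ up to $t'^{2e}(M^0\otimes_SS')=t^e(M^0\otimes_SS')$ --- is a routine consequence of the base-change definition.
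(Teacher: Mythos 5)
Your strategy is the natural one and agrees with what the paper has in mind (the paper gives no written proof; it only records the remark that $\varphi_1'(M'^1)$ is canonically an $S$-module via $\sigma(S')=S$ and equals $M^0$ after base change, and declares that the criterion follows ``easily''). The direction (a)$\Rightarrow$(b) is complete. In (b)$\Rightarrow$(a), however, the two steps that carry all the content are asserted rather than proved, and one of them rests on a false inference: surjectivity of $\alpha\colon M^0\otimes_SS'\To M'^0$ gives only $\operatorname{rank}_SM^0\geqslant\operatorname{rank}_{S'}M'^0=:n$, not equality. A priori $M^0=\varphi_1'(M'^1)$ is just some $S$-submodule of $M'^0$, which has $S$-rank $2n$, so nothing yet excludes $\operatorname{rank}_SM^0>n$ (a rank-$2n$ submodule such as $M'^0$ itself also generates $M'^0$ over $S'$, and for it $\alpha$ would be surjective with nonzero kernel). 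The missing observation is the reverse bound: if $f_1,\dots,f_n$ is an $S'$-basis of $M'^1$, then $\sigma$-linearity gives $\varphi_1'(M'^1)=\sum_iS\,\varphi_1'(f_i)$, so $M^0$ is generated by $n$ elements over $S$ and hence has $S$-rank at most $n$. Only with this in hand does your rank comparison prove injectivity of $\alpha$; it also shows that the $\varphi_1'(f_i)$ form an $S$-basis of $M^0$.

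The filtration step $M'^1\subseteq M^1S'$ is the crux, and ``semilinearity together with the generation axiom forces both coefficients to lie in $M^1$'' is a restatement of the desired conclusion, not an argument: applying $\varphi_1'$ to $m'=x+t'y$ and to $t'm'$ returns only $\varphi_1'(m')$ and $t\varphi_1'(m')$, which does not separate $x$ from $y$. A correct argument is global rather than coefficientwise. Since $t^eM^0\subseteq M^1\subseteq M^0$, the module $M^1$ is free of rank $n$ over $S$; pick an $S$-basis $h_1,\dots,h_n$. The hypothesis $\varphi_1(M^1)S=M^0$ says that the $n$ elements $\varphi_1'(h_i)$ generate the free rank-$n$ module $M^0$, hence form an $S$-basis. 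Write $h_i=\sum_ja'_{ij}f_j$ with $a'_{ij}\in S'$; then $\varphi_1'(h_i)=\sum_j\sigma(a'_{ij})\varphi_1'(f_j)$, and since both $(\varphi_1'(h_i))_i$ and $(\varphi_1'(f_j))_j$ are $S$-bases of $M^0$, the matrix $(\sigma(a'_{ij}))$ lies in $\GL_n(S)$. Its determinant is $\sigma(\det(a'_{ij}))=\det(a'_{ij})^2$, so $\det(a'_{ij})\in S'^*$ and $h_1,\dots,h_n$ is an $S'$-basis of $M'^1$. Therefore $M'^1=\sum_jS'h_j=M^1S'=M^1\oplus t'M^1$, which is exactly the inclusion you were missing. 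With these two repairs the proof is complete.
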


\medskip

\subsection{The category of augmented $O$-algebras $\Aug _O$} 
\label{S1.3}

Let $\Aug _O$ 
be the category of augmented $O$-algebras 
$\C B=(B, I_B)$ such that  
$B$ is a flat $O$-algebra of finite rank and 
$I_B$ is an ideal of $B$ such that $O\simeq B/I_B$ 
via the natural map $o\mapsto o\cdot 1_B$, $o\in O$. 

We shall denote by $\C B^{et}$ the subobject 
$(B^{et}, I_{B^{et}})\in\Aug _O$ such that $B^{et}$ is the maximal 
etale subalgebra of $B$ and $I_{B^{et}}=B^{et}\cap I_B$. 

With above notation let $I_B^{loc}$ be the ideal of 
topologically nilpotent elements of $I_B$. Clearly, $I_B^{loc}\cap I_{B^{et}}=
\pi I_{B^{et}}$. Denote by $I_B(2)$ the ideal of 
all $b\in I_B$ such that $b^2\in 2I_B$ 
and by $I_B(2)^{loc}$ ---  
the ideal of all $b\in I_B$ such that 
$b^2\in  2I_B^{loc}$. Clearly, $I_B(2)=I_B(2)^{loc}+\pi ^eI_{B^{et}}$. 

Let $J_B=I_B(2)^2+\pi ^eI_B(2)$. Then $J_B=\widetilde{J}_B+2I_{B^{et}}$, where 
$\widetilde{J}_B=I_B(2)^{loc}I_B(2)+\pi ^eI_B(2)^{loc}$. 
Notice that $J_B$ is provided with the standard 
$DP$-structure by the map 
$b\mapsto -b^2/2$, $b\in J_B$, and $\widetilde{J}_B$ is the maximal 
ideal in $J_B$ where this $DP$-structure  
is topologically nilpotent. 

We shall denote by $\Aug ^*_O$ the following category. 
Its objects are the objects of the category $\Aug _O$ and 
for any $\C B_1,\C B_2\in\Aug _O$, 
$$\Hom _{\Aug ^*_O}(\C B_1,\C B_2)=\Hom _{\Aug _O}(\C B_1,\C B_2)/R,$$
where $R$ is the following equivalence relation:
\medskip  

{\it if $f_1,f_2\in \Hom _{\Aug _O}(\C B_1,\C B_2)$ then 
$f_1\underset{R}\sim f_2$ iff $f_1\equiv f_2\operatorname{mod}J_{B_2}$.}
\medskip 

\subsection{Families of augmented $O$-algebras 
$\Aug _O(\C M)$, $\C M\in\MF ^e_S$} \label{S1.4} 

Suppose $\C M=(M^0,M^1,\varphi _1)\in\MF ^e_S$ and  
the vector $\bar m^1=(m^1_1,\dots ,m^1_u)$ is such that 
its coordinates form an 
$S$-basis of $M^1$. One can verify that $\bar m^1$ 
can be chosen in such way that the following 
two conditions $\mathop{C1}$ and $\mathop{C2}$ are satisfied: 

\ \ 

$\mathop{C1}$: {\it the non-zero images of all   
$m^1_i$, $1\le i\le u$, in $M^0/tM^0$ are linearly 
independent over $k$};

\ \ 

$\mathop{C2}$: {\it $\bar m^{1}=(\bar m^{1,loc},t^e\bar m^{et})$ 
where 
\newline 
a) the coordinates of $\bar m^{et}$ form an $S$-basis 
of $M^{0,et}$ and  $\varphi _1(t^e\bar m^{et})=\bar m^{et}$;
\newline 
b) if $\bar m^{loc}=\varphi _1(\bar m^{1,loc})$ then the coordinates of 
$\bar m^{loc}\operatorname{mod}t$ form a basis of $\bar M^{0,loc}=M^{0,loc}\otimes _Sk$ 
over $k$.} 

\ \ 
 
Let $\bar m^0=(\bar m^{loc}, \bar m^{et})$. Then the coordinates of $\bar m^0$ form 
an $S$-basis of $M^0$. Denote by $U$ the $(n\times n)$-matrix with coefficients in $S$ 
such that $\bar m^1=\bar m^0U$. By condition $\mathop{C1}$,   
for appropriate $S$-matrices 
$U_1$ and $U_2$, we have 
$\bar m^{1,loc}=\bar m^{loc}U_1+\bar m^{et}(tU_2)$. 

The above chosen data: the vectors $\bar m^0, \bar m^1$ and 
the matrix $U\in M_u(S)$ 
--- completely 
describe the structure of $\C M\in\MF ^e_S$. 
Choose 
$C\in M_u(O)$ such that  $C\operatorname{mod}2=
U\operatorname{mod}t^{2e}$ with respect to 
the identification $\kappa _{SO}$. Define the  
$O$-algebra  
$A=O[\bar X]/\C I_A$, where 
$\bar X=(X_1,\dots ,X_u)$, 
$\C I_A=\C I_{A,K}\cap O[\bar X]$ and   
$\C I_{A,K}$ is the ideal in $K[\bar X]$ generated 
by the coordinates of the vector 
$(-1/2)(\bar XC)^{(2)}-\bar X$. 
(For any matrix $\alpha =(\alpha _{ij})$ we set 
$\alpha ^{(2)}:=(\alpha _{ij}^2)$.)

\begin{Prop} \label{P1.4} 
With above notation $A$ is a flat algebra of rank 
$2^n$ over $O$. 
\end{Prop}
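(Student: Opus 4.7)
The plan is to verify that $A$ is $O$-flat of rank $2^n$ by establishing three properties: (i) $A$ is $O$-torsion-free; (ii) $\dim_K(A\otimes_OK)=2^n$; (iii) $A$ is finitely generated as an $O$-module. Since $O$ is a DVR, these together force $A$ to be free of rank $2^n$.

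Property (i) is immediate from the definition $\C I_A=\C I_{A,K}\cap O[\bar X]$: the natural map $A\hookrightarrow A\otimes_OK=K[\bar X]/\C I_{A,K}$ is injective, so $A$ has no $O$-torsion and hence is $O$-flat.

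For (ii), write $f_i=X_i+\tfrac{1}{2}(\bar XC)_i^2$, so that $\C I_{A,K}=(f_1,\ldots,f_n)$. Each $f_i$ is quadratic with Jacobian $(\partial f_i/\partial X_j)|_{\bar X=0}=I_n$, so the origin is an isolated smooth zero of the system $f_1=\cdots=f_n=0$. Passing to the projective closure, the intersection with the hyperplane at infinity is cut out by the vanishing of the top-degree components $\tfrac{1}{2}(\bar XC)_i^2$, i.e.\ by $\bar XC=0$. The structural constraints on $U$ coming from $\C M\in\MF_S^e$ together with the block-triangular form $U=\bigl(\begin{smallmatrix}U_1&0\\tU_2&t^eI\end{smallmatrix}\bigr)$ forced by conditions $\mathop{C1},\mathop{C2}$ and $M^1\supset t^eM^0$ ensure $\det C\ne 0$ in $K$ for the lift $C$ in question, and Bezout's theorem then yields $\dim_K(A\otimes_OK)=\prod_i\deg f_i=2^n$.

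For (iii), iterating the relation $2X_i+(\bar XC)_i^2=0$ in $A$ gives
\[
  2^k\,X_{j_1}X_{j_2}\cdots X_{j_k}=(-1)^k(\bar XC)_{j_1}^2(\bar XC)_{j_2}^2\cdots(\bar XC)_{j_k}^2
\]
for every tuple $(j_1,\ldots,j_k)$ of indices. Combining this with the $K$-dimension bound from (ii), one checks that any monomial $\bar X^{\bar a}$ with $\max_i a_i\ge 2$ reduces modulo $\C I_A$ to an $O$-linear combination of the $2^n$ monomials $\bar X^{\bar i}=X_1^{i_1}\cdots X_n^{i_n}$ with $\bar i\in\{0,1\}^n$. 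These $2^n$ monomials consequently span $A$ over $O$ and, being $K$-linearly independent in $A\otimes_OK$, form an $O$-basis.

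The principal obstacle is (iii), i.e.\ converting the mere $K$-dimension count into an integral $O$-module basis. The subtlety is that $\C I_A$ is the $K$-saturation in $O[\bar X]$ of the ideal $(2f_1,\ldots,2f_n)$ rather than that ideal itself, so one must carefully control the $\pi$-adic denominators that arise when reducing monomials of high degree, using the displayed iterated identity as the main book-keeping tool.
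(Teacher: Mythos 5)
Your steps (i) and (ii) are fine: torsion-freeness is immediate from the definition of $\C I_A$ as a saturation, and the Bezout count does give $\dim_K(A\otimes_OK)=2^u$ once one knows $\det C\neq 0$. But the whole content of the Proposition sits in your step (iii), and there is a genuine gap there: the one fact that makes the integral reduction work is that, as a consequence of condition $\mathop{C1}$, the matrix $C^{(2)}$ \emph{divides} $2I_u$ in $M_u(O)$, i.e.\ $2\,(C^{(2)})^{-1}\in M_u(O)$. This is the paper's key step (deduced as in Lemma 2.2.2 of \cite{Ab}); it lets one rewrite the defining relations as $\bar X^{(2)}=-2(\bar X+\bar V)(C^{(2)})^{-1}$ with coefficients in $O$, monic in the $X_i^2$, so the square-free monomials visibly form an $O$-basis and the resulting quotient, being $O$-free, already equals the saturation. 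You only extract from the structure of $U$ the much weaker statement $\det C\neq 0$, which controls the generic fibre but says nothing about denominators.

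Without that divisibility the statement is simply false, so no amount of book-keeping with your displayed identity can close the gap: take $u=1$, $e=2$, and $c\in O$ with $v(c^2)>v(2)$ (e.g.\ $c=\pi^3$). Then $\det C=c\neq 0$, $\dim_K(A\otimes_OK)=2$, $A$ is torsion-free, yet the relation reads $X^2=-(2/c^2)X$ with $2/c^2\notin O$, and the saturated quotient $O[X]/\C I_A$ is the $O$-subalgebra of $K\times K$ generated by $(0,-2/c^2)$, which is not finitely generated over $O$. Your iterated identity $2^kX_{j_1}\cdots X_{j_k}=(-1)^k\prod_l(\bar XC)_{j_l}^{2}$ also goes in the wrong direction for the reduction: it expresses $2^k$ times a degree-$k$ monomial as an integral polynomial of degree $2k$, whereas what is needed is a degree-\emph{lowering} substitution for $X_i^2$ with coefficients in $O$. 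So you must first prove the divisibility $C^{(2)}\mid 2I_u$ from $\mathop{C1}$ (using the block shape of $U$ and the identification $C\equiv U\bmod 2$); once that is in place, steps (i)--(iii) collapse to the paper's one-line argument and the Bezout detour in (ii) becomes unnecessary.
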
 

\begin{proof} 
Indeed, it can be deduced from condition $\mathop{C1}$ 
(similarly to Lemma 2.2.2 from 
\cite{Ab}) that 
$C^{(2)}$ divides the scalar matrix 
$2I_u$ in $M_u(O)$. (Note that $C$ divides $\pi ^eI_u$.)  
This implies that the ideal 
$\C I_A$ is generated by the coordinates of the vector  
$\bar X^{(2)}-2(\bar X+\bar V){C^{(2)}}^{-1}$,  
where $\bar V$ consists of $O$-linear combinations 
of $X_iX_j$, $1\leqslant i<j\leqslant u$. Therefore, 
there is an isomorphism of $O$-modules 
\begin{equation} \label{AE1}
A\simeq \oplus _{0\leqslant i_1,\dots, i_u\le 1}OX_1^{i_1}
\dots X_{u}^{i_u}
\end{equation} 
and $A$ is flat over $O$. 
\end{proof}

For the above introduced algebra  $A$, denote by $I_A$ the 
ideal in $A$ generated by 
the images of $X_1,\dots ,X_u$. Then $(A,I_A)\in\Aug _O$. 

\begin{definition}  Denote 
by $\Aug _O(\C M)$ the family of all augmented algebras $(A,I_A)\in\Aug _O$ 
obtained via the above procedure 
for all choices of $\bar m^1$ (which satisfy the conditions $\mathop{C1}$ 
and $\mathop{C2}$) and the corresponding matrix $C\in M_u(O)$.
\end{definition} 

\subsection{The $\varphi _1$-lift $\iota _{\C M}$}  
\label{S1.5}

Define the functor  $\iota :\Aug _O\To \CMF  _S$ via 
$$(B, I_B)\mapsto (I_B/J_B, I_B(2)/J_B,\varphi _1)$$
where $(B,I_B)\in\Aug _O$, $J_B$ was introduced in 
Subsection \ref{S1.3} and $\varphi _1$ is 
induced by the correspondences $b\mapsto -b^2/2$, $b\in I_B$.

For any $\C M\in\MF ^e_S$ and $(A,I_A)\in\Aug _O(\C M)$, 
there is a canonical morphism 
$\iota _{\C M}:\C M\To \iota ({A(\C M)})$  
in $\CMF_S$  such that 
$\bar m^0\mapsto \bar X\operatorname{mod} J_A$ and 
$\bar m^1\mapsto \bar X C\operatorname{mod}J _A$. Clearly, the image 
$\iota _{\c M}(\c M)$ is a subobject of 
$\iota (\c A(\c M))$ in the category $\CMF _S$. 

\begin{Prop} \label{P1.5} The map $\iota _{\C M}:\c M\To \iota _{\c M}(\c M)$ 
is a special $\varphi _1$-lift.
\end{Prop}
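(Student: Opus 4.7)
The proposition has three assertions built into the definitions of ``$\varphi_1$-lift'' and ``special'': surjectivity of $\iota_{\mathcal M}$ onto each of $M^0$ and $M^1$ components of the image, the containment $\ker\iota_{\mathcal M} \subset M^1$, and the existence of a submodule $T' \subset T := \ker\iota_{\mathcal M}$ with $T = T' + (T \cap t^{2e}M^{0,et})$ and $\varphi_1|_{T'}$ topologically nilpotent. Surjectivity onto each piece is immediate from the definition of $\iota_{\mathcal M}(\mathcal M)$ as the image. I would first verify $\iota_{\mathcal M}$ is a well-defined morphism in $\CMF_S$: the $S$-linearity comes from $\kappa_{SO}: S/t^{2e}S \simeq O/2O$, and compatibility with $\varphi_1$ uses the defining relation $(\bar XC)^{(2)} = -2\bar X$ of $A$, giving $\varphi_1(\bar XC \bmod J_A) = -(\bar XC)^{(2)}/2 = \bar X$, matching $\varphi_1(\bar m^1) = \bar m^0$ (encoded in the special basis via condition C2). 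That $\bar XC$ lands in $I_A(2)/J_A$ follows from $(\bar XC)^{(2)} \in 2I_A$.

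The heart of the proof is computing $T$. My target description is $T = t^eM^1 + t^{2e}M^{0,et}$, proved by two inclusions. The inclusion $\supset$ follows from $\pi^e\bar XC \in \pi^eI_A(2) \subset J_A$ (translating under $\iota_{\mathcal M}$ to $t^eM^1 \subset T$) together with $2X_i^{et} \in 2I_{A^{et}} \subset J_A$ (yielding $t^{2e}M^{0,et} \subset T$). The reverse inclusion $\subset$ uses the freeness of $A$ over $O$ from Proposition \ref{P1.4}, combined with the decomposition $J_B = \widetilde J_B + 2I_{B^{et}}$, to show that the $O$-module $I_A/J_A$ decomposes into a ``local'' piece with annihilator exactly $\pi^e$ (generated by the columns of $C$ read through condition C1) and an ``etale'' piece with annihilator exactly $2 \sim \pi^{2e}$, and no other relations intrude on the $O$-span of $X_1,\dots,X_u$.

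From $T = t^eM^1 + t^{2e}M^{0,et}$, the containment $T \subset M^1$ is automatic: $t^eM^1 \subset M^1$, and $t^{2e}M^{0,et} \subset t^eM^0 \subset M^1$ by the axioms of $\MF^e_S$. For the special decomposition, $T \cap t^{2e}M^{0,et} = t^{2e}M^{0,et}$ (since the latter already sits in $T$), so I need $T'$ with $T = T' + t^{2e}M^{0,et}$ and $\varphi_1|_{T'}$ topologically nilpotent. The key point is that $T' = t^eM^1$ itself fails because it contains $t^{2e}M^{0,et}$ on which $\varphi_1$ is invertible; instead one must choose $T'$ to ``avoid'' the etale direction—concretely, take the preimage in $t^eM^1$ of the projection $M^0 \twoheadrightarrow M^{0,loc}$, or equivalently lift a basis of the quotient $T/(T \cap t^{2e}M^{0,et})$ to a $\varphi_1$-stable submodule $T'$ whose reductions sit in $M^{0,loc}$ modulo $t^{2e}M^{0,et}$. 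Then $\varphi_1(t^ex) = t^{2e}\varphi_1(x)$ together with the topological nilpotence of $\varphi_0|_{M^{0,loc}}$ (which holds by the very definition of $M^{0,loc}$) forces $\bigcap_n \varphi_1^n(T') = 0$ via the $\sigma$-semilinear growth of $t$-powers.

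The main obstacle is the explicit identification of $T$ in step two: one has to track carefully how the off-diagonal block $tU_2$ in $\bar m^{1,loc} = \bar m^{loc}U_1 + \bar m^{et}(tU_2)$ mixes local and etale contributions inside $J_A$, and verify that the higher-order ideal $I_A(2)^2$ does not impose relations beyond the expected ones on the linear span of $\bar X$ modulo $J_A$. The second delicate point, specific to $p = 2$, is choosing $T'$ to cleanly separate the $\varphi_1$-nilpotent local direction from the $\varphi_1$-invertible etale direction—this is precisely the feature that the author singles out in the introduction as requiring elaboration beyond \cite{Ab}.
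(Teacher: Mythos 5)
Your architecture is coherent and your target formula for the kernel is consistent with the rank-one cases (note $t^{2e}M^{0,et}\subset t^{e}M^{1}$, so your formula collapses to $T=t^{e}M^{1}$), but the two load-bearing steps are exactly the ones you defer, and in both the mechanism you invoke is not the one that makes the proof work. For the inclusion $T\subset t^{e}M^{1}$ you yourself flag it as ``the main obstacle'' and do not carry it out; it is also strictly stronger than what is needed. The paper only proves $T\subset M^{1}$ (i.e.\ $\widetilde T^{0}=\widetilde T^{1}$ after first reducing mod $t^{2e}$), and does so not via a monomial decomposition of $J_A$ --- the monomial property (Proposition \ref{PA1}) is established only later and only for the special presentation of Subsection \ref{S1.9.1}, not for the general matrix $C$ of Subsection \ref{S1.4} --- but by noting that $\sum o_iX_i\in J_A\subset I_A(2)$ forces $\sum o_i^2X_i^2\in 2I_A$ and then following linear terms through $\bar X^{(2)}=2(\bar X+\bar V)(C^{(2)})^{-1}$ to get $(o_1,\dots ,o_u)\equiv (\alpha _1',\dots ,\alpha _u')C\operatorname{mod}\pi ^e$. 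Your structural claim that $I_A/J_A$ splits into a piece killed by exactly $\pi^e$ and one killed by exactly $2$ is false for the linear span of $\bar X$: for $\C M=(Sm,St^rm,\varphi_1)$ with $0<r<e$ one computes $J_A\cap OX=\pi^{e+r}OX$.

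For the nilpotence, ``$\sigma$-semilinear growth of $t$-powers'' is not the right reason: for $m\in M^0$ one has $\varphi_1(t^{2e}sm)=t^{2e}s^2\varphi_0(m)$, so on $t^{2e}M^0$ the $t$-power stabilizes at $2e$ and topological nilpotence is precisely the topological nilpotence of $\varphi_0$ --- which fails on $M^{0,et}$ and, on the rest, requires you to actually produce a $\varphi_1$-stable $T'$ complementary to $t^{2e}M^{0,et}$. Condition C2 does not hand you one: $\varphi_1(t^em_i^{1,loc})=t^{2e}m_i^{loc}$ generally has a component along $t^{2e}M^{0,et}$, so $t^eM^{1,loc}$ is not stable and a stable lift of $\bar M^{0,loc}$ must be constructed. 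The paper sidesteps both issues simultaneously: it factors $\iota_{\C M}$ through $\C M\otimes_SS/t^{2e}$ (whose kernel already absorbs the $t^{2e}M^{0,et}$ ambiguity via the ``special'' clause) and then proves topological nilpotence of $\varphi_1$ on the residual kernel $\widetilde T^0$ by an algebra-level iteration of the divided power $v\mapsto -v^2/2$, using the subalgebra $A'=O[Y_1,\dots ,Y_u]$ and the linear-term operator $\C L$ (Lemma \ref{L1.6}) to conclude from $v_{N_0}\in 2I_A$ that $\varphi_1^{N_0+1}(\tilde v)\in\pi^{2e}\widetilde M^0=0$. As written, your proposal has a genuine gap at both points; you would need either to supply an argument of this kind or to honestly prove the exact identification of $T$ together with the existence of a stable complement.
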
  

\begin{proof} Consider $\widetilde{\C M}=\C M\otimes _SS/t^{2e}\in\CMF _S$. 
Clearly, the natural projection $\C M\To \widetilde{\C M}$ is 
a special $\varphi _1$-lift and $\iota _{\C M}$ is 
the composition of this projection 
and a unique  $\tilde\iota _{\C M}\in\Hom _{\CMF _S}
(\widetilde{\C M}, \iota _{\C M}(\C M))$. 

Let $\widetilde{\C M}=(\widetilde{M}^0,\widetilde{M}^1, \varphi _1)$. Then 
$$\widetilde{M}^0=\left\{ \sum o_i\widetilde{X}_i\ |\ o_1,\dots ,o_n\in O, 
\widetilde{X}_i=X_i\operatorname{mod}2I_A\right\}$$
$$\widetilde{M}^1=\left\{ \sum o_i\widetilde{Y}_i\ |\ o_1,\dots ,o_n\in O, 
(\widetilde{Y}_1,\dots ,\widetilde{Y}_n)=
(\widetilde{X}_1,\dots ,\widetilde{X}_n)C\right\}$$
where the $S$-module structure is induced by the given $O$-module structure 
via the identification $\kappa _{SO}$ and 
$\varphi _1:\widetilde{M}^1\To \widetilde{M}^0$ is given via the correspondence 
$\sum o_i\widetilde{Y}_i\mapsto\sum o_i^2\widetilde{X}_i$. 

Suppose $\tilde\iota _{\C M}=(\tilde\iota ^0_{\C M}, \tilde\iota ^1_{\C M})$. 
Then 
$$\widetilde{T}^0:=\Ker \,\tilde\iota ^0_{\C M}=
\left\{\sum o_i\widetilde{X}_i\ |\ \sum o_iX_i\in J_A\right\}$$
$$\widetilde{T}^1:=\Ker \,\tilde\iota ^1_{\C M}=
\left\{\sum o_i\widetilde{Y}_i\ |\ \sum o_iY_i\in J_A\right\}$$

The proposition will be proved if we show $\widetilde{T}^0=\widetilde{T}^1$, 
$\varphi _1(\widetilde{T}^0)\subset\widetilde{T}^0$ and 
$\varphi _1|_{\widetilde{T}^0}$ 
is nilpotent. 

Suppose $\tilde v=\sum o_i\widetilde{X}_i\in\widetilde{T}^0$. 
Then $\sum o_iX_i\in J_A\subset I_A(2)$ and $\sum _io_i^2X_i^2\in 2I_A$. 
Let $(G_1',\dots ,G_u')=2(\bar X+\bar V)(C^{(2)})^{-1}$, cf. the proof of 
Proposition \ref{P1.4}. Then $\sum _io_i^2G_i'\in 2I_A$ and due to the isomorphism 
(\ref{AE1}) we can follow the linear terms to obtain that 
\begin{equation} \label{AE2} 
 2(o_1^2,\dots ,o_u^2)(C^{(2)})^{-1}:=2(\alpha _1,\dots ,\alpha _n)\in 2O^u
\end{equation}

Clearly, there are $\alpha _1',\dots ,\alpha _u'\in O$ such that 
all $\alpha _i'^2\equiv\alpha _i\operatorname{mod}2O$ and (\ref{AE2}) implies that 
$$(o_1,\dots ,o_u)\equiv (\alpha _1',\dots ,\alpha _u')C\operatorname{mod}\pi ^e.$$
Therefore, $\sum _io_i\widetilde{X}_i$ is congruent modulo 
$\pi ^e\widetilde{M}^0$ to an $O$-linear combination of the coordinates of the vector 
$(\widetilde{Y}_1,\dots ,\widetilde{Y}_u)=(\widetilde{X}_1,\dots ,\widetilde{X}_u)C$. 
In other words, $\tilde v\in \widetilde{M}^1$, i.e. $\widetilde{T}^0=
\widetilde{T}^1$.

If $\tilde v=\sum o_i'\widetilde{Y}_i$, then 
$$\varphi _1(\widetilde{v})=\sum o_i^{\prime 2}\widetilde{X}_i=
-\widetilde{v}^2/2+\sum _{i,j}o'_io'_j\widetilde{Y}_i\widetilde{Y}_j\in 
J_A\operatorname{mod}2I_A$$
implies $\varphi _1(\widetilde{T}^0)\subset \widetilde{T}^0$. 
(Use that $J_A$ is a $DP$-ideal and $I_A(2)^2\subset J_A$.) 

Finally, let $\tilde v_0=\tilde v$ and for $n\geqslant 0$, 
$\tilde v_{n+1}=\varphi _1(\tilde v_n)$. 
We must prove that for $n\gg 0$, $\tilde v_n=0$. 

Let $A'=O[Y_1,\dots ,Y_n]$. Then $A'$ is an $O$-subalgebra 
in $A$ given by the equations 
$$(Y_1^2,\dots ,Y_n^2)+(Y_1,\dots ,Y_n)(2C^{-1})=0.$$
Therefore, any element $a\in A'$ can be uniquely presented in the form 
$$a=\sum _{1\leqslant i_1<\dots <i_s\leqslant u}
o_{i_1\dots i_s}(a)Y_{i_1}\dots Y_{i_s},$$
where all $o_{i_1\dots i_s}(a)\in O$. Set 
$\C L(a)=o_1(a)Y_1+\dots +o_u(a)Y_u$. 
Notice that if $I_{A'}$ is the augmentation ideal of $A'$ 
generated by $Y_1,\dots ,Y_u$ and $a\in I_{A'}^2$ then all 
$o_i(a)\equiv 0\operatorname{mod}\pi ^e$ 
(use that $2C^{-1}\equiv 0\operatorname{mod} \pi ^e$). 
This means that if $a_1,a_2\in I_{A'}$ and $a_1\equiv a_2\operatorname{mod}I_{A'}^2$ 
then $\c L(a_1)\equiv\c L(a_2)\operatorname{mod}\pi ^eI_{A'}$. 

With above notation let $v_0=\sum o_iX_i$ and for all 
$n\geqslant 0$, $v_{n+1}=-v_n^2/2$. Clearly, all $v_n\in J_A$ 
and there is an $N_0\geqslant 0$ such that $v_{N_0}\in 2I_A$. 

For $n\geqslant 0$, set $v_n^*=\C L(v_n)$ and denote by 
$\rho :A'\To A/2$ the composition of the natural inclusion 
$A'$ into $A$ and the reduction map $A\To A/2$. 

\begin{Lem} 
\label{L1.6} 
$\rho (v_n^*)\equiv 
\tilde v_n\operatorname{mod}(\pi ^e\widetilde{M}^1)$.
\end{Lem}

\begin{proof}[Proof of Lemma] Use induction on $n\geqslant 0$. 

Clearly, $v_0=v_0^*$ and $\rho (v_0)=\tilde v_0$. 

Suppose $\rho (v_n^*)\equiv \tilde v_n
\operatorname{mod}(\pi ^e\widetilde{M}^1)$.

If $v_n^*=\sum o_i^{(n)}Y_i$, where all $o_i^{(n)}\in O$, then 
$v_n=\sum o_i^{(n)}Y_i+\alpha $ with $\alpha\in I_{A'}^2$. Therefore, 
$$v_{n+1}\equiv -v_n^2/2\equiv \sum o_i^{(n)2}X_i\operatorname{mod}I_{A'}^2$$  
$$v^*_{n+1}\equiv \sum o_i^{(n)2}X_i\operatorname{mod}\pi ^eI_{A'}$$
and $\rho (v_{n+1}^*)\equiv\varphi _1(\rho (v_n^*))
\equiv\varphi _1(\tilde v_n)\operatorname{mod}(\pi ^e\widetilde{M}^1)$. 
The lemma is proved.
\end{proof} 

Finally, the above lemma implies that 
$\tilde v_{N_0}\in\pi ^e\widetilde{M}^1$ and, 
therefore, $\tilde v_{N_0+1}=\varphi _1(\tilde v_{N_0})
\in\pi ^{2e}\widetilde{M}^0=0$. 
The proposition is proved.

\end{proof}

\subsection{The maps $\Theta ^* $ and $\Psi ^*$} \label{S1.6} 

Suppose $\C B=(B,I_B)\in\Aug _O$ and $\c B^{et}=(B^{et}, I_{B^{et}})$ is 
the maximal etale subalgebra in $\C B$. Then $J_{B^{et}}=2I_{B^{et}}$ and  
$\iota (\C B^{et})=
(I_{B^{et}}/2I_{B^{et}}, \pi ^eI_{B^{et}}/2I_{B^{et}},
\varphi _1)\in\CMF _S$ admits a (unique) special 
$\varphi _1$-lift $\C E (\C B^{et})\in\MF ^{e,et}_S$. 

Introduce $\m (\C B^{et})=
(2I_{B^{et}}/2\pi I_{B^{et}}, 2I_{B^{et}}/2\pi I_{B^{et}},
\varphi _1)\in\CMF _S$, where $\varphi _1$ is 
induced (as usually) by the map $a\mapsto -a^2/2$, $a\in 2I_{B^{et}}$. 
Clearly, $\m (\C B^{et})$ admits 
a (unique) nilpotent $\varphi _1$-lift $M(\C B^{et})\in\MF ^{e,mult}_S$ 
and the identity morphism on $I_{B^{et}}$ induces the natural morphism 
$$\omega (\C B^{et}):M(\c B^{et})\To \C E(\c B^{et})$$ 
in the category $\MF ^e_S$. 

Suppose $\C M=(M^0,M^1,\varphi _1)
\in\MF ^e_S$ and $\C A=(A,I_A)\in\Aug _O(\C M)$. 
Introduce  the map 
$$\Theta :\Hom _{\Aug _O}(\C A,\C B)\To \Hom _{\CMF _S}(\C M,\iota (\C B))$$
by attaching to $F\in \Hom _{\Aug _O}(\C A,\C B)$ the morphism of 
filtered modules $\Theta (F)=\iota (F)\circ \iota _{\C M}$.

\begin{Prop} 
\label{P1.7} 
With the above notation:
\newline  
{\rm a)} $\Theta $ is surjective;
\newline     
{\rm b)} if either $\C B^{et}=\C S^{et}$ or $\C M^{mult}=0$ then 
$\theta $ is bijective;
\newline 
{\rm c)} there is a natural strict action of the group 
$\Hom _{\MF ^e_S}(\C M^{mult}, \C E(\C B^{et}))$ on 
$\Hom _{\Aug _O}(\C A,\C B)$ and   
the corresponding equivalence relation $R$ coincides 
with the equivalence relation 
from the definition of $\Aug ^*_O$ in Subsection {\rm \ref{S1.3}};
\newline 
{\rm d)}  $\Theta $ induces the  bijection  
$$\Theta ^{*}:\Hom _{\Aug ^*_O}(\C A,\C B)
\To\Hom _{\CMF _S}(\iota _{\c M}(\c M), \iota (\C B)).$$
\end{Prop}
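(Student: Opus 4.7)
The plan is to establish (a) and (c) by direct construction from the presentation $\mathcal{A} = O[\bar{X}]/\mathcal{I}_A$, with defining relations $\bar{X} + (1/2)(\bar{X}C)^{(2)} = 0$, and then deduce (b) and (d) formally. The essential input is Proposition \ref{P1.5}: the canonical arrow $\iota_{\mathcal{M}} : \mathcal{M} \to \iota(\mathcal{A})$, sending $m^0_i \mapsto X_i \bmod J_A$ and $m^1_i \mapsto (\bar{X}C)_i \bmod J_A$, is a special $\varphi_1$-lift.

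For (a), given $g \in \Hom_{\CMF_S}(\mathcal{M}, \iota(\mathcal{B}))$, I would first pick any lift $\tilde{X} \in I_B^u$ of $g(\bar{m}^0)$. Using $C \equiv U \bmod t^{2e}$ under $\kappa_{SO}$ and the compatibility of $g$ with the filtration, the product $\tilde{X}C$ automatically lifts $g(\bar{m}^1)$ modulo $J_B$, and $\varphi_1$-compatibility of $g$ forces each defect $R_i(\tilde{X}) := \tilde{X}_i + (\tilde{X}C)_i^2/2$ to lie in $J_B$. The rule $X_i \mapsto \tilde{X}_i$ extends to a morphism $F : \mathcal{A} \to \mathcal{B}$ exactly when all $R_i(\tilde{X}) = 0$, so I would clear the defects by successive approximation: replacing $\tilde{X}$ by $\tilde{X} - R(\tilde{X})$ produces a new defect that is bilinear in $\tilde{X}C \in I_B(2)$ and the correction $R(\tilde{X}) \in J_B$, hence lies in a strictly smaller layer of the $J_B$-filtration. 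Because $B$ is finite over $O$ and $J_B$ is nilpotent, the iteration stabilises at some $\tilde{X}$ with $R(\tilde{X}) = 0$, giving $F$ with $\Theta(F) = g$.

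For (c), an element $\alpha \in \Hom_{\MF^e_S}(\mathcal{M}^{mult}, \mathcal{E}(\mathcal{B}^{et}))$ produces a perturbation $\delta_\alpha$ of $F$ by the chain: project each $m^0_i$ to $M^{0,mult}$, apply $\alpha$, and transport through the special $\varphi_1$-lift $\mathcal{E}(\mathcal{B}^{et}) \to \iota(\mathcal{B}^{et})$ and the identification $\kappa_{SO}: S/t^{2e} \simeq O/2$. Because every nontrivial morphism $\mathcal{S}^{mult} \to \mathcal{S}^{et}$ has the form $n \mapsto t^{2e} m$ and $\kappa_{SO}$ sends $t^{2e}$ to $2$, the image of $\delta_\alpha$ lands in $2 I_{B^{et}} \subset J_B$. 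Setting $F_\alpha(X_i) := F(X_i) + \delta_\alpha(m^0_i)$, one checks that $F_\alpha$ still kills $\mathcal{I}_A$: expanding $((F_\alpha X)C)_i^2 + 2 F_\alpha(X_i)$ produces quadratic cross terms in $\delta_\alpha$ which lie in $4B$ and cancel against each other once one uses $\pi^{2e} = 2u$ and the $\varphi_1$-intertwining property of $\alpha$. Strictness of the action comes from injectivity of $\kappa_{SO}$. To identify orbits with the $R$-equivalence of $\Aug_O^*$, one direction is visible from the construction; the converse is that any difference $F_1 - F_2 \in J_B$ decomposes uniquely through $\mathcal{M}^{mult}$ into $\mathcal{E}(\mathcal{B}^{et})$, reflecting that the ``mod-$J_B$'' freedom is exactly captured by the pairing between the maximal multiplicative quotient of $\mathcal{M}$ and the maximal etale subobject of $\mathcal{B}$.

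Parts (b) and (d) then follow formally. In (b), each hypothesis forces the action group in (c) to vanish, so by freeness and (a) the map $\Theta$ is a bijection. In (d), the identity $\Theta(F) = \iota(F) \circ \iota_\mathcal{M}$ shows $\Theta$ automatically factors through $\Hom_{\CMF_S}(\iota_\mathcal{M}(\mathcal{M}), \iota(\mathcal{B}))$, and passing to the quotient by the $\Aug_O^*$-equivalence exhibits $\Theta^*$ as the induced bijection. The main obstacle is part (c): in characteristic $2$ the ideal $J_B$ is already the maximal $DP$-ideal of $B$ so there is no ``safety margin'' (as emphasised in the Introduction), and the cancellation of the extra quadratic contributions introduced by $\delta_\alpha$ has to be tracked simultaneously, relying crucially on $\pi^{2e} = 2u$ and on the explicit form $n \mapsto t^{2e} m$ of the nontrivial morphism $\mathcal{S}^{mult} \to \mathcal{S}^{et}$.
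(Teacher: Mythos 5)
Your overall architecture matches the paper's: both Hom-sets are identified with solution sets of the congruence $(-1/2)(\bar bC)^{(2)}\equiv\bar b$, taken modulo $\wt J_B$ for $\Hom_{\Aug_O}(\C A,\C B)$ and modulo $J_B$ for $\Hom_{\CMF_S}(\iota_{\C M}(\C M),\iota(\C B))$, and the fibres of $\Theta$ are realised as perturbations landing in $2I_{B^{et}}\subset J_B$, matched with $\Hom_{\MF^e_S}(\C M^{mult},\C E(\C B^{et}))$. Your formal deductions of (b) and (d) from (a) and (c) are fine.

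The gap is in your proof of (a). After replacing $\tilde X$ by $\tilde X+R(\tilde X)$ the new defect equals $-(\tilde XC)(RC)-(1/2)(RC)^{(2)}$; you account for the bilinear cross term but not for the quadratic self-term $(1/2)(RC)^{(2)}$. On $J_B/\wt J_B\simeq(2I_{B^{et}}/2\pi I_{B^{et}})^u$ the map $\bar x\mapsto(-1/2)(\bar xC)^{(2)}$ is a $\sigma$-linear \emph{bijection} (the \'etale block of $C$ is invertible and $b\mapsto -b^2/2$ is bijective on $2I_{B^{et}}/2\pi I_{B^{et}}$), so the defect does not move into a strictly smaller layer: the iteration stalls exactly on the \'etale component. (Also $J_B$ is not nilpotent; it contains $2I_{B^{et}}$, and $I_{B^{et}}$ is idempotent.) What is actually needed is that $\id-\tilde\varphi$ be \emph{surjective} on the finite-dimensional $k$-space $(J_B/\wt J_B)^u$ for an arbitrary, not necessarily nilpotent, $\sigma$-linear $\tilde\varphi$; this is the paper's Lemma \ref{L1.1d}, an Artin--Schreier/Lang-type statement relying on $k$ being algebraically closed, which is precisely how Lemma \ref{L1.8}a) produces a solution modulo $\wt J_B$. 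Your contraction argument is valid only inside $\wt J_B$, where the $DP$-structure is topologically nilpotent --- that is the step which upgrades a solution modulo $\wt J_B$ to an exact algebra morphism. Note that the nontrivial kernel of $\id-\tilde\varphi$ on $J_B/\wt J_B$ is exactly what produces the group action in (c), which is a further indication that the linear-algebra input here must be surjectivity of $\id-\tilde\varphi$ rather than any form of contraction.
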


\begin{proof} Suppose $\C A=(A,I_A)\in\Aug _O(\C M)$ 
is given via a special choice of 
vectors $\bar m^0$ and $\bar m^1$ 
with the coordinates in $M^0$ and, resp., $M^1$, and the matrix 
$C\in M_u(O)$ from Subsection \ref{S1.4}. 

\begin{Lem} 
\label{L1.8} 
Suppose $\bar b^0\in I_B^u$ 
is such that 
$(-1/2)(\bar b^0C)^{(2)}\equiv \bar b^0\operatorname{mod}J_B$.  
Let $\C L(\bar b^0)$ be the set of all $\bar b\in I_B^u$ such that 
$\bar b\equiv\bar b^0\operatorname{mod}\widetilde{J}_B$ and it holds 
$(-1/2)(\bar bC)^{(2)}\equiv\bar b\operatorname{mod}\widetilde{J}_B$. 
Then 

{\rm a)} $\C L(\bar b^0)\ne\emptyset $;

{\rm b)} if $\bar b_1,\bar b_2\in\C L(\bar b^0)$ then 
$\bar x=\bar b_1-\bar b_2\in J_B^u$ and  
$
(-1/2)(\bar xC)^{(2)}\equiv\bar x\operatorname{mod}\widetilde{J}_B
$.
\end{Lem}

\begin{proof}[Proof of Lemma] 
The vector $\bar b=\bar b^0+\bar x\in\C L(\bar b^0)$ iff 
$\bar x\in J_B^u$ 
and 
$$(-1/2)(\bar xC)^{(2)}-\bar x\equiv\bar b^0+(1/2)(\bar b^0C)^{(2)}
\operatorname{mod}\widetilde{J}_B.$$
Notice that $\C V=(J_B/\widetilde{J}_B)^n$ has a natural structure 
of a finite dimensional vector space over $k$ and the correspondence 
$\bar x\mapsto (-1/2)(\bar xC)^{(2)}$ induces a $\sigma $-linear 
morphism $\tilde\varphi :\C V\To\C V$. By Lemma \ref{L1.1d}, 
$\tilde\varphi -\id :\C V\To\C V$ 
is surjective. This proves part a). 
Part b) follows easily from the congruence 
$(\bar b_1C)^{(2)}\equiv (\bar b_2C)^{(2)}+(\bar xC)^{(2)}
\operatorname{mod}2J_BI_B(2)$. 
\end{proof}

Notice that 
$\Hom _{\CMF _S}(\iota _{\c M}(\C M), \iota (\C B))=$
$$\{\bar b^0\operatorname{mod}J_B
\ |\ \bar b^0\in I_B^u, (-1/2)(\bar b^0C)^{(2)}
\equiv \bar b^0\operatorname{mod}J_B\},$$
$$\Hom _{\Aug _O}(\C A,\C B)=\{\bar b\in 
I_B^u\ |\ (-1/2)(\bar bC)^{(2)}=\bar b\}$$
$$=\{\bar b\operatorname{mod}\widetilde{J}_B\ |\ \bar b\in I_B^u, 
(-1/2)(\bar bC)^{(2)}\equiv \bar b\operatorname{mod}\widetilde{J}_B\}$$
and the map $\Theta $ is given via $\bar b\operatorname{mod}\widetilde{J}_B
\mapsto \bar b^0\operatorname{mod}J_B$. 

Therefore, part a) of Proposition \ref{P1.7} follows from part a) 
of above Lemma. If $\bar x$ is the vector from part b) 
of above Lemma then the correspondence $\bar m\mapsto \bar x$ identifies 
$$\Hom _{\MF ^e_S}(\C M, M(\C B^{et}))=
\Hom _{\CMF _S}(\C M^{mult}, M(\C B^{et}))$$ 
with $\Hom _{\CMF _S}(\C M^{mult}, \m (\C B^{et}))$. This implies  
part b) of Proposition \ref{P1.7}. With the above notation the correspondence 
$\bar b\mapsto \bar b+\bar x$ determines the action 
of $\Hom _{\MF ^e_S}(\C M^{mult},\C E(\C B^{et}))$ 
on $\Hom _{\Aug _O}(\C A, \C B)$. One can easily verify 
that this action is strict and $\Theta $ induces  
bijection of the corresponding quotient 
$\Hom _{\Aug _O}(\C A,\C B)/R$ and 
$\Hom _{\CMF _S}(\iota _{\c M}(\C M), \iota (\C B))$. 
\end{proof}

\begin{remark} 
 a) By condition $C2$, $\bar m^0=(\bar m^{loc}, \bar m^{et})$ 
and therefore we have the appropriate presentation 
$\bar x=(\bar x^{loc}, \bar x^{et})$, where $\bar x$ is the vector from 
part b) of Lemma \ref{L1.8}. One can easily see that $\bar x^{et}=\bar 0$. 
In particular, the shifts by all above vectors $\bar x$ determine 
a strict action of 
$\Hom _{\CMF _S}(\C M^{mult}, \iota (\C B))$ on 
$I_B^u\operatorname{mod}\widetilde{J}_B$. 
\newline 
b) One can easily see that 
 if $\C M\in\MF ^{e,mult}_S$ and $\C B=\c B^{et}$ then 
$$\Hom _{\Aug _O}(\C A,\C B)=\Hom _{\CMF _S}(\C M,E(\C B^{et}));$$
\end{remark}

\begin{Cor} \label{C1.9} 
If $\C B\in\Aug _O(\C N)$ with $\C N\in\MF ^e_S$ 
then the above identification $\Theta ^*$ induces a functorial 
in both arguments embedding 
$$\Psi ^*:\Hom _{\MF ^{*e}_S}(\C M,\C N)\To 
\Hom _{\Aug ^*_O}(\C A,\C B)$$
and the correspondences $\C M\mapsto \c A$ (and $\C N\mapsto \C B$) 
induce a faithful functor 
$\Psi ^*:\MF ^{e*}_S\To\Aug _O^*$. 
\end{Cor}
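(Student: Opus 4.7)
The plan is to construct $\Psi^*$ by composing natural identifications on Hom sets, threading through the intermediate category $\CMF_S$:
$$\Hom_{\MF^{e*}_S}(\c M,\c N) \simeq \Hom_{\CMF_S}(\iota_\c M(\c M), \iota_\c N(\c N)) \hookrightarrow \Hom_{\CMF_S}(\iota_\c M(\c M), \iota(\c B)) \simeq \Hom_{\Aug^*_O}(\c A,\c B),$$
where the final $\simeq$ is $\Theta^*$ from Proposition \ref{P1.7}d, the middle arrow is the injection induced by the subobject inclusion $\iota_\c N(\c N)\hookrightarrow \iota(\c B)$ in $\CMF_S$, and the first $\simeq$ is the principal new input to be established here.

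To obtain the first bijection, I apply Proposition \ref{P1.1} with $\theta_1=\iota_\c M$ and $\theta_2=\iota_\c N$, both of which are special $\varphi_1$-lifts by Proposition \ref{P1.5}. For any $h\in \Hom_{\CMF_S}(\iota_\c M(\c M), \iota_\c N(\c N))$, Proposition \ref{P1.1}a yields $\c L(h)\ne\emptyset$ and Proposition \ref{P1.1}b shows all elements of $\c L(h)$ coincide in $\MF^{e*}_S$. This defines a map $\alpha\colon h\mapsto [f]$ in one direction, with $\iota_\c N\circ f = h\circ\iota_\c M$.

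For the inverse, given $[f]\in\Hom_{\MF^{e*}_S}(\c M,\c N)$ and a representative $f$, I construct $h_{[f]}\colon\iota_\c M(\c M)\to\iota_\c N(\c N)$ by requiring $h_{[f]}\circ\iota_\c M = \iota_\c N\circ f$. Well-definedness on the image of $\iota_\c M$ amounts to $f(\Ker\iota_\c M)\subset \Ker\iota_\c N$; using the explicit presentations from Proposition \ref{P1.4} and the description of $\iota_\c M$, $\iota_\c N$ via the chosen vectors $\bar m^0,\bar m^1$ and the matrix $C$, this reduces to checking that $f$ is compatible with the DP-ideals $J_A$ and $J_B$ at the relevant level, which follows because $f$ preserves the filtrations $M^1\subset M^0$, $N^1\subset N^0$ and commutes with $\varphi_1$. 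Independence of the lift $f$ comes from the fact that a representative of the zero element in $\MF^{e*}_S$ factors through $\c M^{mult}\to\c N^{et}$; inspection of the proof of Proposition \ref{P1.1}b (in particular the analysis of $\widetilde{\c N}_2$ and the decomposition $T_2 = T_2'+(T_2\cap t^{2e}M_2^{0,et})$) shows that such a contribution vanishes after composing with $\iota_\c N$ in the category $\CMF_S$. Together, the maps $\alpha$ and $[f]\mapsto h_{[f]}$ are mutually inverse.

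Composing the three identifications gives the desired $\Psi^*$. Injectivity is automatic: $\Theta^*$ is bijective and the middle inclusion $\iota_\c N(\c N)\hookrightarrow \iota(\c B)$ is an inclusion of subobjects in $\CMF_S$. Functoriality in both arguments follows from the naturality of each ingredient — the functor $\iota$, the special $\varphi_1$-lifts $\iota_\c M$ and $\iota_\c N$, the construction in Proposition \ref{P1.1}, and $\Theta^*$ — so that the assignment $\c M\mapsto \c A$ (for any fixed choice of $\c A\in\Aug_O(\c M)$) determines a well-defined functor $\MF^{e*}_S\to\Aug^*_O$, with independence from the choice of $\c A$ obtained by applying Proposition \ref{P1.7}d to compare two elements of $\Aug_O(\c M)$. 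Faithfulness is simply the injectivity on Hom-sets already established. The main obstacle is the inverse direction of the first bijection: verifying that $\iota_\c N\circ f$ actually descends through $\iota_\c M$ and that the resulting $h_{[f]}$ depends only on $[f]$. This repackages the filtered-module compatibility of $f$ into a DP-ideal compatibility on $\iota(\c A)$ and $\iota(\c B)$, and the argument runs in parallel to the nilpotency/specialness analysis underlying Propositions \ref{P1.1} and \ref{P1.5}.
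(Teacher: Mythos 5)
Your overall route is exactly the one the paper has in mind (it is the chain of identifications displayed in the introduction's diagram): compose the bijection $\Hom_{\MF^{e*}_S}(\C M,\C N)\simeq\Hom_{\CMF_S}(\iota_{\C M}(\C M),\iota_{\C N}(\C N))$ coming from Propositions \ref{P1.5} and \ref{P1.1} with the subobject inclusion into $\Hom_{\CMF_S}(\iota_{\C M}(\C M),\iota(\C B))$ and with $\Theta^*$ from Proposition \ref{P1.7}(d). Your treatment of injectivity, of well-definedness on $R(\C M,\C N)$ (whose image lies in $t^{2e}N^{0,et}$ and hence dies modulo $J_B\supset 2I_{B^{et}}$), and of functoriality and independence of the choice of $\C A$ is fine.

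The one step that is not yet a proof is the surjectivity of $h\mapsto[f]$, i.e.\ the claim $f(\Ker\iota_{\C M})\subset\Ker\iota_{\C N}$. The reason you give --- that $f$ preserves the filtrations and commutes with $\varphi_1$ --- is just the hypothesis that $f$ is a morphism of $\MF^e_S$, and by itself it only yields $f(\Ker\iota_{\C M})\subset f(M^1)\subset N^1$, which is certainly not contained in $\Ker\iota_{\C N}$. What is actually needed is a finer identification of the kernel: one must show that the linear part of $J_A$ is $\sum_i\pi^e\tilde\eta_i'\,O X_i$, so that $\Ker\iota_{\C M}=t^eM^1+t^{2e}M^0$, after which $f(t^eM^1)\subset t^eN^1\subset\Ker\iota_{\C N}$ is immediate; this requires an argument in the spirit of Propositions \ref{P1.5} and \ref{PA1} that you have not supplied. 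A cleaner repair avoids the kernel computation altogether: send $[f]$ to $\iota_{\C N}\circ f\in\Hom_{\CMF_S}(\C M,\iota(\C B))$. The explicit description of Hom-sets in the proof of Proposition \ref{P1.7} shows that this set coincides with $\Hom_{\CMF_S}(\iota_{\C M}(\C M),\iota(\C B))$ --- equivalently, by Proposition \ref{P1.7}(a) every such morphism has the form $\iota(F)\circ\iota_{\C M}$ and therefore factors through $\iota_{\C M}(\C M)$ --- and since $\iota_{\C N}\circ f$ takes values in the subobject $\iota_{\C N}(\C N)$, the factorization $h_{[f]}$ you want comes for free. With that repair the proof is complete.
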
 

\ \

\subsection{Group schemes $\Spec A$, $(A, I_A)\in\Aug _O(\C M)$} 
\label{S1.7}

Suppose $\C M\in\MF ^e_S$ and $\C A=(A,I_A)\in \Aug _O(\C M)$ is 
given via a special choice of vectors $\bar m^0$, $\bar m^1$ and 
matrices $U\in M_u(S)$, $C\in M_u(O)$ 
under assumptions $C1$ and $C2$ 
from Subsection \ref{S1.4}. 

We can 
describe the structures of $\C M\oplus\C M$ and $A\otimes _OA$ via  
the $S$-basis $\bar m^0\oplus\{0\}$, 
$\{0\}\oplus\bar m^0$ for $M^0\oplus M^0$, 
the $S$-basis $\bar m^1\oplus\{0\}, \{0\}\oplus\bar m^1$ for 
$M^1\oplus M^1$ and the corresponding matrices 
$\left (\begin{array}{cc}U & 0 \\ 0 & U \end{array}\right )\in M_{2u}(S)$ 
and $\left (\begin{array}{cc}C & 0 \\ 0 & C \end{array}\right )\in M_{2u}(O)$. 
(One can easily 
see that these data satisfy assumptions $C1$ and $C2$ from Subsection \ref{S1.4}.)  
Note that $A=O[\bar X]/\C I_A$, where 
the ideal $\C I_A$ is generated by the coordinates 
of the vector $((\bar XC)^{(2)}+2\bar X){C^{(2)}}^{-1}$,  
$$\C A\otimes \C A=(A\otimes _OA, I_A\otimes _OA+A\otimes _OI_A)
\in\Aug _O(\C M)$$ 
and 
$$A\otimes _OA=O[\bar X\otimes 1, 1\otimes\bar X]
/(\C I_A\otimes 1, 1\otimes \C I_A).$$ 

Let $e:A\To A/I_A=O$ be the natural projection and 
let $\Delta ^*_{\c A}=\Psi ^*(\bigtriangledown)\in\Hom 
_{\Aug ^*_O}(\C A,\C A\otimes \C A)$, where  
$\bigtriangledown :\C M\To \C M\oplus\C M$ 
is the class of the diagonal morphism in the category $\MF ^{e*}_S$.

Let $\C L_{\C A}$ be the set of all 
$\Delta \in\Hom _{\Aug _O}(\C A,\C A\otimes \C A)$ such that:
\medskip 

$\bullet $\  $\Delta\operatorname
{mod}R=\Delta ^*_{\C A}$;
\medskip 

$\bullet $\  $G=\Spec A$ becomes an object of the category $\Gr _O$ when provided with 
the counit $e$ and the coaddition $\Delta $. 
\medskip 

\begin{Prop} \label{P1.10} 
 {\rm a)} $\C L_{\C A}\ne\emptyset $; 

{\rm b)} If $\Delta _1,\Delta _2\in\C L_{\C A}$ then the corresponding 
coalgebra structures on $\C A$ are transformed one into another 
via an automorphism $f\in\Aug _O(\C A)$ such that 
$f\underset{R}\sim \id _{\C A}$ (i.e. $f$ and $\id _{\C A}$ coincide in 
$\Aug _O^*$). 
\end{Prop}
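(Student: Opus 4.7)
\medskip

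\emph{Strategy.} The plan is to obtain $\Delta$ by lifting the class $\Delta^*_{\c A}=\Psi^*(\nabla)$ from $\Aug_O^*$ to $\Aug_O$, where $\nabla\colon\c M\to\c M\oplus\c M$ is the diagonal in $\MF_S^{e*}$, and then to adjust the lift so that the Hopf algebra axioms hold on the nose. For part (b), the uniqueness will be extracted from the strict action of the shift group from Proposition \ref{P1.7}(c) by realising the relevant shift via an automorphism of $\c A$.

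\emph{For part (a).} I would first use the surjectivity of $\Theta$ in Proposition \ref{P1.7}(a) to choose any representative $\Delta_0\in\Hom_{\Aug_O}(\c A,\c A\otimes\c A)$ of $\Delta^*_{\c A}$. Each Hopf axiom (counit, coassociativity, cocommutativity, antipode) equates two morphisms $\c A\to\c A^{\otimes k}$. Under the faithful functor $\Psi^*$ of Corollary \ref{C1.9}, these morphisms become equal because their counterparts in $\MF_S^{e*}$ (coassociativity of the diagonal, a projection composed with the diagonal equals the identity, the $(\id,-\id)$-followed-by-addition identity, etc.) hold trivially as relations between module homomorphisms. Hence by Proposition \ref{P1.7}(c) the failure of each axiom lives in the shift group $\Hom_{\MF_S^e}(\c M^{\mathrm{mult}},\c E(\c B^{et}))$ for the appropriate target $\c B$. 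The plan is then to adjust $\Delta_0$ by a suitable element of this shift group so that all failures vanish simultaneously; the antipode can be constructed afterwards as the (unique by Proposition \ref{P1.1}(a)) solution of the corresponding equation lifting $-\id_{\c M}$.

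\emph{For part (b).} Given $\Delta_1,\Delta_2\in\c L_{\c A}$, they coincide in $\Aug_O^*$, so $\Delta_2=\Delta_1+\delta$ for an explicit shift $\delta$. I would look for an automorphism $f=\id_{\c A}+\eta$ of $\c A$ in $\Aug_O$ with $\Psi^*(\eta)$ trivial (equivalently $f\underset{R}\sim\id_{\c A}$), satisfying the intertwining relation $(f\otimes f)\circ\Delta_1=\Delta_2\circ f$. Expanding this relation modulo the ideal $\widetilde J_{\c A\otimes\c A}$, one obtains a linear equation in $\eta$ of the form ``coboundary of $\eta$ $=\delta$'', which is solvable by the same surjectivity-of-$\id-\varphi$ argument used in Lemma \ref{L1.1d} after identifying the shift group with an appropriate module of $\sigma$-linear algebra data.

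\emph{Main obstacle.} The principal difficulty will be Step 3 of part (a): coordinating the adjustments so that, for example, enforcing the counit axiom does not destroy coassociativity. This is a compatibility-cocycle problem whose solvability is the actual content of the proposition. The fact that $p=2$ leaves no ``safety margin'' in the $DP$-structure of $2O$, as emphasised in the Introduction, makes these adjustments considerably more delicate than in the $p>2$ case of \cite{Ab}; I expect to handle them by filtering $\c A\otimes\c A$ by powers of $J$ and $\widetilde J$ and inductively killing the obstructions layer by layer, using Lemma \ref{L1.1d} at each step to produce the required shift.
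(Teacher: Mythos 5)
Your overall shape (lift the diagonal, then conjugate two lifts by an automorphism congruent to the identity) matches the paper, but the mechanism you propose for making it work is not the one that actually does, and I don't see that it would succeed as stated. The paper first localises the ambiguity: writing $\Delta(\bar X)=\bar X\otimes 1+1\otimes\bar X+\bar\jmath$, the etale component $\bar\jmath^{et}$ is forced, and $\Delta$ depends only on $\bar\jmath\bmod\widetilde J_{A\otimes A}$, which lives in the single $k$-vector space $2I^u_{A^{et}\otimes A^{et}}/2\pi I^u_{A^{et}\otimes A^{et}}$ --- so there is exactly one ``layer'', and your proposed filtration by powers of $J$ and $\widetilde J$ with repeated applications of Lemma \ref{L1.1d} is not the relevant structure. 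The axioms are then translated into conditions on $\bar\alpha$ (where $\bar\jmath\equiv 2\bar\alpha$): compatibility with the defining equations of $A$, membership of $\bar\alpha\bmod\pi$ in the symmetric Hochschild $2$-cocycles $Z^2_{sym}(G^{et}_k)$, and --- this is the crux --- membership of $\bar\alpha^{loc}\bmod\pi$ in the coboundaries $B^2(G^{et}_k)$, which is exactly equivalent to $2\id_G=0$ via the criterion $\gamma\in B^2\Leftrightarrow\mult(\gamma)=0$. With this in hand, part (a) needs no obstruction-killing at all: the choice $\bar\alpha^{loc}=\bar 0$, i.e.\ $\Delta_0(\bar X^{loc})\equiv\bar X^{loc}\otimes 1+1\otimes\bar X^{loc}\bmod\widetilde J_{A\otimes A}$, already satisfies all three conditions. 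Your plan never identifies the obstruction space as Hochschild cohomology of the special fibre of the etale quotient, which is the idea that makes the coordination of the axioms (your ``main obstacle'') a non-issue.

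For part (b) you correctly reduce to solving ``$\delta^+\eta=$ discrepancy'', but you attribute its solvability to the surjectivity of $\id-\varphi$ from Lemma \ref{L1.1d}; that is the wrong tool. Solvability holds because the discrepancy $\bar\alpha^{loc}\bmod\pi$ is already known to lie in $B^2(G^{et}_k)$ (again by $2\id_G=0$), so it equals $\delta^+\bar\gamma^{loc}$ for some $\bar\gamma^{loc}$. There is a further integrality point you omit: to promote $\bar X\mapsto\bar X+2\bar\gamma$ to an actual endomorphism of the algebra $\c A$ one needs $(\bar\gamma C)^{(2)}+\bar\gamma\equiv 0\bmod\pi I_{A^{et}}$, which is arranged by taking $\bar\gamma^{loc}=\omega(\delta^+\bar\alpha^{loc})$ with $\omega$ the explicit monomial section of $\delta^+$ on $B^2(G^{et}_k)$, chosen precisely because it commutes with squaring. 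Without that choice the intertwining map need not be well defined on $\c A$. (Minor: the antipode is not an issue here since $2\id_G=0$ forces it to be the identity.)
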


\begin{proof} Let $\bar X=(\bar X^{loc}, \bar X^{et})$ with respect 
to the presentation $\bar m=(\bar m^{loc}, \bar m^{et})$ from 
condition $C2$ in Subsection \ref{S1.4}.  
For $\Delta\in\C L_{\C A}$, set 
$$\Delta (\bar X)=(\Delta (\bar X^{loc}), \Delta (\bar X^{et}))=
\bar X\otimes 1+1\otimes \bar X+\bar\jmath ,$$
where $\bar\jmath =(\bar\jmath ^{loc}, \bar\jmath ^{et})$. 
 
Note that $\bar\jmath ^{et}$ does not depend on a choice of 
$\Delta \in\C L_{\C A}$. 
This implies that $G^{et}:=\Spec A^{et}\in\Gr _O^{et}$ when provided with 
the coaddition $\Delta ^{et}=\Delta |_{A^{et}}$ and the counit 
$e^{et}=e|_{A^{et}}$. More explicitly, 
$A^{et}=O[\bar X^{et}]$ 
with the equations $\eta \bar X^{et(2)}\equiv\bar X^{et}\operatorname{mod}2I_{A^{et}}$, 
where $\eta =-\pi ^{2e}/2\in O^*$, and $\Delta (\bar X^{et})
\equiv \bar X^{et}\otimes 1+1\otimes\bar X^{et}
\operatorname{mod}2I_{A^{et}\otimes A^{et}}$.

Let $G_k^{et}:=G^{et}\otimes _Ok=\Spec A^{et}_k$. 
Remind that the $k$-module of symmetric Hochschild 2-cocycles 
$Z^2_{sym}(G_k^{et})$ consists of symmetric 
$\gamma\in I_{A_k^{et}\otimes A_k^{et}} $ such that 
$$(\Delta\otimes \id )\gamma +\gamma\otimes 1=1\otimes\gamma  
+(\id\otimes \Delta )\gamma .$$ 
The corresponding $k$-module of 2-coboundaries equals 
$$B^2(G_k^{et})=\{\Delta ^{et}(\gamma )-\gamma\otimes 1-1\otimes\gamma 
\ |\ \gamma\in I_{A_k^{et}}\}\subset Z^2_{sym}(G_k^{et})$$ 

We have the following two facts:

$\bullet $\  Suppose $\gamma\in Z^2_{sym}(G_k^{et})$ and 
$\mult :A^{et}_k\otimes A^{et}_k\To A^{et}_k$ is 
the morphism of multiplication. Then 
$\gamma\in B^2(G_k^{et})\ \Leftrightarrow\ \ \mult (\gamma )=0$. 

$\bullet $\  If $\bar X^{et}=(X_1^{et},\dots ,X_{u_{et}}^{et})$ then the elements 
$\delta ^+(X^{et}_{i_1}\dots X_{i_s}^{et})\operatorname{mod}\pi $, 
where $s\geqslant 2$, 
$1\leqslant i_1<\dots <i_s\leqslant u_{et}$ and 
$\delta ^+=\Delta -\id \otimes 1-1\otimes \id $, form a $k$-basis 
of $B^2(G_k^{et})$ and the correspondences 
$\delta ^+(X^{et}_{i_1}\dots X_{i_s}^{et})\mapsto X^{et}_{i_1}
\dots X^{et}_{i_s}$ 
determine the $k$-linear embedding $\omega :B^2(G_k^{et})
\To I_A^{et}\operatorname{mod}\pi $. Note that for any 
$\alpha\in B^2(G_k^{et})$, 
$\alpha ^2\in B^2(G_k^{et})$ and $\omega (\alpha ^2)=\omega (\alpha )^2$.

Now notice that $\Delta $ depends only on the residue 
$$\bar \jmath \operatorname{mod}\widetilde{J}_{A\otimes A}
\in J^u_{A\otimes A}\operatorname{mod}\widetilde J_{A\otimes A}
=2I^u_{A^{et}\otimes A^{et}}\operatorname{mod}(2\pi I_{A^{et}\otimes A^{et}})$$
Let $\bar\jmath =2\bar\alpha \operatorname{mod}2\pi I_{A^{et}\otimes A^{et}}$, where 
$\bar\alpha (\Delta )=(\bar\alpha ^{loc}(\Delta ), 
\bar\alpha ^{et})\in I^u_{A^{et}\otimes A^{et}}$. 

We have the following properties:

a) $\Delta $ defines a morphism of augmented algebras iff 
$(\bar\alpha C)^{(2)}+\bar\alpha $ has all its coordinates in 
$\pi I_{A^{et}\otimes A^{et}}$.

b) $\Delta $ determines a structure of commutative group scheme on $G=\Spec A$ 
iff $\bar\alpha \operatorname{mod}\pi I_{A^{et}\otimes A^{et}}$ has all 
its coordinates in $Z^2_{sym}(G_k^{et})$.

c) $2\id _G=0$ iff $\bar\alpha ^{loc}\operatorname{mod}\pi 
I_{A^{et}\otimes A^{et}}$ 
has all its coordinates in $B^2(G_k^{et})$. 

\ \ 

The proof of property a) uses the equations 
$(-1/2)(\bar XC)^{(2)}= \bar X$ for $A$, property 
b) is equivalent to the axioms of 
coassociativity and cocommutativity for $G$. 
As for property c), note that 
$\bar 0=2\id _G(\bar X)=\mult (\Delta (\bar X))=
2\bar X+\mult(\bar\jmath )$ and, therefore, 
$\mult\bar\jmath ^{loc}$ has all its coordinates 
in $\widetilde{J}_{A\otimes A}$ or, equivalently, 
$\mult (\bar\alpha ^{loc})\equiv 0\operatorname{mod}\pi I_{A^{et}}$. 

Now we can finish the proof of Proposition \ref{P1.10}. 

Let $\Delta _0\in\Delta ^*_{\C A}$ be such that 
$$\Delta _0(\bar X^{loc})\equiv \bar X^{loc}
\otimes 1+1\otimes\bar X^{loc}\operatorname{mod}
\widetilde{J}_{A\otimes A}.$$
This means that $\bar\alpha (\Delta _0)=(\bar 0, \bar\alpha ^{et})$ and 
by above properties a)-c), we have $\Delta _0\in \C L_{\C A}$. 

Suppose $\Delta\in\C L_{\C A}$ and $\bar\alpha (\Delta )=(\bar\alpha ^{loc}
(\Delta ),\bar\alpha ^{et})$. Let $\bar\gamma =(\gamma ^{loc},
\bar 0)\in I_{A^{et}}^n$ 
be such that $\delta ^+(\bar\gamma ^{loc}
\operatorname{mod}\pi I_{A^{et}})
=\bar\alpha ^{loc}\operatorname{mod}\pi I_{A^{et}\otimes A^{et}}$. 
We can assume that 
$\bar\gamma ^{loc}=\omega (\delta ^+\bar\alpha ^{loc})$ and, 
therefore, $(\bar\gamma C)^{(2)}+\bar\gamma $ has all its coordinates in 
$\pi I_{A^{et}}$. Therefore, 
there is  a unique $F\in\Hom _{\Aug _O}(\C A, \C A\otimes\C A)$ such that 
$F(\bar X)\equiv\bar X+2\bar\gamma \operatorname{mod}\widetilde{J}_{A\otimes A}$. 
Clearly, $F\underset{R}\sim \id _{\C A}$ in $\Aug ^*_O$. 

In addition, 
$\Delta _0(\bar F(\bar X))\equiv\Delta _0(\bar X+2\bar\gamma )\equiv $
$$(\bar X^{loc}\otimes 1+1\otimes\bar X^{loc}
+2\Delta ^{et}(\bar\gamma ^{loc}), \Delta _0(\bar X^{et}))$$
$$\equiv (\bar X^{loc}\otimes 1+1\otimes\bar X^{loc}+2\bar\jmath ^{loc}
+2(\bar\gamma ^{loc}\otimes 1+1\otimes\bar\gamma ^{loc}), 
\Delta (\bar X^{et}))$$
$$\equiv (F\otimes F)(\Delta (\bar X))\operatorname{mod}
\widetilde{J}_{A\otimes A}.$$

Therefore, $F\circ\Delta _0=\Delta\circ (F\otimes F)$. 
The Proposition is proved.
\end{proof}

\subsection {Functor $\C G_O$.} \label{S1.8} 

\begin{Prop} 
\label{P1.11} 
There is a functor $\C G_O:\MF ^{e*}_S\To\Gr ^*_O$ such that 
 its compositoon with the forgetful functor 
$\Gr ^*_O\To\Aug ^*_O$ coincides 
with the functor $\Psi ^*$ from Corollary {\it \ref{C1.9}}.
\end{Prop}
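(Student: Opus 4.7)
I would define $\c G_O$ on objects by choosing, for each $\c M\in\MF ^{e*}_S$, some $\c A(\c M)=(A(\c M),I_{A(\c M)})\in\Aug _O(\c M)$ together with a coaddition $\Delta \in\c L_{\c A(\c M)}$ whose existence is guaranteed by Proposition \ref{P1.10}(a). Then $\Spec A(\c M)$, equipped with this $\Delta $ and the counit $e:A(\c M)\To O$, is an object of $\Gr _O$, and by Proposition \ref{P1.10}(b) its class in $\Gr ^*_O$ does not depend on the choice of $\Delta $; call this class $\c G_O(\c M)$. For a morphism $f\in \Hom _{\MF ^{e*}_S}(\c M,\c N)$ I would define $\c G_O(f):=\Psi ^*(f)\in \Hom _{\Aug ^*_O}(\c A(\c N),\c A(\c M))$ from Corollary \ref{C1.9}. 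By construction, composing with the forgetful functor $\Gr ^*_O\To\Aug ^*_O$ recovers $\Psi ^*$, and functoriality and preservation of counits are automatic. What must be checked is that $\Psi ^*(f)$ commutes with coadditions modulo $R$-equivalence.

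The key step is this compatibility with coadditions. By the very definition of $\c L_{\c A(\c M)}$, the class $\Delta ^*_{\c A(\c M)}$ equals $\Psi ^*(\bigtriangledown _{\c M})$, where $\bigtriangledown _{\c M}:\c M\To \c M\oplus \c M$ is the diagonal. The explicit description of $\c A\otimes \c A$ in Subsection \ref{S1.7} generalizes directly to show that $\c A(\c M)\otimes \c A(\c N)$ can be chosen as a member of $\Aug _O(\c M\oplus\c N)$, and under this choice $\Psi ^*(g_1\oplus g_2)=\Psi ^*(g_1)\otimes \Psi ^*(g_2)$ for morphisms $g_i$ in $\MF ^{e*}_S$. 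Combining this with functoriality of $\Psi ^*$ and naturality of $\bigtriangledown $ yields
$$
\Delta ^*_{\c A(\c N)}\circ \Psi ^*(f)=\Psi ^*(\bigtriangledown _{\c N}\circ f)=\Psi ^*((f\oplus f)\circ \bigtriangledown _{\c M})=(\Psi ^*(f)\otimes \Psi ^*(f))\circ \Delta ^*_{\c A(\c M)}
$$
in $\Aug ^*_O$, which is exactly the desired compatibility.

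The main obstacle is making the two identifications used above rigorous: that $\c A(\c M)\otimes \c A(\c N)$ indeed lies in $\Aug _O(\c M\oplus \c N)$, and that the map induced by $g_1\oplus g_2$ coincides (up to $R$) with the tensor product of the maps induced by $g_1$ and $g_2$. This requires verifying that the block-diagonal choices of vectors $\bar m^0,\bar m^1$ and matrix $C$ satisfy conditions $\mathop{C1}$ and $\mathop{C2}$ of Subsection \ref{S1.4}, and tracing through Proposition \ref{P1.7} to see that $\Theta ^*$ respects direct sums. A secondary point is independence of the construction on the choice of $\c A(\c M)\in\Aug _O(\c M)$ itself: any two choices $\c A,\c A'$ are isomorphic in $\Aug ^*_O$ via $\Psi ^*(\id _{\c M})$, and the same diagonal argument shows this isomorphism carries any coaddition of $\c A$ to a coaddition of $\c A'$ modulo $R$, yielding a canonical isomorphism in $\Gr ^*_O$.
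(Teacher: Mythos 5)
There is a genuine gap. A morphism in $\Gr ^*_O$ from $\c G_O(\c N)$ to $\c G_O(\c M)$ is by definition an equivalence class of \emph{actual} morphisms of group schemes, i.e.\ of algebra maps $F$ satisfying $(F\otimes F)\circ\Delta _1=\Delta _2\circ F$ on the nose. Your argument only shows that the class $\Psi ^*(f)$ commutes with the coadditions \emph{modulo the $R$-equivalence of $\Aug ^*_O$}, i.e.\ that for a representative $F$ the defect $(F\otimes F)(\Delta _1(\bar X_1))-\Delta _2(F(\bar X_1))$ lies in $J_{A_2\otimes A_2}$. That is exactly the starting point of the paper's proof, not its conclusion: you have not produced any element of $\Hom _{\Gr _O}(\c G_O(\c N),\c G_O(\c M))$, so $\c G_O(f)$ is not yet defined as a morphism of $\Gr ^*_O$. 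Declaring $\c G_O(f):=\Psi ^*(f)\in\Hom _{\Aug ^*_O}$ presupposes that the forgetful map $\Hom _{\Gr ^*_O}\To\Hom _{\Aug ^*_O}$ hits this class, which is precisely what must be proved.

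The missing step is the rectification of $F$ within its $R$-class. The paper writes the defect as $2\bar\alpha \operatorname{mod}\widetilde J_{A_2\otimes A_2}$ with $\bar\alpha $ supported on $I_{A_2^{et}\otimes A_2^{et}}$, checks that $\bar\alpha \operatorname{mod}\pi $ is a symmetric Hochschild $2$-cocycle of $G_2^{et}\otimes k$, uses $2\,\id =\Delta\circ\mult $ to show it is in fact a coboundary in $B^2(G_2^{et}\otimes k)$, and then modifies $F$ by $2\bar\gamma $ with $\bar\gamma =\omega (\bar\alpha )$ (the explicit section of $\delta ^+$) to obtain $F'\underset{R}\sim F$ with $(F'\otimes F')\circ\Delta _1=\Delta _2\circ F'$ exactly. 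One must also verify, conversely, that any two such rectifications $F',F''\in\Psi ^*(f)$ that are genuine group scheme morphisms define the same class in $\Gr ^*_O$ (here the two equivalence relations, congruence mod $J$ versus factoring through $G^{et}\To G^{mult}$, have to be compared). Without this cocycle/coboundary argument the functor is not defined on morphisms, so the proposal as it stands does not prove the proposition.
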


\begin{proof} For $i=1,2$, let 

$\bullet $\ $\C M_i\in\MF ^e_S$ with specially chosen 
vectors $\bar m_i^0$, $\bar m_i^1$ satisfying the conditions $C1$, $C2$ 
fom Subsection \ref{S1.4};

$\bullet $\ $\C A_i=(A_i, I_{A_i})$ be the corresponding augmented $O$-algebras 
with the coalgebra structures uniquely given by the coadditions 
$\Delta _i:A_i\To A_i\otimes A_i$ such that $\Delta _i\in\Psi ^*(\bigtriangledown _i)$ 
(where $\bigtriangledown _i$ are the diagonal maps from $\C M_i$ to 
$\C M_i\oplus\C M_i$) and 
$\Delta _i(\bar X_i^{loc})
\equiv\bar X_i^{loc}\otimes 1+1\otimes\bar X_i^{loc}
\operatorname{mod}\widetilde J_{A_i\otimes A_i}$. 

Denote the corresponding group schemes $\Spec A_i=G_i\in\Gr _O$. 

Suppose $f\in\Hom _{\CMF _S}(\iota (\C M_1), \iota (\C M_2))=
\Hom _{\MF ^{e*}_S}(\C M_1,\C M_2)$ 
and $F\in \Hom _{\Aug _O}(\C A_1,\C A_2)$ is such that $F\in\Psi ^*(f)$. 
Then 
$$(F\otimes F)(\Delta _1(\bar X_1))-\Delta _2(F(\bar X_1))
\in J_{A_2\otimes A_2}.$$

Let $\bar\alpha =(\bar\alpha ^{loc}, \bar\alpha ^{et})$ be the vector 
with the coordinates in $I_{A_2^{et}\otimes A_2^{et}}$ such that 
$$(F\otimes F)(\Delta _1(\bar X_1))-\Delta _2(F(\bar X_1))
\equiv 2\bar\alpha\operatorname{mod} \widetilde{J}_{A_2\otimes A_2}.$$

Note that if $F^{et}:=F|_{A_1^{et}}$ then 
the congruence $(F^{et}\otimes F^{et})(\Delta _1(\bar X_1^{et}))
\equiv\Delta _2(F^{et}(\bar X_1^{et}))\operatorname{mod}2A_2^{et}$ 
implies that $\bar\alpha ^{et}=\bar 0$ and $F^{et}$ induces a morphism 
of etale group schemes $G_2^{et}\To G_1^{et}$. 

Using that $F$ is a morphism of augmented $O$-algebras we obtain that  
$(\bar\alpha C)^{(2)}+\bar\alpha $ has all  
coordinates in $\pi I_{A^{et}\otimes A^{et}}$.

One can verify easily that $\bar\alpha 
\operatorname{mod}\pi I_{A_2^{et}\otimes A_2^{et}}$ 
has all coordinates in $Z^2_{sym}(G_2^{et}\otimes k)$ and using that for $i=1,2$, 
$2\,\id _{G_i}=\Delta _{G_i}\circ \mult $ we obtain that 
$\bar\alpha \operatorname{mod}\pi I_{A_2^{et}\otimes A_2^{et}}$ has all 
coordinates in $B^2(G_2^{et}\otimes k)$. 

Let $\bar\gamma =(\gamma ^{loc},\bar 0)$ be the vector with 
coordinates in $I_{A_2^{et}}$ such that 
$\bar\gamma\operatorname{mod}\pi I_{A_2^{et}}=
\omega (\bar\alpha \operatorname{mod}\pi I_{A_2^{et}\otimes A_2^{et}})$. 
Then 
$$\Delta _2^{et}(\bar\gamma )\equiv 
\bar\gamma \otimes 1+1\otimes\bar\gamma +\bar\alpha \operatorname{mod}
\pi I_{A_2^{et}\otimes A_2^{et}}$$ 
and 
$(\bar\gamma C)^{(2)}+\bar\gamma $ has all coordinates in $\pi I_{A_2^{et}}$. 
This implies that there is an $F'\in\Aug _O(\C A_1,\C A_2)$ such that 
$F'(\bar X_1^{et})=F(\bar X_1^{et})$ and 
$F'(\bar X^{loc})\equiv F(\bar X_1^{loc})+2\bar\gamma \operatorname{mod}
\widetilde{J}_{A_2\otimes A_2}$. Therefore, 
$$(F'\otimes F')(\Delta _1(\bar X_1))\equiv \Delta _2(F'(\bar X_1))
\operatorname{mod}\widetilde{J}_{A_2\otimes A_2},$$
and $(F'\otimes F')\circ\Delta _1=\Delta _2\circ F'$. 
This proves the existence of $F'\in\Psi ^*(f)$ such that 
$\Spec F'\in\Hom _{\Gr _O}(G_2, G_1)$. 

Similarly, one can verify that if $F''\in\Psi ^*(f)$ is such that 
$\Spec (F'')\in\Hom _{\Gr _O}(G_2,G_1)$ then $F'$ and $F''$ are equivalent 
in the category $\Gr _O^*$ with the obvious inverse statement. 
The proposition is proved.
\end{proof}

\subsection{Full faithfulness of $\C G_O$} 
\label{S1.9} 

Suppose $\C N=(N^0,N^1,\varphi _1)\in\MF ^e_S$ 
and $\C G_O(\C N)=\Spec B$. 

\subsubsection{Special construction of $B$.} \label{S1.9.1} 

Use the following special case of the construction of 
the $O$-algebra $B$ from Subsection \ref{S1.4}. 

Let $n_1^1,\dots ,n^1_u$ be an $S$-basis of $N^1$ such that 
there are $\tilde s_1,\dots ,\tilde s_u\in S$ and an $S$-basis 
$n_1,\dots ,n_u$ of $N^0$ such that 
for $1\leqslant i\leqslant u$, $n_i^1=\tilde s_in_i$. One 
can easily see that 
this choice of $\bar n^1=(n_1^1,\dots ,n^1_u)$ 
can be made in such a way that conditions $C1$ and $C2$ 
from Subsection \ref{S1.4}. 
are satisfied. Also notice that all $\tilde s_i$ divide $t^e$. 

Set $\bar n^0=\varphi _1(\bar n^1)$ and let the matrices 
$U\in M_u(S)$ and $U_0\in\GL _u(S)$ be such that 
$\bar n^1=\bar n^0U$ and $\bar n^0U_0=\bar n:=(n_1,\dots ,n_u)$. 
Then $U=U_0U_1$, where $U_1=(\tilde s_i
\delta _{ij})_{1\leqslant i,j\leqslant u}$ 
is diagonal. Choose  $\widetilde{\Omega}=
(\tilde\eta '_i\delta _{ij})_{1\leqslant i,j\leqslant u}
\in M_u(O)$ and $D\in \GL _u(O)$ such that 
$\kappa _{SO}(U_1\operatorname{mod}t^{2e})=\widetilde{\Omega }
\operatorname{mod}2$ and 
$\kappa _{SO}(U_0\operatorname{mod}t^{2e})=D\operatorname{mod}2$. 
Then for $\bar Y=(Y_1,\dots ,Y_u)$, the $O$-algebra $B$ is 
the quotient of $O[\bar Y]:=O[Y_1,\dots ,Y_u]$ by the 
ideal generated by the coordinates of 
the vector 
$$\left ((\bar YD\widetilde{\Omega })^{(2)}+2\bar Y\right )
{(D\widetilde{\Omega })^{(2)}}^{-1}$$
Then in the new coordinates $\bar X=(X_1,\dots ,X_u):=\bar YD$, 
$B$ is the quotient of $O[\bar X]:=O[X_1,\dots ,X_u]$  
by the ideal generated by the elements  
$$X_i^2-\eta _i\sum _jX_jc_{ji},\ \ 1\leqslant i\leqslant u,$$
Here $C=(c_{ij})=D^{-1}$,  and for all $i$, 
$\eta _i=-2/\tilde\eta ^{\prime 2}$. With this notation, 
the counit $e:B\To O$ and the coaddition 
$\Delta :B\To B\otimes _OB$ are uniquely recovered 
(in the category $\Gr ^*_O$) from the conditions 
$e(X_i)=0$ and $\Delta (X_i)\equiv 
X_i\otimes 1+1\otimes X_i\operatorname{mod}J_{B\otimes B}$. 

Remind that $\bar X=(\bar X^{loc}, \bar X^{et})$, 
where for $u_0=\dim _k\bar N^{0,loc}$, 
$\bar X^{loc}=(X_1,\dots ,X_{u_0})$ and 
$\bar X^{et}=(X_{u_0+1},\dots ,X_u)$. Then condition $C2$ implies:

$\bullet $\ for $1\leqslant i\leqslant u_0$, 
$X_i, X_i^2/\eta _i\in I_B^{loc}$;

$\bullet $\ for $u_0<i\leqslant u$, $\eta _i\in O^*$ and 
$X_i\in I_{B^{et}}$.

Therefore, the matrix $C=(c_{ij})$ has the following block structure 
$C=\left (\begin{array} {c c}C_0 & 0 \\ C' & C^{et} \end{array} \right )$, 
where $C_0\in\GL _{u_0}(O)$, $C^{et}\in\GL _{u^{et}}(O)$ with $u^{et}:=u-u_0$,  
$C'\equiv 0\operatorname{mod}\pi $. In particular, 

${\bf (C)} $ {\it if $C=(c_{ij})$ and $D=C^{-1}=(d_{ij})$ then 
$c_{ij}\equiv d_{ij}\equiv 0\operatorname{mod}\pi $ if either 
$i\leqslant u_0<j$ or $j\leqslant u_0<i$. }
\medskip 

Let $\delta ^+X_i=j_i\in I_{B\otimes B}$, $1\leqslant i\leqslant u$. 
The coordinates  
of the vector $\bar\jmath =(\bar\jmath ^{loc},\bar\jmath ^{et})$, 
where $\bar\jmath ^{loc}=(j_1,\dots ,j_{u_0})$ and 
$\bar\jmath ^{et}=(j_{u_0+1},\dots ,j_u)$, appear 
as the solutions in $I_{B\otimes B}$ of the 
system of equations  
\begin{equation} \label{E1.1}
 \sum _sj_sc_{si}=-\tilde\eta _iX_i\otimes X_i-(\tilde\eta _iX_1\otimes 1
+1\otimes \tilde\eta _iX_i)j_i-\tilde\eta _ij_i^2/2, 
\end{equation}
where $1\leqslant i\leqslant u$ and 
$\tilde\eta _i=-2/\eta _i=\tilde\eta ^{\prime 2}$. 

The coordinates of $\bar\jmath ^{et}$ are determined by these equations 
uniquely and belong to the ideal $J_B$. The coordinates of 
$\bar\jmath ^{loc}$ are unique under the assumption that 
$j_1,\dots ,j_{u_0}\in \widetilde{J}_{B\otimes B}\subset J_{B\otimes B}$. 
\medskip 

\begin{remark} a) One can easily verify that 
the above system of equations when considered 
modulo any $DP$-ideal $\widetilde{I}$ 
of $B$ such that $\widetilde{I}\subset\widetilde{J}_{B\otimes B}$ 
has a unique solution $\bar\jmath\operatorname{mod}\widetilde{I}$ 
under the assumption that for $1\leqslant i\leqslant u_0$, all 
$j_i\in\widetilde{J}_{B\otimes B}$. 

b) The above description of the coalgebra $B$ 
is related to a very special choice of 
$S$-bases in $N^0$ and $N^1$. This choice is sufficient for   
the formal construction of the algebras $\c A(\c M)$ in Subsection \ref{S1.3}.   
But 
when proving the full faithfulness of $\c G_O$ below in Section \ref{S3} we need 
a choice of appropriate bases which is 
compatible with extensions in $\MF ^e_S$. 
Such choice is possible 
under more general assumptions from Subsection \ref{S1.3}.   
\end{remark}

\subsubsection{Recovering $\C N$} \label{S1.9.3}

In the above construction of the $O$-algebra $B$, any 
$a\in I_B$ can be uniquely written as 
\begin{equation} 
\label{E1.2} 
a=\sum _{ \underline{i}} o_{\underline{i}}
X^{\underline{i}}=\sum _{\underline{i}}  
o_{\underline{i}}X_1^{i_1}\dots X_u^{i_u}
\end{equation} 
where (by our general agreement from Introduction) 
$\underline{i}=(i_1,\dots ,i_u)$ is a non-zero vector with the coordinates 
$i_1,\dots ,i_u\in\{0,1\}$, and all coefficients 
$o_{\underline{i}}=o_{\underline{i}}(a)$ belong to $O$. 
Similarly, any $a\in I_{B\otimes B}$ can be uniquely written as 
an $O$-linear combination of the elements $X^{\underline{i}_1}\otimes 1$, 
$1\otimes X^{\underline{i}_2}$ and 
$X^{\underline{i}_1}\otimes X^{\underline{i}_2}$

Consider the following property of ideals $I\subset I_B$ in $B$ 
(or with the obvious changes in $B\otimes B$, 
$\bar B$, $\bar B\otimes\bar B$, etc.). 

\begin{equation} \label{AE3} 
 \sum _{\underline{i}}o_{\underline{i}}X^{\underline{i}}\in I\ \Leftrightarrow 
\ \ \forall\underline{i},\ \ o_{\underline{i}}X^{\underline{i}}\in I
\end{equation}

\begin{remark}
 Suppose $I_1$ and $I_2$ satisfy property \eqref{AE3}. Then 

a) $I_1+I_2$ satisfies property \eqref{AE3};

b) for any monomial $o_{\underline{i}}X^{\underline{i}}$, 
$o_{\underline{i}}X^{\underline{i}}\in I_1+I_2$ iff 
either $o_{\underline{i}}X^{\underline{i}}\in I_1$ or 
$o_{\underline{i}}X^{\underline{i}}\in I_2$.
\end{remark}

\begin{Prop} \label{PA1}
The ideals $J_B$ and $\widetilde{J}_B$ satisfy property {\rm \eqref{AE3}}.
\end{Prop}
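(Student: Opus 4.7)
The plan is to decompose $J_B$ and $\widetilde J_B$ along the monomial basis by first establishing~\eqref{AE3} for the building-block ideals $I_B^{loc}$, $I_{B^{et}}$, $I_B(2)$, $I_B(2)^{loc}$, and then assembling them via the defining formulas
$$J_B=I_B(2)^2+\pi^eI_B(2),\qquad \widetilde{J}_B=I_B(2)^{loc}I_B(2)+\pi^eI_B(2)^{loc}.$$
Since sums of ideals with~\eqref{AE3} again satisfy~\eqref{AE3} by part~a) of the preceding Remark, and multiplication by the single scalar $\pi^e$ trivially preserves~\eqref{AE3}, the real content is (i) an explicit monomial description of $I_B(2)$ and $I_B(2)^{loc}$, and (ii) the verification that the two products $I_B(2)^2$ and $I_B(2)^{loc}\cdot I_B(2)$ inherit such a description.

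For (i) I would argue coefficient by coefficient. Since $X_i$ is topologically nilpotent for $i\le u_0$ and sits in the maximal etale subalgebra for $i>u_0$, the ideals $I_B^{loc}$ and $I_{B^{et}}$ are manifestly of the form $\bigoplus_{\underline i}\mathfrak{a}_{\underline i}X^{\underline i}$. For $I_B(2)$, given $b=\sum_{\underline i}o_{\underline i}X^{\underline i}\in I_B$, I would use
$$b^2\equiv\sum_{\underline i}o_{\underline i}^2\,(X^{\underline i})^2\pmod{2I_B}$$
(the cross terms carry an explicit factor of $2$), and reduce each $(X^{\underline i})^2=\prod_{k\in\mathrm{supp}(\underline i)}\bigl(\eta_k\sum_{j}c_{jk}X_j\bigr)$ via the defining relations. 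Combining the $2$-adic valuations of the scalars $\eta_k$ with condition~\textbf{(C)} on~$C$, I would extract for every $\underline i$ the minimal integer $v_{\underline i}$ such that $\pi^{v_{\underline i}}X^{\underline i}\in I_B(2)$, and then, by collecting the coefficient of each basis monomial $X^{\underline k}$ in the displayed expansion, check that $b^2\in 2I_B$ decouples into the monomial-wise inequalities $v(o_{\underline i})\ge v_{\underline i}$. This yields $I_B(2)=\bigoplus_{\underline i}\pi^{v_{\underline i}}OX^{\underline i}$, and hence~\eqref{AE3}; replacing $2I_B$ by $2I_B^{loc}$ throughout handles $I_B(2)^{loc}$ by exactly the same procedure.

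The main obstacle is step~(ii): unlike sums, a product of two basis monomials $X^{\underline i}\cdot X^{\underline j}$ remains a single basis monomial only when $\mathrm{supp}(\underline i)$ and $\mathrm{supp}(\underline j)$ are disjoint, while otherwise the relation $X_k^2=\eta_k\sum_lc_{lk}X_l$ at each shared index~$k$ forces the product to expand as an $O$-linear combination of several basis monomials, so a priori the products $I_B(2)^2$ and $I_B(2)^{loc}\cdot I_B(2)$ could fail~\eqref{AE3}. To handle this I would argue monomial by monomial: the disjoint-support case $(\pi^{v_{\underline i}}X^{\underline i})(\pi^{v_{\underline j}}X^{\underline j})=\pi^{v_{\underline i}+v_{\underline j}}X^{\underline i+\underline j}$ is automatically diagonal, and in the overlapping case the factor $\eta_k$ pulled out by the relation provides exactly the additional $2$-adic depth needed so that each resulting summand $\eta_kc_{lk}\pi^{v_{\underline i}+v_{\underline j}}X^{(\underline i+\underline j)-e_k+e_l}$ already lies in the predicted diagonal piece of $I_B(2)^2$ (where $e_k,e_l$ denote the standard basis vectors); condition~\textbf{(C)} controls the subcase where the application of the relation would otherwise cross the local/etale divide in $I_B(2)^{loc}\cdot I_B(2)$. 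Granted these two product decompositions, part~a) of the Remark closes the argument for both $J_B$ and $\widetilde J_B$.
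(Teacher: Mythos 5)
Your proposal is correct and follows essentially the same route as the paper: reduce $J_B$ (resp.\ $\widetilde J_B$) to sums of $\pi^e$-multiples of single monomials from $I_B(2)$ and of products of two such monomials, then use the relations $X_k^2=\eta_k\sum_l c_{lk}X_l$ to expand each product in the monomial basis and check that every resulting term stays in the ideal, with the extra factors $\eta_k$ supplying the required $2$-adic depth. The only differences are organizational: the paper handles the overlapping-support case by induction on the number of shared indices (which is what your one-step formula implicitly needs when several indices collide or the new index $l$ re-enters the support), and it takes the monomial description of $I_B(2)$ and $I_B(2)^{loc}$ for granted rather than verifying the decoupling as you propose to do.
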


\begin{proof} 
Any element of $J_B$ is a sum of ``elementary'' elements  of the form 
$o_{\underline{i}_1}X^{\underline{i}_1}o_{\underline{i}_2}X^{\underline{i}_2}$ 
and $\pi ^eo_{\underline{i}_1}X^{\underline{i}_1}$, where 
$o_{\underline{i}_1}X^{\underline{i}_1}, 
o_{\underline{i}_2}X^{\underline{i}_2}\in I_B(2)$. It will be 
sufficient to verify property (\ref{AE3}) for such elementary elements. 

If $a=\pi ^eo_{\underline{i}_1}X^{\underline{i}_1}$ there is nothing to prove. 

Suppose $a=o_{\underline{i}_1}X^{\underline{i}_1}o_{\underline{i}_2}X^{\underline{i}_2}$ 
and $\underline{i}_1=(i_{11},\dots ,i_{u1})$, 
$\underline{i}_2=(i_{12},\dots ,i_{u2})$. Use induction on the number 
$\nu =\nu (a)$ of $1\leqslant j\leqslant u$ such that 
$i_{j1}=i_{j2}=1$. 

If $\nu (a)=0$ then $\underline{i}_1+\underline{i}_2=\underline{i}$ 
and there is nothing to prove. If $\nu (a)\geqslant 1$ and, say, $i_{j1}=i_{j2}=1$ 
use the identity $X_j^2=\eta '_j\cdot \eta _j'\sum _{s}X_sc_{sj}$ 
to rewrite $a$ as a sum of elements with smaller $\nu $'s and, perhaps, 
elements of the form $\pi ^eI_B(2)$. 

The case of the ideal $\widetilde{J}_B$ can be considered similarly. 
\end{proof}

\begin{Prop} \label{P1.13} 
 If $\iota _{\C N}(\C N)=
(\widetilde{N}^0, \widetilde{N}^1,\varphi _1)$ then 
$$\widetilde{N}^0=\{a\operatorname{mod}J_B\ |
\ a\in I_B, \delta ^+(a)\in J_{B\otimes B}\}$$
\end{Prop}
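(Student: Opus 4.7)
The plan is to identify both sides of the claimed equality as subsets of $I_B/J_B$ and to prove the two inclusions separately. By the construction of $\iota_{\c M}$ in Subsection~\ref{S1.5} (applied to $\c N$), the map $\iota_{\c N}$ sends $\bar n^0$ to $\bar X\operatorname{mod}J_B$; hence $\wt N^0$ is the $O$-submodule of $I_B/J_B$ generated by the classes of $X_1,\dots,X_u$. This interpretation is legitimate because $2I_B\subset J_B$: one has $2=\pi^{2e}u$ with $u\in O^*$, so for any $b\in I_B$, $(\pi^eb)^2=\pi^{2e}b^2\in 2I_B$, giving $\pi^eI_B\subset I_B(2)$ and then $2I_B\subset\pi^eI_B(2)\subset J_B$.

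The inclusion $\wt N^0\subset\{a\operatorname{mod}J_B\ |\ a\in I_B,\ \delta^+(a)\in J_{B\otimes B}\}$ is immediate from the explicit coaddition in Subsection~\ref{S1.9.1}: we have $\Delta(X_i)=X_i\otimes 1+1\otimes X_i+j_i$ with all $j_i\in J_{B\otimes B}$, so $\delta^+(X_i)=j_i\in J_{B\otimes B}$, and $O$-linearity propagates this.

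For the reverse inclusion, expand $a=\sum_{\underline{i}}o_{\underline{i}}X^{\underline{i}}$ in the monomial basis \eqref{E1.2}. Since each $j_j\in J_{B\otimes B}$ and $J_{B\otimes B}$ is an ideal, the product expansion
\[
\Delta(X^{\underline{i}})=\prod_{i_j=1}(X_j\otimes 1+1\otimes X_j+j_j)
\]
collapses modulo $J_{B\otimes B}$ to $\sum_{\underline{i}_1+\underline{i}_2=\underline{i}}X^{\underline{i}_1}\otimes X^{\underline{i}_2}$. Assembling these contributions, the hypothesis $\delta^+(a)\in J_{B\otimes B}$ rewrites as
\[
\sum_{\substack{\underline{i}_1,\underline{i}_2\ne\underline 0\\ \text{disjoint supports}}}o_{\underline{i}_1+\underline{i}_2}\,X^{\underline{i}_1}\otimes X^{\underline{i}_2}\ \in\ J_{B\otimes B}.
\]

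The main obstacle is passing from this aggregated relation to monomial-wise control. Here I would argue that Proposition~\ref{PA1} transfers verbatim to $B\otimes B$ --- the analogous $O$-basis $\{X^{\underline{i}_1}\otimes X^{\underline{i}_2}\}$ exists and the reduction $X_j^2=\eta_j\sum_sX_sc_{sj}$ can be applied in either tensor factor --- so $J_{B\otimes B}$ also satisfies property~\eqref{AE3}. Each elementary term $o_{\underline{i}_1+\underline{i}_2}X^{\underline{i}_1}\otimes X^{\underline{i}_2}$ therefore already lies in $J_{B\otimes B}$. Finally, the multiplication map $\mu\colon B\otimes B\to B$ is a morphism in $\Aug_O$, so it sends $I_{B\otimes B}(2)$ into $I_B(2)$ and thus $J_{B\otimes B}$ into $J_B$; applying $\mu$ and using the disjoint-support collapse $X^{\underline{i}_1}X^{\underline{i}_2}=X^{\underline{i}}$ yields $o_{\underline{i}}X^{\underline{i}}\in J_B$ for every $\underline{i}$ with $r(\underline{i})\ge 2$. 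Consequently $a\equiv\sum_jo_{e_j}X_j\operatorname{mod}J_B$, which places $a\operatorname{mod}J_B$ in $\wt N^0$ and completes the proof.
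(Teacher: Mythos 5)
Your proof is correct and follows essentially the same route as the paper: expand $a$ in the monomial basis, observe that $\delta^+(a)$ collapses modulo $J_{B\otimes B}$ to $\sum o_{\underline{i}}X^{\underline{i}_1}\otimes X^{\underline{i}_2}$, invoke Proposition \ref{PA1} (in its $B\otimes B$ form, which the paper also uses) to get monomial-wise membership in $J_{B\otimes B}$, and apply multiplication to land the non-linear terms in $J_B$. The only difference is that you spell out the easy inclusion $\widetilde{N}^0\subset\{a\ |\ \delta^+(a)\in J_{B\otimes B}\}$ and the preliminary fact $2I_B\subset J_B$, which the paper leaves implicit.
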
 

\begin{proof} Suppose 
$a=\sum _{\underline{i}}o_{\underline{i}}X^{\underline{i}}\in I_B$ 
and $\delta ^+a\in J_{B\otimes B}$. Note that    
$$\delta ^+(a)\equiv \sum _{\underline{i}_1+\underline{i}_2=\underline{i}}
o_{\underline{i}}X^{\underline{i}_1}\otimes X^{\underline{i}_2}
\operatorname{mod}J_{B\otimes B}$$
Then Proposition \ref{PA1} implies that all 
$o_{\underline{i}}X^{\underline{i}_1}\otimes X^{\underline{i}_2}\in J_{B\otimes B}$ 
and, therefore, all $o_{\underline{i}}
X^{\underline{i}_1}\cdot X^{\underline{i}_2}\in J_B$. 
This means that all non-linear terms amongst $o_{\underline{i}}X^{\underline{i}}$ 
(i.e. the terms with $r(\underline{i})=i_1+\dots +i_u\geqslant 2$) 
belong to $J_B$. 
\end{proof}

Using that the ideals $J_B$ and $J_{B\otimes B}$ depend functorially on 
the group scheme $\c G_O(\c N)$ (i.e. do not depend on a choice of the 
special construction in Subsection \ref{S1.9}) we obtain the following property.

\begin{Cor} 
\label{C1.13} 
The functor $\C G_O$ is fully faithful. 
\end{Cor}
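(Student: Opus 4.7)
Faithfulness is essentially immediate. By Proposition \ref{P1.11}, the composition of $\c G_O$ with the forgetful functor $\Gr_O^*\to\Aug_O^*$ equals $\Psi^*$, which is faithful by Corollary \ref{C1.9}. Since the forgetful functor is faithful (a morphism of group schemes is determined by its underlying morphism of augmented algebras), the faithfulness of $\c G_O$ follows.

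For fullness, fix $\c M,\c N\in\MF^e_S$ with $\c A=(A,I_A)\in\Aug_O(\c M)$, $\c B=(B,I_B)\in\Aug_O(\c N)$, built as in Subsection \ref{S1.7} and Subsubsection \ref{S1.9.1}. Let $f\in\Hom_{\Gr^*_O}(\c G_O(\c N),\c G_O(\c M))$ and lift $f$ to a genuine morphism of group schemes, so an $O$-Hopf-algebra morphism $F:A\to B$ representing the class $[F]\in\Hom_{\Aug^*_O}(\c A,\c B)$. Via Proposition \ref{P1.7}(d) the class $[F]$ corresponds to a morphism
$$\Theta^*([F])\in\Hom_{\CMF_S}(\iota_{\c M}(\c M),\iota(\c B)),$$
and our task is to show that this morphism factors through the subobject $\iota_{\c N}(\c N)\subset \iota(\c B)$, because then by Proposition \ref{P1.1}(b) (applied to the special $\varphi_1$-lifts $\iota_{\c M}$ and $\iota_{\c N}$ of Proposition \ref{P1.5}) it lifts uniquely in $\MF^{e*}_S$ to an $h\in\Hom_{\MF^{e*}_S}(\c M,\c N)$ with $\c G_O(h)=f$.

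The key step is the factoring, which is where Proposition \ref{P1.13} does the work. By construction in Subsection \ref{S1.7}, if $\bar X$ is the chosen family of generators of $I_A$, then $\Delta_A(\bar X)\equiv \bar X\otimes 1+1\otimes\bar X\pmod{J_{A\otimes A}}$, i.e.\ $\delta^+(X_i)\in J_{A\otimes A}$ for every $i$. Since $F$ is an $O$-algebra map respecting augmentations, $F(I_A(2))\subset I_B(2)$ and consequently $(F\otimes F)(J_{A\otimes A})\subset J_{B\otimes B}$. Because $F$ is additionally a Hopf-algebra morphism, $\Delta_B\circ F=(F\otimes F)\circ\Delta_A$, so
$$\delta^+\bigl(F(X_i)\bigr)=(F\otimes F)\bigl(\delta^+(X_i)\bigr)\in J_{B\otimes B}.$$
By Proposition \ref{P1.13} this means $F(X_i)\bmod J_B\in \widetilde N^0$, so $\Theta^*([F])$ takes values in $\iota_{\c N}(\c N)$, as required. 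The main (and only substantive) obstacle is precisely this intrinsic recognition of $\iota_{\c N}(\c N)$ inside $\iota(\c B)=\iota(\c G_O(\c N))$ supplied by Proposition \ref{P1.13}; with it in hand, the diagram displayed in the introduction commutes and the proof is complete.
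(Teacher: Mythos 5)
Your proof is correct and follows essentially the same route as the paper: both use Proposition \ref{P1.13} to recognize $\iota_{\c N}(\c N)$ intrinsically inside $\iota(\c B)$ via the condition $\delta^+(a)\in J_{B\otimes B}$ (which is preserved by Hopf-algebra morphisms since $(F\otimes F)(J_{A\otimes A})\subset J_{B\otimes B}$), and then lift through the special $\varphi_1$-lifts using Proposition \ref{P1.1}. Your write-up merely spells out the Hopf-compatibility step that the paper leaves implicit.
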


\begin{proof}
 Suppose $G_1=\c G_O(\c M_1)$, 
$G_2=\c G_O(\c M_2)$, $\c M_1,\c M_2\in \MF ^e_S$, 
$g\in \Hom _{\Gr _O}(G_1,G_2)$ and  
$A(g):A_2\To A_1$ is the corresponding morphism of  
$O$-algebras. Then  $\iota (A(g))
\in\Hom _{\CMF _S}(\iota (A_2),\iota (A_1))$ maps 
$\iota _{\c M_2}(\c M_2)$ to $\iota _{\c M_1}(\c M_1)$ 
(use Proposition \ref{P1.13}) and by Proposition \ref{P1.1} 
can be lifted uniquely $F\in\Hom _{\MF ^e_S}(\c M_2,\c M_1)$. 
Clearly, $\c G_O(F)=g$.  
\end{proof}

In Subsection \ref{S3} we need the following version of 
Proposition \ref{P1.13}.  

Let $\bar B=B\otimes_O\bar O$.  
Denote by $\bar{\C J}$ the ideal in $\bar B$ generated by 
$\tilde\eta _iX_i\otimes X_i$, $1\leqslant i\leqslant u_0$,  
and all elements of $2I_{\bar B\otimes\bar B}^{loc}$. One can easily 
prove (use relation \eqref{E1.1}) that all $j_1,\dots ,j_{u_0}$ belong to 
$\bar{\c J}$ and $\pi j_{u_0+1},\dots ,\pi j_u
\in 2\pi I_{B\otimes B}\subset\bar{\c J}$. 

\begin{Prop} \label{P1.14} 
The ideal $\bar{\c J}$ consists of all $\bar O$-linear combinations 
of monomials in $\bar B\otimes\bar B$ which either belong to 
$2I_{\bar B\otimes\bar B}^{loc}$ or are divisible by one of 
$\tilde\eta _iX_i\otimes X_i$, where $1\leqslant i\leqslant u_0$.  
\end{Prop}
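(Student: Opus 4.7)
The goal is to show that $\bar{\c J}$ satisfies an analog of property \eqref{AE3}: its elements, when expanded in the monomial $\bar O$-basis $\{X^{\underline{i}_1}\otimes X^{\underline{i}_2}\}$ of $\bar B\otimes\bar B$, decompose as sums of individual monomials each of which is either in $2I_{\bar B\otimes\bar B}^{loc}$ or divisible by some $\tilde\eta_i X_i\otimes X_i$ with $i\leq u_0$. The plan is to mirror the strategy of Proposition \ref{PA1}, reducing to elementary generators and inducting on the number of collisions produced by multiplication.

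One inclusion is immediate from the definitions. For the reverse, note that $\bar{\c J}$ is the ideal generated by the finite family $\{\tilde\eta_i X_i\otimes X_i\mid 1\leq i\leq u_0\}$ together with the subideal $2I_{\bar B\otimes\bar B}^{loc}$. The latter subideal already consists of $\bar O$-linear combinations of monomials each of which is in $2I_{\bar B\otimes\bar B}^{loc}$, so it suffices to verify the monomial property for an arbitrary elementary product
$$\bar o\,(X^{\underline{j}_1}\otimes X^{\underline{j}_2})\cdot(\tilde\eta_i X_i\otimes X_i)=\bar o\tilde\eta_i\,X^{\underline{j}_1}X_i\otimes X^{\underline{j}_2}X_i.$$
If the $i$-th coordinates of both $\underline{j}_1,\underline{j}_2$ vanish, this is already a single monomial visibly divisible by $\tilde\eta_i X_i\otimes X_i$, and we are done.

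Otherwise a square $X_i^2$ appears on at least one tensor factor, and we rewrite it using $X_i^2=\eta_i\sum_s X_sc_{si}$. Because $i\leq u_0$ we have $\tilde\eta_i\eta_i=-2$, so the coefficient $\tilde\eta_i$ times the square contributes a factor of $-2$. For instance, when only the $\underline{j}_1$-side collides,
$$\bar o\tilde\eta_i\,X^{\underline{j}_1'}X_i^2\otimes X^{\underline{j}_2}X_i=-2\bar o\sum_s c_{si}\,X^{\underline{j}_1'}X_s\otimes X^{\underline{j}_2}X_i,$$
and every term on the right lies in $2I_{\bar B\otimes\bar B}^{loc}$ since the second tensor slot still carries the local factor $X_i$. (When both sides collide, exactly the same rewriting applies and yields an even smaller monomial already in $2I_{\bar B\otimes\bar B}^{loc}$.) Iterating, one continues to eliminate residual squares appearing in $X^{\underline{j}_1'}X_s$ and $X^{\underline{j}_2}X_i$ via the same relation, which either produces further local terms (if a local index $j\leq u_0$ collides) or, for an etale index $j>u_0$, replaces $X_j^2$ with $\eta_j\sum_s X_sc_{sj}$; in the latter case one invokes property \textbf{(C)} to see that the mixed $c_{sj}$ with $s\leq u_0$ are divisible by $\pi$, again keeping things within the claimed description. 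An induction on the total number $\nu$ of $i$-collisions, exactly as in Proposition \ref{PA1}, terminates in finitely many steps.

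The main obstacle is precisely the mixed local/etale bookkeeping in this induction: when the elimination of a square moves an index across the boundary between local and etale coordinates, a priori one could lose both properties (divisibility by a local generator and the factor of $2$ on a local monomial). The key points that make the induction close are the identity $\tilde\eta_i\eta_i=-2$ (which converts every surviving local collision into a factor of $2$) and property \textbf{(C)} (which supplies the extra $\pi$ needed whenever the replacement sends an etale square to a mixed summand involving a local variable). Together these guarantee the monomial property \eqref{AE3} for $\bar{\c J}$, completing the proof.
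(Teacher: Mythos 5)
Your argument is correct, but it runs in the opposite direction to the paper's. The paper takes an arbitrary $a\in\bar{\c J}$, expands it in the canonical monomial basis of $I_{\bar B\otimes\bar B}$, and argues by contradiction: if some monomial of that expansion were neither in $2I^{loc}_{\bar B\otimes\bar B}$ nor divisible by any $\tilde\eta _iX_i\otimes X_i$, one analyses which products $M(i_0,b)=\tilde\eta _{i_0}(X_{i_0}\otimes X_{i_0})b$ could contribute to its coefficient --- if $b$ avoids $X_{i_0}$ in both slots the contribution is divisible by $\tilde\eta _{i_0}$, and otherwise $M(i_0,b)\in 2I^{loc}_{\bar B\otimes\bar B}$ because $\tilde\eta _{i_0}X_{i_0}^2\in 2I_{\bar B}^{loc}$. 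You instead rewrite each generator-times-monomial product forwards into normal form, exactly in the style of Proposition \ref{PA1}. Both proofs turn on the same two facts, $\tilde\eta _i\eta _i=-2$ and property $({\bf C})$, and both yield Corollary \ref{CC}; the paper's version additionally shows that the \emph{canonical} expansion of $a$ has the stated monomial property, whereas yours produces \emph{some} decomposition of the required shape --- which is all that the statement and the corollary actually need. Two small points about your write-up. First, the essential use of $({\bf C})$ is not where you place it: expanding an etale square $X_j^2$ with $j>u_0$ can only introduce local variables into a slot, which helps rather than hurts membership in $I^{loc}$; the place where $({\bf C})$ is genuinely needed is when a \emph{local} square $X_i^2=\eta _i\sum _jX_jc_{ji}$, $i\leqslant u_0$, is expanded in the slot that carries the local property, since its etale summands ($j>u_0$) stay inside $I^{loc}_{\bar B}$ only because $c_{ji}\equiv 0\operatorname{mod}\pi $. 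Second, your induction on residual squares can be short-circuited: after the first collision is rewritten, the term is $-2$ times an element visibly lying in $I^{loc}_{\bar B\otimes\bar B}$ (one tensor slot is still a multiple of the local $X_i$), and since $I^{loc}$ satisfies property \eqref{AE3} its canonical expansion automatically consists of monomials in $2I^{loc}_{\bar B\otimes\bar B}$, with no further case analysis.
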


\begin{proof} Suppose an element $a\in\bar{\c J}$ 
is an $\bar O$-linear combination of the form 
\begin{equation}\label{EE}
 a=\sum _{\underline{i}}o'_{\underline{i}}
(X^{\underline{i}}\otimes 1)+\sum _{\underline{i}}
o_{\underline{i}}''(1\otimes X^{\underline{i}})+
\sum _{\underline{i}_1,\underline{i}_2}o_{\underline{i}_1\underline{i}_2}
X^{\underline{i}_1}\otimes X^{\underline{i}_2}
\end{equation}
Let $M(i,b)=\tilde\eta _i(X_i\otimes X_i)b$, where $1\leqslant i\leqslant u_0$ 
and $b$ is a monomial from $\bar B\otimes\bar B$. 

Clearly, any $M(i,b)$ can't contribute to the coefficients of the first two 
sums in \eqref{EE}. Therefore, their summands belong 
to $2I_{\bar B\otimes\bar B}^{loc}$. 

Suppose $o_{\underline{i}_1\underline{i}_2}
X^{\underline{i}_1}\otimes X^{\underline{i}_2}
\notin 2I_{\bar B\otimes\bar B}^{loc}$ and satisfies the following condition
(The proposition is proved if there are no such monomials.): 
\medskip 

{\it if $X^{\underline{i}_1}\otimes X^{\underline{i}_2}$ is divisible by 
$X_i\otimes X_i$, $1\leqslant i\leqslant u_0$, then 
$o_{\underline{i}_1\underline{i}_2}$ is not divisible by $\tilde\eta _i$.}
\medskip 

Then there is $M(i_0,b)$ with $1\leqslant i_0\leqslant u_0$, 
which contributes to the coefficient 
for $X^{\underline{i}_1}\otimes X^{\underline{i}_2}$ and this contribution 
divides $o_{\underline{i}_1\underline{i}_2}$. 

If $b$ is not divisible by either 
$X_{i_0}\otimes 1$ or $1\otimes X_{i_0}$ then 
$M(i_0,b)=oX^{\underline{i}_1}\otimes X^{\underline{i}_2}$ and $o\in\bar O$ 
is divisible by 
$\tilde\eta _{i_0}$. The contradiction. 
But otherwise, $M(i_0,b)\in 2I_B^{loc}$ 
because $\tilde\eta _{i_0}X_{i_0}^2\in 2I_B^{loc}$. 
The proposition is proved. 
 \end{proof}

\begin{Cor}\label{CC}
The ideal $\bar{\c J}$ satisfies condition \eqref{AE3}. 
\end{Cor}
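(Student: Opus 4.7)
The plan is to derive this as an immediate consequence of Proposition \ref{P1.14}. Given any $a \in \bar{\mathcal{J}}$, I would first write it in its unique standard basis expansion
$$a = \sum_{\underline{i}_1, \underline{i}_2} o_{\underline{i}_1 \underline{i}_2} X^{\underline{i}_1} \otimes X^{\underline{i}_2}.$$
The crucial observation is that the proof of Proposition \ref{P1.14} actually establishes a monomial-by-monomial statement: every individual summand $o_{\underline{i}_1 \underline{i}_2} X^{\underline{i}_1} \otimes X^{\underline{i}_2}$ falls into one of two ``good'' categories, namely either the summand lies in $2 I^{loc}_{\bar{B} \otimes \bar{B}}$, or else $X^{\underline{i}_1} \otimes X^{\underline{i}_2}$ is divisible by $X_i \otimes X_i$ for some $1 \leqslant i \leqslant u_0$ and the coefficient $o_{\underline{i}_1 \underline{i}_2}$ is divisible by $\tilde{\eta}_i$.

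Next I would observe that both good types lie in $\bar{\mathcal{J}}$ by the very construction of the latter: summands of the first type lie in $2 I^{loc}_{\bar{B} \otimes \bar{B}} \subset \bar{\mathcal{J}}$, while those of the second type are $\bar{O}$-multiples of the generator $\tilde{\eta}_i X_i \otimes X_i \in \bar{\mathcal{J}}$. Hence every standard-basis summand of $a$ is itself an element of $\bar{\mathcal{J}}$, which is the forward direction of condition \eqref{AE3}; the reverse implication is trivial since $\bar{\mathcal{J}}$ is an ideal and in particular closed under sums.

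I do not anticipate any real obstacle here, as Proposition \ref{P1.14} does the substantive work; the corollary simply repackages that characterization in the monomial-by-monomial language of condition \eqref{AE3}. The situation parallels Proposition \ref{PA1}, which established the analogous property for $J_B$ and $\widetilde{J}_B$ through a more hands-on monomial manipulation using the relations $X_i^2 = \eta_i \sum_j X_j c_{ji}$ to reduce the index $\nu$ of repeated variables; here no such reduction is needed since Proposition \ref{P1.14} already provides the desired monomial-level description.
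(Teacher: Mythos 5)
Your proof is correct and is exactly the intended argument: the paper states Corollary \ref{CC} without proof precisely because it is an immediate consequence of Proposition \ref{P1.14} (plus the uniqueness of the standard-basis expansion and the fact that both types of ``good'' monomials lie in $\bar{\c J}$). Nothing further is needed.
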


\medskip 
\medskip

\section {Functor $\C G_{O_0}^O$} \label{S2} 

In this section we prove that any $G\in\Gr _O$ from the image 
of $\C G_O$ has a canonical descent to $O_0$, $G_0\in\Gr _{O_0}$. 
Therefore, the fully faithful functor $\C G_O$ appears as the composition of 
the fully faithful functor $\C G_{O_0}^O:\MF^{e*}_S\To\Gr ^*_{O_0}$ 
and the extension of scalars $\Gr _{O_0}^*\To \Gr ^*_O$.

\subsection{Uniqueness of descent to $O_0$} \label{S2.1} 

\begin{Prop} \label{P2.1} 
{\rm a)}  Suppose $G=\Spec A\in\operatorname{Im}\C G_O$, 
 $e:A\To O$ is the counit and $I_A=\Ker\,e$. If there is an 
$(A_0, I_{A_0})\in\Aug _{O_0}$ such that $(A_0, I_{A_0})
\otimes _{O_0}O=(A,I_A)$ then 
$G_0=\Spec A_0$ has a natural structure of object of the category 
$\Gr _{O_0}$ such that $G_0\otimes _{O_0}O=G$.  

{\rm b)} Suppose $G_0, H_0\in\Gr _{O_0}$ and 
$G=G_0\otimes _{O_0}O$ and $H=H_0\otimes _{O_0}O$ 
are in the image of $\C G_O$. Then the natural map 
$f\mapsto f\otimes _{O_0}O$ induces identification 
$\Hom _{\Gr _{O_0}}(G_0, H_0)=\Hom _{\Gr _O}(G, H)$.
\end{Prop}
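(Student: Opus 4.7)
The plan is Galois descent along $\Gamma =\Gal(K/K_0)=\{1,\tau \}$ with $\tau (\pi )=-\pi $. In (a) the hypothesis equips $A$ with a $\Gamma $-action fixing $A_0$, and similarly in (b) the algebras of $G$ and $H$ carry $\Gamma $-actions fixing the algebras of $G_0$ and $H_0$; in both parts the problem reduces to producing $\Gamma $-equivariant data.

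For (a), write $G=\c G_O(\c M)$ for some $\c M\in\MF ^e_S$ and let $\c A=(A,I_A)\in\Aug _O(\c M)$. By Proposition \ref{P1.10} the set $\C L_{\c A}$ of compatible coadditions is a nonempty torsor under the group of augmented $O$-algebra automorphisms of $\c A$ congruent to $\id $ modulo $\widetilde J_A$. First I would verify that $\Gamma $ acts on $\C L_{\c A}$ via $\Delta \mapsto (\tau \otimes \tau )\circ \Delta \circ \tau ^{-1}$: the twist is $O$-linear (by a standard computation using that $\tau $ is an $O_0$-algebra automorphism of $A$), preserves the coalgebra axioms and the relation $2\id _G=0$, and represents the class $\Delta ^*_{\c A}=\Psi ^*(\bigtriangledown )$ because $J_A,\widetilde J_A$ are intrinsic and the diagonal $\bigtriangledown $ in $\MF ^{e*}_S$ is ring-independent. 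Then I would exhibit a $\Gamma $-fixed element. The explicit description from the proof of Proposition \ref{P1.10} parametrizes the acting group by vectors $\bar\gamma $ in $I_{A^{et}}$ modulo $\pi I_{A^{et}}$, exhibiting it as intrinsic to the etale subobject $\c A^{et}$; since every etale object in $\Gr _R$ is a product of copies of $(\Z /2)_R$, the pair $(A^{et},I_{A^{et}})$ admits a canonical descent to $O_0$, under which the $\Gamma $-action on the acting group becomes the tautological Galois action. The corresponding $H^1(\Gamma ,-)$ vanishes by direct calculation, yielding a $\Gamma $-invariant $\Delta $ which descends to $\Delta _0$ on $A_0$. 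Together with the counit induced by $I_{A_0}$ this gives $G_0$ an object of $\Gr _{O_0}$, with $2\id _{G_0}=0$ inherited from $2\id _G=0$ by faithfully flat descent.

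For (b), injectivity of $f\mapsto f\otimes _{O_0}O$ is immediate from faithful flatness. For surjectivity I would decompose any $g\in\Hom _{\Gr _O}(G,H)$ using the exact sequence
$$0\To \C R(G,H)\To \Hom _{\Gr _O}(G,H)\To \Hom _{\Gr ^*_O}(G,H)\To 0.$$
Corollary \ref{C1.13} identifies the quotient with $\Hom _{\MF ^{e*}_S}(\c M _H,\c M _G)$, where $\c M_G,\c M_H$ are the filtered modules attached to $G,H$; this Hom is defined over $S$ independently of the base ring, so every class has a canonical $\Gamma $-equivariant lift. The subgroup $\C R(G,H)=\Hom _{\Gr _O}(G^{et},H^{mult})$ is built from the canonically descending etale and multiplicative parts of $G$ and $H$, and hence coincides with $\C R(G_0,H_0)$. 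Combining the two yields the claimed bijection $\Hom _{\Gr _{O_0}}(G_0,H_0)=\Hom _{\Gr _O}(G,H)$.

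The main obstacle is the fixed-point claim in (a): because $|\Gamma |=2$ equals the residue characteristic, standard averaging techniques are unavailable, and the argument depends essentially on the rigidity provided by $G\in\operatorname{Im}\c G_O$ — namely that the torsor $\C L_{\c A}$'s structure group is intrinsic to the etale part $\c A^{et}$, whose canonical descent to $O_0$ trivializes the obstructing Galois cohomology.
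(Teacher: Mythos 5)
There is a genuine gap, and it sits exactly where you yourself locate it: the fixed-point claim in part (a). You model $\C L_{\C A}$ as a torsor under a group which, by your own description, is parametrized by a $k$-vector space attached to the \'etale part; since $K/K_0$ is totally ramified, $\Gamma =\Gal (K/K_0)\simeq \Z /2$ acts \emph{trivially} on that vector space (the canonical descent of $\C A^{et}$ to $O_0$ only confirms the triviality, it does not change the cohomology). But for a trivial $\Gamma $-module $V$ killed by $2$ one has $H^1(\Gamma ,V)=\Hom (\Z /2,V)=V$, which is nonzero whenever $V\ne 0$. So the ``direct calculation'' you appeal to cannot give vanishing, and no other mechanism is offered; as you note, averaging is unavailable precisely because $|\Gamma |=2$ equals the residue characteristic. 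Moreover, even if a $\Gamma $-fixed element of $\C L_{\C A}$ existed, you would have descended a coaddition that agrees with the given $\Delta $ only up to an automorphism of $\C A$, i.e.\ you would descend a group scheme isomorphic to $G$ in $\Gr ^*_O$ rather than prove, as the statement requires (and as Lemma \ref{L2.3} later uses), that the given $\Delta $ satisfies $\Delta (I_{A_0})\subset I_{A_0\otimes A_0}$.

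The paper's argument shows that the relevant cocycle is identically zero, not merely a coboundary. Since $\tau (\pi )=-\pi $, every $\alpha \in O$ satisfies $\tau (\alpha )\equiv \alpha \operatorname{mod}2\pi O$, whence the conjugate coaddition $\Delta ^{(\tau )}=(\tau \otimes \tau )\circ \Delta \circ \tau $ is congruent to $\Delta $ modulo $\widetilde{J}_{A\otimes A}$; the rigidity from Subsection \ref{S1.7} (an element of $\C L_{\C A}$ is determined by its residue modulo $\widetilde{J}_{A\otimes A}$) then forces $\Delta ^{(\tau )}=\Delta $ exactly, so all structure constants lie in $O_0$. This congruence-plus-rigidity mechanism is what your proposal is missing, and it is also what makes part (b) work: one conjugates a given $g\in \Hom _{\Gr _O}(G,H)$ by $\tau $ and applies the same rigidity, whereas your route through the exact sequence with $\C R(G,H)$ and $\Hom _{\Gr ^*_O}(G,H)$ founders on the unjustified claim that classes in the quotient admit canonical $\Gamma $-equivariant lifts (lifts are canonical only up to $\C R(G,H)$, and reassembling a sub and a quotient into the middle term is itself an $H^1$-type problem of the same kind).
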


\begin{proof} It will be sufficient to prove that $\Delta (I_{A_0})\subset 
 I_{A_0\otimes A_0}$, where $\Delta $ is the coaddition on $A$. 

Let $\Gal (K/K_0)=\{\id , \tau \}$ and $\Delta ^{(\tau )}
=(\tau\otimes \tau )\circ\Delta\circ \tau $ is the conjugate to $\Delta $. 
In other words, if $b_1,\dots ,b_u$ is an $O_0$-basis of $I_{A_0}$ 
and for $1\le i\le u$, $\Delta (b_i)=b_i\otimes 1+1\otimes b_i+\sum _{k,l}
\alpha _{kl}b_k\otimes b_l$ with all $\alpha _{kl}\in O$, then 
$\Delta ^{(\tau )}(b_i)=b_i\otimes 1+1\otimes b_i+\sum _{k,l}
\tau (\alpha _{kl})b_k\otimes b_l$. Using that all 
$\tau (\alpha _{kl})\equiv \alpha _{kl}\operatorname{mod}2\pi O$, 
we conclude that for any $a\in I_A$, 
$\Delta ^{(\tau )}(a)\equiv\Delta (a)\operatorname{mod}\widetilde{J}_{A\otimes A}$. 
Therefore, by results of Subsection \ref{S1.7}, 
$\Delta =\Delta ^{(\tau )}$ and all $\alpha _{kl}\in O_0$. 
Part a) is proved. 

Part b) follows by similar arguments. 
\end{proof}

\subsection { Existence of descent to $O_0$} 
\label{S2.2} 

\begin{Prop} 
\label{P2.2} 
Suppose $G=\Spec A\in\operatorname{Im}\,\c G_O$ 
and $I_A=\Ker \,e$, where $e:A\To O$ is the counit. 
 Then there is an $(A_0,I_{A_0})\in\Aug _{O_0}$ such that 
$I_A=I_{A_0}\otimes _{O_0}O$. 
\end{Prop}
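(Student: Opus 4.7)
The strategy is to equip $A$ with a $\tau$-semilinear $O$-algebra automorphism $\tilde\tau$ of order $2$, where $\tau\in\Gal(K/K_0)$ is the nontrivial element, and then take $A_0:=A^{\tilde\tau}$.

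Fix $\c M\in\MF^e_S$ with $\c G_O(\c M)=G$ and use the special construction of Subsection \ref{S1.9.1} to present $A=O[\bar X]/(X_i^2-\eta_i\sum_jX_jc_{ji})$, where $(C=(c_{ij}),\eta_i)$ comes from lifting $U\in M_u(S)/t^{2e}$ to $M_u(O)$. Crucially, $\c M$ lives over $S=k[[t]]$ and carries no $\tau$-action, while only the lift $C$ involves $\pi$. Applying $\tau$ coefficientwise to $(C,\eta_i)$ produces another valid lift of the same $U\operatorname{mod}t^{2e}$ (since $\tau\equiv\id\operatorname{mod}2$ on $O$), yielding a second element $\tau^*\c A$ of $\Aug_O(\c M)$. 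The map $X_i\mapsto X_i$ is a $\tau$-semilinear isomorphism $\c A\to\tau^*\c A$ of augmented $O$-algebras modulo $\widetilde J_A$, because $\tau(o)\equiv o\operatorname{mod}2\pi O$ for every $o\in O$. Applying Lemma \ref{L1.8}(a) with $\c B_2=\tau^*\c A$ and $\bar b^0=\bar X$ lifts this to a bona fide $\tau$-semilinear augmented $O$-algebra isomorphism $\tilde\tau\colon A\to A$.

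Next, one arranges $\tilde\tau^2=\id_A$. By construction, $\tilde\tau^2$ is an $O$-linear augmented automorphism congruent to $\id_A$ modulo $\widetilde J_A$; in particular, its class in $\Aug_O^*$ equals that of $\id_A$. By Proposition \ref{P1.7}(c), the group $\Hom_{\MF^e_S}(\c M^{mult},\c E(\c A^{et}))$ acts strictly on this $R$-equivalence class, and a cocycle calculation analogous to the adjustment in the proof of Proposition \ref{P1.10} lets one modify $\tilde\tau$ within its $R$-class so that $\tilde\tau^2=\id$ exactly.

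Finally, set $A_0:=A^{\tilde\tau}$ and $I_{A_0}:=I_A\cap A_0$. Since $\tilde\tau(X_i)\equiv X_i\operatorname{mod}\widetilde J_A$, one can inductively find $\tilde\tau$-invariant lifts of each element of the $O$-basis $\{X^{\underline i}\}$ of $A$, so that $A=A_0+\pi A_0$ and $A_0$ is $O_0$-flat of rank $2^u$ with $A_0\otimes_{O_0}O=A$. Combined with Proposition \ref{P2.1}(a), this equips $G_0:=\Spec A_0$ with the desired $\Gr_{O_0}$-structure, completing the proof. The main obstacle is the order-$2$ adjustment: since $K/K_0$ is wildly ramified, one cannot invoke a tame-descent argument and must instead exhibit the exact identity $\tilde\tau^2=\id$ by a careful cocycle manipulation internal to $\Aug_O^*$.
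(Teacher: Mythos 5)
Your approach is genuinely different from the paper's, and it has two gaps that I do not see how to close. The paper proves Proposition \ref{P2.2} by induction on $|G|$: after a tame base change (Lemma \ref{L2.3}) it writes $A=B[Y]$ with $B$ descending to $B_0$ by the inductive hypothesis, and then uses the Lubin--Tate logarithm to replace the Kummer generator $Y$ by a new generator $Y_2$ whose quadratic equation over $B$ has coefficients in $B_0$, so that $A_0=B_0[Y_2]$ is produced \emph{explicitly}. You instead try to run Galois descent along $K/K_0$ by building a $\tau$-semilinear involution $\tilde\tau$ of $A$ and taking invariants. The construction of \emph{some} semilinear augmented-algebra automorphism $\tilde\tau$ with $\tilde\tau(\bar X)\equiv\bar X\operatorname{mod}\widetilde J_A$ via Lemma \ref{L1.8} is fine (one checks $2\pi I_A\subset J_A$, so $\bar X$ is an admissible $\bar b^0$ for $\tau^*\c A$). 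The problems begin afterwards.

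First, the order-two adjustment is asserted, not proved. By Proposition \ref{P1.7}c) you know $\tilde\tau^2=g_0$ for a unique $g_0$ in the $2$-torsion group $\C G:=\Hom_{\MF^e_S}(\C M^{mult},\C E(\C A^{et}))$, and replacing $\tilde\tau$ by $\tilde\tau\cdot g$ changes $g_0$ to $g_0+(g+g^{\tilde\tau})$. So the obstruction lives in $\Ker(\id+\sigma)/\operatorname{Im}(\id+\sigma)$, where $\sigma$ is the conjugation involution on the $\F_2$-vector space $\C G$; this Tate-cohomology group is nonzero in general (if $\sigma=\id$ the image is $2\C G=0$), and nothing in your argument, nor any analogue of the Hochschild-cocycle computation in Proposition \ref{P1.10}, shows the obstruction vanishes. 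Second, and more seriously, even granting $\tilde\tau^2=\id$, fixed points do not give effective descent along the \emph{wildly ramified} quadratic extension $O/O_0$: e.g.\ the free rank-one module $Ov$ with $\tilde\tau(ov)=-\tau(o)v$ is a semilinear involution whose invariants are $\pi O_0v$, and $\pi O_0v\otimes_{O_0}O=\pi Ov\ne Ov$. Your inductive construction of invariant lifts requires solving $x-\tilde\tau(x)=w$ for $w\in\widetilde J_A$, but the image of $\id-\tilde\tau$ on coefficients is only $2\pi O_0$ (since $\tau(a+b\pi)-(a+b\pi)=-2b\pi$), plus terms in $\widetilde J_AI_A$; this is far smaller than $\widetilde J_A$ (which contains, e.g., $\pi^eI_A(2)^{loc}$), so the equation is not solvable in general. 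In other words, you must show that the descent datum $\tilde\tau$ is \emph{trivializable} as a class in $H^1(\Gal(K/K_0),\Aut_O(A))$, and this is precisely the hard content that the paper's Lubin--Tate manipulation of the generator $Y$ supplies and that your argument omits.
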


\begin{proof} Use induction on the order $|G|$ of $G$. 
 
\subsubsection{The case $|G|=2$} \label{S2.2.1} 

Here $A=O[X]$, where $X^2=\eta cX$ with 
$\eta \in O_0$, $\eta |2$ and $c\in O^*$. 
Clearly, we can take $A_0=O_0[c^{-1}X]$. 

\subsubsection{Tame descent} 
\label{S2.2.2} 

Suppose $K_0'$ is a tamely ramified extension of $K_0$ of degree $e_0$. Let 
$\pi _0'$ be a uniformising element of $K_0'$
such that $\pi _0^{\prime e_0}=\pi _0$. 
Let $K'=K_0'(\pi ')$, where $\pi ^{\prime 2}=\pi _0'$. We can assume that 
$\pi ^{\prime e_0}=\pi $. The field extensions 
$K_0'/K_0$ and $K'/K$ are Galois, 
their Galois groups are cyclic of order $e_0$ and can be 
naturally identified. Denote by 
$O_0'$ and $O'$ the valuation rings of $K_0'$ and, resp., $K'$. 

\begin{Lem} 
\label{L2.3} 
Let $G\in \Gr _O$ and $G'=G\otimes _OO'\in\Gr _{O'}$. Then:
\newline 
{\rm a)} $G$ is in the image of $\C G_O$ if and only if $G'$ 
is in the image of $\C G_{O'}$; 
\newline 
{\rm b)} $G$ admits a descent to $O_0$ if and only 
if $G'$ admits a descent to $O_0'$. 
\end{Lem}

\begin{proof} 
The proof is based on an application of the 
criterion of tamely ramified descent. In the case of $O'$-algebras 
this criterion can be stated as follows:

\ \ 

$\bullet $\ {\it Suppose $A'$ is a flat $O'$-algebra and
 $\tau $ is a generator of $\Gal (K'/K)$. Then the 
existence 
of a 
flat $O$-algebra $A$ 
such that $A'=A\otimes _OO'$ is equivalent to the existence of 
a $\tau $-linear automorphism $f$ of $A'$ such that 
$f^{e_0}=\id _{A'}$ and $f\otimes _Ok=\id _{A'\otimes k}$. }

\ \ 

Then one can state a similar criterion for objects of $\MF ^{e_0e}_{S'}$ 
where $S'$ is a tamely ramified extension of $S$ of degree $e_0$ and 
deduce part a) from the fact that tame descent data for $G'=\C G_{O'}(\C M')$ 
induce tame descent data for $\C M'$. Similarly, part b) can be proved 
from the fact that tamely ramified descent data for $G'$ 
induce tamely ramified descent data for $G'_0$ (use the uniqueness of 
$G'_0$ given by Proposition \ref{P2.1}). Cf. for more detailed explanation in 
\cite{Ab}, Proposition 4.3. 
\end{proof}

\subsubsection{Lubin-Tate formal group law} 
\label{S2.2.3}

Consider the formal Lubin-Tate group law with the logarithm 
$$l_{LT}(X)=
X+X^2/2+\dots +X^{2^n}/2^n+\dots \in\Q _2[[X]]$$
 This means that for any $2$-adic 
ring $R$ its topological nilradical $\m (R)$ is provided 
with the structure of abelian group via the operation 
$[f]+[g]=[P(f,g)]$, 
where $f,g\in\m (R)$ and for indeterminants $X,Y$, 
$$P(X,Y)=l_{LT}^{-1}(l_{LT}(X)+l_{LT}(Y))\in\Z _2[[X,Y]].$$

We have the following simple properties:
\medskip  

$\bullet $\ $[f]+[g]=[P_0(f,g)]+\dots +[P_n(f,g)]+\dots $, where all 
$P_n\in\Z _2[X,Y]$ and $\deg P_n=2^n$. In particular, 
$P_0=X+Y$, $P_1=-XY$, $P_2=-XY(X+Y)^2$;
\medskip 

$\bullet $\ $-[f]=[-f]+[-f^2]+\dots +[-f^{2^n}]+\dots $;
\medskip 

$\bullet $\ $[2](f)=[f]+[f]\equiv 
[2f]+[f^2]+[-2f^2]+\dots +[-2f^{2^n}]+\dots \operatorname{mod}4\m (R).$
\medskip 

\subsubsection {The case $|G|>2$} \label{S2.2.4} 

By replacing $O$ by its suitable tamely ramified extension 
we can assume that $G=\C G_O(\C M)$, where 
$\C M=(M^0, M^1,\varphi _1)\in\MF ^e_S$ is such that for $u\geqslant 1$:
\medskip 

$\bullet $\ \ there is an $S$-basis $m^1,n_1^1,
\dots ,n_u^1$ of $M^1$ and an $S$-basis 
$m,n_1,\dots ,n_u$ of $M^0$ such that $\varphi _1(m^1)=m$ and 
for $1\leqslant i\leqslant u$, 
there are $\tilde s_i\in S$, $\tilde s_i|t^e$ such that 
$n_i^1=\tilde s_in_i$, $\varphi _1(n_1^1)=\sum _jn_ju_{ji}$, 
where $(u_{ij})\in\GL _u(S)$;
\medskip 

$\bullet $\ \ there is an $\tilde s\in S$, $\tilde s|t^e$ such that 
$m^1=\tilde sm+\sum _i\alpha _in_i$, where for 
$1\leqslant i\leqslant u$, the coefficients 
$\alpha _i\in S$ and $t^e\tilde s^{-1}\alpha _i
\equiv 0\operatorname{mod}\tilde s_i$.

\ \ 

The above conditions simply mean that 
$\C N=(N^0, N^1,\varphi _1)\in\MF ^e_S$, where 
$N^0=\sum _iSn_i$, $N^1=\sum _iSn^1_i$ and there is a short exact sequence 
in $\MF ^e_S$ 
$$0\To \C N\To \C M\To \C M_{\tilde s}\To 0$$
where $\c M_{\tilde s}=(S\tilde m, S\tilde m^1,\varphi _1)$ with 
$\tilde m^1=\tilde s\tilde m$ and $\varphi _1(\tilde m^1)=\tilde m$. 
We shall always assume that the above data for the structure of 
$\C N\in\MF ^e_S$ satisfy assumptions $C1$ and $C2$ from 
Subsection \ref{S1.4}.

Note that in the above description of 
$\C M\in\Ext _{\MF ^e_S}(\c M_{\tilde s},\C N)$ we can replace 
$m$ by $m'=m+v$ and $m^1$ by $m^{1\prime }=m^1+v^1$, where $v\in N^0$, $v^1\in N^1$ and 
$\varphi _1(v^1)=v$. Then 
$m^{1\prime }=\tilde sm'+\sum _i\alpha '_in_i$, where 
$$\sum _i\alpha '_in_i=\sum _i\alpha _in_i+v^1-\tilde s\varphi _1(v^1).$$
In particular, if $\tilde s\in S^*$ we can always assume that 
\begin{equation} \label{aE1}
\sum _i\alpha _in_i\in N^1+tN^0
\end{equation}  

In terms of the corresponding $O$-algebras we have: 

\ \ 

$\bullet $\ $A=A(G)$ 
contains the $O$-algebra $B=O[X_1,\dots ,X_u]$, where 
 $X_i^2=\eta _i\sum _jX_jc_{ji}$ with 
$\eta _i=-2/\tilde\eta _i^{\prime 2}\in O_0$ and  
$\tilde\eta _i'\operatorname{mod}2=\kappa _{SO}
(\tilde s_i\operatorname{mod}t^{2e})$,  $1\leqslant i\leqslant u$,   
and $C=(c_{ij})\in\GL _u(O)$; 
\medskip 

$\bullet $\ $A=B[Y]$, where $(\tilde\eta 'Y+\sum _ir_iX_i)^2+2Y=0$ or, equivalently,  
\begin{equation} \label{aE2}
 \left (Y+\tilde\eta ^{\prime -1}\sum _ir_iX_i\right )^2-\eta Y=0
\end{equation} 
with $\tilde\eta '\operatorname{mod}2=\kappa _{SO}(\tilde s\operatorname{mod}t^{2e})$, 
$r_i\operatorname{mod}2=\kappa _{SO}(\alpha _i\operatorname{mod}t^{2e})$ 
and 
\begin{equation} \label{aE3}
\eta r_i^2\equiv 0\operatorname{mod}\tilde\eta _i\  
\text {(or, equivalently,}\  \eta _ir_i^2\equiv 0\operatorname{mod}\tilde\eta )
\end{equation}  
for all $1\leqslant i\leqslant u$ (this follows from the above congruences 
$t^e\tilde s^{-1}\alpha _i\equiv 0\operatorname{mod}\tilde s_i$). Recall that 
as usually $\eta\tilde\eta =\eta _i\tilde\eta _i=-2$ for all $i$. 
\medskip 

Let $h=\sum _ir_iX_i\in I_B$. Then (\ref{aE3}) implies that $h^2\in\tilde\eta I_B$. 
In particular, if $\tilde\eta\notin O_0^*$ then $h\in I_B^{loc}$. 
If $\tilde\eta\in O_0^*$ then (\ref{aE1}) implies that again $h\in I_B^{loc}$. 
Therefore, $\tilde\eta 'Y\in I_A^{loc}$.

By inductive assumption, there is an augmented flat $O_0$-algebra 
$(B_0, I_{B_0})$ such that $I_B=I_{B_0}\otimes _{O_0}O$. Therefore, 
$h=b_0+\pi b_1$, where $b_0\in I_{B_0}^{loc}$ and $b_1\in I_{B_0}$. 

From now on we use the Lubin-Tate group law. 
Clearly, there is 
$Y'\in I_A$ such that 
$[\tilde\eta Y']=[\tilde\eta Y+\tilde\eta 'h]-
[\tilde\eta 'b_0]-[\tilde\eta '\pi b_1]$. 
If $\tilde\eta\notin O_0^*$ then 
$Y'\equiv Y\operatorname{mod}\pi I_A$ 
and if $\tilde\eta\in O_0^*$ then 
$Y'\equiv Y\operatorname{mod}(YI_A^{loc}+\pi I_B^{loc})$. 
Therefore, $A=B[Y']$. 

The equation for $Y'$ can be found as follows. From (\ref{aE2}) we obtain that 
$(\tilde\eta Y+\tilde\eta 'h)^2=-2\tilde\eta Y$. Then using the 
properties of the Lubin-Tate group law from Subsection \ref{S2.2.3}
we obtain:
$$[2](\tilde\eta Y')=[2(\tilde\eta Y+\tilde\eta 'h)]+[-2\tilde\eta Y]
-([2](\tilde\eta 'b_0)+[2](\tilde\eta '\pi b_1))\equiv $$
$$-[\tilde\eta b_0^2]-[\tilde\eta\pi _0b_1^2]-\sum _{n\geqslant 0}
\left [-2(\tilde\eta b_0^2)^{2^n}\right ]-\sum _{n\geqslant 0}
\left [-2(\tilde\eta \pi _0b_1^2)^{2^n}\right ]\operatorname{mod}4(\tilde\eta I_A)^{loc}$$
Here $(\tilde\eta I_A)^{loc}$ coincides with $\tilde\eta I_A$ if $\tilde\eta\notin O_0^*$ 
and with $I_A^{loc}$, otherwise. 

Notice that $h^2\in\tilde\eta I_B$ and, therefore, the right 
hand side of the above congruence equals 
$\tilde\eta ^2b^*\in (\tilde\eta ^2I_{B_0})^{loc}$. 
Using that $[2](2(\tilde\eta I_A)^{loc})=4(\tilde\eta I_A)^{loc}$ we can replace 
$Y'$ by $Y_1=Y'-2a$ with a suitable $a\in I_A$ such that 
$[2](\tilde\eta Y_1)=\tilde\eta ^2b^*$. 

Finally, if $1+\tilde\eta Y_2=\exp (l_{LT}(\tilde\eta Y_1))$ then 
we still have $A=B[Y_2]$ and 
$(1+\tilde\eta Y_2)^2=\exp (l_{LT}(\tilde\eta ^2b^*))$ implies that 
$Y_2^2-\eta Y_2=b^*_1$ with $b_1^*\in B_0$. So, 
for $A_0=B_0[Y_2]$, $A_0\otimes _{O_0}O=A$. 
\end{proof} 
\medskip 
\medskip

\section{Surjectivity of $\C G_{O_0}^O$} 
\label{S3}

In order to establish that $\C G^O_{O_0}$ is antiequivalence of the categories 
$\MF ^{e*}_S$ and $\Gr ^*_{O_0}$ it remains only to prove the following result.

\begin{Thm} 
\label{T3.1} 
If $G_0\in\Gr _{O_0}$ then there is  
$\C M\in\MF ^e_S$ such that $\C G_O(\C M)\simeq G_0\otimes _{O_0}O$.
\end{Thm}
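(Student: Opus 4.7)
The plan is induction on the order $|G_0| = 2^s$. For $s = 1$, since $k$ is algebraically closed any $G_0 \in \Gr_{O_0}$ of order 2 is isomorphic to either $(\Z/2)_{O_0}$ or $\mu_{2,O_0}$, which are realized as $\C G^O_{O_0}(\C S^{et})$ and $\C G^O_{O_0}(\C S^{mult})$ respectively via the descent constructed in Section \ref{S2}. For the inductive step, after possibly extending $K_0$ by a tamely ramified extension (as permitted by Lemma \ref{L2.3} together with the uniqueness of descent of Proposition \ref{P2.1}), I would present $G_0$ as an extension
$$0 \to H_0 \to G_0 \to G_0/H_0 \to 0$$
in $\Gr_{O_0}$ with $|H_0| = 2^{s-1}$, so that by induction $H := H_0 \otimes_{O_0} O \simeq \C G_O(\C N)$ for some $\C N \in \MF^e_S$ set up as in Subsection \ref{S1.9.1}, with special basis satisfying conditions $\mathop{C1}$ and $\mathop{C2}$.

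With $B = A(H)$ presented as $O[X_1,\dots,X_u]/(X_i^2 - \eta_i \sum_j X_j c_{ji})$, the extension $G \to H$ is classified by Kummer theory: $A(G) = B[Y]$, where $Y$ satisfies a relation of the shape
$$\bigl(Y + \tilde\eta^{\prime -1}\sum_i r_i X_i\bigr)^2 - \eta Y = 0$$
as in Subsection \ref{S2.2.4}. The key input is the strengthened Main Lemma of \cite{Ab} adapted to $p = 2$: it allows one to select the generator $Y$ so that $h := \sum_i r_i X_i$ lies in $I_B^{loc}$ and decomposes as $h = b_0 + \pi b_1$ with $b_0 \in I_{B_0}^{loc}$ for an $O_0$-form $B_0$ of $B$ supplied by the inductive hypothesis (and Proposition \ref{P2.1}). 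Once $h$ is so normalized, the Lubin--Tate group law of Subsection \ref{S2.2.3}, with logarithm $l_{LT}(X) = X + X^2/2 + X^4/4 + \dots$, is used to successively replace $Y$ by $Y'$ with $[\tilde\eta Y'] = [\tilde\eta Y + \tilde\eta' h] - [\tilde\eta' b_0] - [\tilde\eta' \pi b_1]$, then by $Y_1 = Y' - 2a$ for a suitable correction, and finally by $Y_2$ with $1 + \tilde\eta Y_2 = \exp(l_{LT}(\tilde\eta Y_1))$, turning the defining equation into the clean form $Y_2^2 - \eta Y_2 = b_1^* \in B_0$. This exhibits $A(G)$ as the algebra produced by the construction of Subsection \ref{S1.9.1} applied to an extension
$$0 \to \C N \to \C M \to \C M_{\tilde s} \to 0$$
in $\MF^e_S$, and the resulting $\C M$ is the desired preimage, i.e.\ $\C G_O(\C M) \simeq G_0 \otimes_{O_0} O$ in $\Gr^*_O$.

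The main obstacle, and the reason the $p=2$ case requires substantial new work rather than a direct transcription of \cite{Ab}, is the refined Main Lemma. For odd $p$ the ideal $pO$ is strictly contained in the maximal DP-ideal of $O$ whenever $e > 1$, leaving room to work modulo $p^2$ using only the first two terms of $l_{LT}$; for $p=2$ the ideal $2O$ already \emph{is} the maximal DP-ideal and the cubic term $X^4/4$ of the logarithm becomes relevant in the expansion of $[2](\tilde\eta Y')$, together with its further iterates $-2(\tilde\eta b_0^2)^{2^n}$ appearing in Subsection \ref{S2.2.4}. This forces one to track the Kummer generator $Y$ modulo the smaller ideal $\widetilde{J}_B$ rather than $J_B$, leaning on the monomial-compatibility of $\bar{\C J}$ established in Proposition \ref{P1.14} and Corollary \ref{CC} to ensure the Lubin--Tate shifts remain controlled, and to arrange the descent $B_0$ in a way that is stable under these successive corrections. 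Once this delicate input is in place, the rest of the induction — the existence of the extension class $\C M$ and the verification that the constructed algebra agrees with $\C G_O(\C M)$ in $\Gr^*_O$ — proceeds essentially along the lines of \cite{Ab}.
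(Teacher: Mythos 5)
Your inductive step is not a proof of surjectivity. The chain of substitutions you describe --- $Y\mapsto Y'$ with $[\tilde\eta Y']=[\tilde\eta Y+\tilde\eta'h]-[\tilde\eta'b_0]-[\tilde\eta'\pi b_1]$, then $Y_1=Y'-2a$, then $1+\tilde\eta Y_2=\exp(l_{LT}(\tilde\eta Y_1))$ ending in $Y_2^2-\eta Y_2=b_1^*$ --- is the proof of Proposition \ref{P2.2} (descent to $O_0$ of a group scheme \emph{already known} to be of the form $\C G_O(\C M)$), not of Theorem \ref{T3.1}. In that argument one starts from the explicit equation $(\tilde\eta Y+\tilde\eta'h)^2=-2\tilde\eta Y$ supplied by the construction of $\Aug_O(\C M)$, and the splitting $h=b_0+\pi b_1$ with $b_0\in I_{B_0}^{loc}$ is just $O=O_0\oplus\pi O_0$; it is not the Main Lemma. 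For Theorem \ref{T3.1} the input is the opposite: an arbitrary class in $\Ext_{\Gr_{O_0}}(H_0,\mu_\eta)$, presented \emph{multiplicatively} by Kummer theory via $f\in\C H_{LT}(H_0,\mu_\eta)$ with $[2](f)\in\tilde\eta^2I_{B_0}$ and $\delta_{LT}(f)\in\tilde\eta I_{B_0\otimes B_0}$ (Proposition \ref{P3.2}). The entire difficulty is converting this into the additive datum of a filtered module: the actual Main Lemma (Lemma \ref{L3.3}) writes $f$ as a Lubin--Tate sum of monomial terms $[o'_{i0}X_i]+[o'_{i1}X_i]+[D_iX_i]$ plus $[2g]$; its Elaboration (Proposition \ref{P3.9}) pins down the coefficients $D_i$ and $g$; Proposition \ref{P3.10} shows that the quadratic forms coming from the term $(D_iX_i)^4/4$ of $l_{LT}$ and from $g^2$ cancel, so that $l_{LT}(\tilde f)$ becomes linear in the $X_i$ modulo controlled ideals; and Propositions \ref{P3.11}--\ref{P3.12} supply the integrality statements used afterwards. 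None of this appears in your proposal, so the assertion that ``this exhibits $A(G)$ as the algebra produced by the construction of Subsection \ref{S1.9.1}'' is unsupported --- you have assumed the conclusion that the Kummer generator can be normalized to the shape prescribed by $\Aug_O(\C M)$.

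Three further points. First, you omit the passage through $O'=O[\pi']$, $\pi'^2=\pi$, and $S'=S[t']$: because the coefficients $o'_{i1},o'_{i3}$ lie in $\pi'O$, the filtered module is first constructed as $\C M'\in\MF^{2e}_{S'}$ and only then descended to $S$ via Propositions \ref{P1.3}, \ref{P3.12} and \ref{P3.13}; this descent step is precisely where the hypothesis that $G$ comes from $O_0$ is used, and building $\C M$ directly over $S$ as you propose skips it. Second, your exact sequence $0\to H_0\to G_0\to G_0/H_0\to 0$ points the wrong way for what you then do: you need $H_0$ to be the \emph{quotient} of $G_0$ (so that $B_0=A(H_0)\subset A(G_0)$ and $A(G)=B[Y]$); the paper uses $0\to\mu_\eta\to G_0\to H_0\to 0$. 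Third, the base case is false for $e>1$: the order-$2$ objects of $\Gr_{O_0}$ are the Oort--Tate schemes $\mu_\eta$ for all divisors $\eta$ of $2$, not only $(\Z/2)_{O_0}$ and $\mu_{2,O_0}$; the paper realizes $\mu_\eta$ with $\tilde\eta=\pi_0^r$ as $\C G_O(Sm,St^rm,\varphi_1)$.
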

\medskip 

 The proof will be given in Subsections 3.1-3.11. 
It uses induction on the order $|G_0|$ of $G_0$. 
\medskip

\subsection {The case $|G_0|=2$} \label{S3.1} 

If $|G_0|=2$ then there is  $\eta\in O_0$, $\eta |2$, such that 
$G_0=\mu _{\eta }$, where 
$\mu _{\eta }=\Spec\, O_0[X]$, $X^2=\eta X$, $e(X)=0$ and 
$\Delta (X)=X\otimes 1+1\otimes X+\tilde\eta X\otimes X$ 
with $\eta\tilde\eta =-2$. We can assume that 
$\tilde\eta =\pi _0^r$, $0\leqslant r\leqslant e$ 
(because $\mu _{\eta }\simeq\mu _{\eta '}$ iff $\eta ^{-1}\eta '\in O_0^*$). 
Then for $\C M=(Sm, St^rm,\varphi _1)\in\MF ^e_S$ 
with $\varphi _1(t^rm)=m$, one has $\C G_O(\C M)=\mu _{\eta }\otimes _{O_0}O$.

\subsection{Basic strategy I} \label{S3.2} 

When studying the case $|G_0|>2$ we can replace 
$O_0$ by its tamely ramified extension, cf. Subsection \ref{S2.2.2}. 
In particular, we can assume 
that in $\Gr _{O_0}$ there is a short exact sequence 
$$0\To \mu _{\eta }\To G_0\To H_0\To 0,$$ 
where $H_0=\Spec B_0$, and $H=H_0\otimes _{O_0}O=\C G_O(\C N)$ with $\C N\in\MF ^e_S$. 

Use the description of $B=B_0\otimes _{O_0}O$ from Subsection \ref{S1.9.1}.  

Namely, $B=O[X_1,\dots ,X_u]$ with the relations 
$X_i^2=\eta _i\sum _jX_jc_{ji}$, 
where all $\eta _i\in O_0$, $\eta _i|2$, 
$C=(c_{ij})\in\GL _u(O)$, $e(X_i)=0$ and 
$j_i=\delta ^+(X_i)=\Delta (X_i)-
X_i\otimes 1-1\otimes X_i\in J_{B\otimes B}$. In addition, if 
$\bar X^{loc}=(X_1,\dots ,X_{u_0})$ then 
$j_1,\dots ,j_{u_0}\in\widetilde{J}_{B\otimes B}$. 
For $1\leqslant i\leqslant u$, the elements 
$\eta _i$ are defined up to units in $O_0$ and we can assume that 
$\tilde\eta _i=\tilde\eta ^{\prime 2}_i=-2/\eta _i$ with $\tilde\eta _i'\in O$. 
Our strategy is to use the explicit construction 
of $H=H_0\otimes _{O_0}O$ from Subsection \ref{S1.9.1}   
to describe $G=G_0\otimes _{O_0}O$ as an element 
of the group $\Ext _{\Gr _O}(H,\mu _{\eta }\otimes _{O_0}O)$. 
If $O'=O[\pi ']$ with $\pi ^{\prime 2}=\pi $, we shall prove 
then that $G'=G\otimes _OO'$ 
appears in the form $\C G_{O'}(\C M')$, where 
$\C M'\in\MF ^{2e}_{S'}$ with $S'=S[t']$, $t^{\prime 2}=t$. 

Finally, we shall prove that the fact that $G'$ admits 
a descent $G_0$ to $O_0$ implies that 
$\C M'$ admits a descent to $S$, i.e. 
$\C M'=\C M\otimes _SS'$ with $\C M\in\MF ^e_S$ and, therefore, $G=\C G_O(\C M)$.

\subsection{The group $\Ext _{\Gr _{O_0}}(H_0,\mu _{\eta })$} 
\label{S3.3} 

Let $B'=B\otimes _OO'$, $I_{B'}=I_B\otimes _OO'$. 
Let $\bar O$ be the valution ring of an algebraic closure 
of $K'=K(\pi ')$, $\bar B=B'\otimes _{O'}
\bar O$ and $I_{\bar B}=I_{B'}\otimes _{O'}\bar O$.  

Introduce the (multiplicative) 
group $\c H=\C H(H_0, \mu _{\eta })$ 
of all elements $f\in (1+\tilde\eta I_{\bar B})^{\times }$ 
such that $f^2\in 1+\tilde\eta ^2I_{B_0}$ and $\delta ^{\times }(f)
:=\Delta (f)(f\otimes f)^{-1}\in 1+\tilde\eta I_{B_0\otimes B_0}$. 

Then there is a group epimorphism 
$\Theta :\C H\To \Ext _{\Gr _{O_0}}(H_0,\mu _{\eta })$ attaching 
to $f\in\C H$, the group scheme $\Theta (f)=\Spec A_0\in\Gr _{O_0}$ such that:

$\bullet $\ $A_0=B_0[X]$ where $(1+\tilde\eta X)^2=f^2$;
\medskip 

$\bullet $\ $e(X)=0$ and $\Delta (1+\tilde\eta X)=
(1+\tilde\eta X)\otimes (1+\tilde\eta X)\cdot \delta ^\times f$;
\medskip 

$\bullet $\  $\Ker\Theta =(1+\tilde\eta I_{B_0})^{\times }$. 
\medskip 

This result was proved in a very detailed way in Section 5 of 
\cite{Ab}   
in a more general context of $p$-group schemes, where $p$ is 
any prime number. Note that the case $p=2$ is 
slightly easier to obtain.

If $\tilde\eta\notin O_0^*$ (i.e. if $\mu _{\eta }$ is not multiplicative) 
then obviously  $f\in 1+I_{\bar B}^{loc}$. 
If $\tilde\eta\in O_0^*$ then we can always multiply $f$ by a suitable element from 
$(1+I_{B^{et}})^{\times }\subset (1+I_B)^{\times}$ 
to assume again that $f\in 1+I_{\bar B}^{loc}$. 
This allows us to replace 
in the above description of $\Ext _{\Gr _{O_0}}(H_0,\mu _{\eta })$, 
 the multiplicative group $(1+I_{\bar B})^{\times }$ 
by $\m (I_{\bar B})=I_{\bar B}^{loc}$ with the 
Lubin-Tate group law from Subsection \ref{S2.2.3}. 

More precisely, let $\delta _{LT}:I_{\bar B}^{loc}
\To I_{\bar B\otimes\bar B}^{loc}$ 
and $[2]:I_{\bar B}^{loc}\To I_{\bar B}^{loc}$ be such that 
for any $f\in I_{\bar B}^{loc}$, 

--- $\delta _{LT}(f)=[\Delta (f)]-[f\otimes 1]-[1\otimes f]$;

--- $[2](f)=[f]+[f]$. 

Let $\c H_{LT}=\c H_{LT}(H_0,\mu _{\eta })$ be the subgroup of $I_{\bar B}^{loc}$ 
(with respect to the Lubin-Tate group law) 
consisting of $f\in I_{\bar B}^{loc}$ 
such that $[2](f)\in\tilde\eta ^2I_{B_0}$ and 
$\delta _{LT}(f)\in\tilde\eta I_{B_0\otimes B_0}$. 
Let $\Theta _{LT}:\C H_{LT}\To \Ext _{\Gr _{O_0}}(H_0,\mu _{\eta })$ be such that 
for all $f\in\C H_{LT}$, 
$\Theta _{LT}(f)=\Theta (E(f))$, where $E(X)=\exp (l_{LT}(X))$ 
is the Artin-Hasse exponential. Our description of 
$\Ext _{\Gr _{O_0}}(H_0,\mu _{\eta })$ will be used below in the 
following form. 

\begin{Prop} 
\label{P3.2} 
$\Theta _{LT}$ is a group epimorphism and its kernel equals 
$\C H_{LT}\cap (\tilde\eta I_{B_0})=(\tilde\eta I_{B_0})^{loc}$.
 \end{Prop}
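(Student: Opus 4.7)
The strategy is to transport the already known description of $\c H$ and $\Theta$ through the Artin--Hasse exponential $E(X)=\exp(l_{LT}(X))$. By the Dieudonn\'e--Dwork lemma applied to the Lubin--Tate logarithm $l_{LT}(X)=X+X^2/2+X^4/4+\dots $, one has $E(X)\in\Z _2[[X]]$, and $E$ is an isomorphism of formal groups from the Lubin--Tate law on $I_{\bar B}^{loc}$ to the formal multiplicative law on $1+I_{\bar B}^{loc}$. In particular, $E([f_1]+[f_2])=E(f_1)E(f_2)$, whence
$E([2](f))=E(f)^2$ and $E(\delta _{LT}(f))=\delta ^\times E(f)$ for every $f\in I_{\bar B}^{loc}$.

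First I would verify that $E$ respects the relevant filtrations: since $E(g)\equiv 1+g\operatorname{mod}g^2$ and since $g$ belongs to a topologically nilpotent ideal, one gets by standard induction that $E$ sends $\tilde\eta ^nI_{B_0}^{loc}$ bijectively onto $(1+\tilde\eta ^nI_{B_0}^{loc})^{\times }$ for $n=1,2$, and similarly with $B_0\otimes B_0$ in place of $B_0$. Combined with the previous paragraph, this yields the equivalences
\begin{itemize}
\item $[2](f)\in\tilde\eta ^2I_{B_0}\ \Leftrightarrow\ E(f)^2\in 1+\tilde\eta ^2I_{B_0}$;
\item $\delta _{LT}(f)\in\tilde\eta I_{B_0\otimes B_0}\ \Leftrightarrow\ \delta ^\times E(f)\in 1+\tilde\eta I_{B_0\otimes B_0}$,
\end{itemize}
and therefore $E$ identifies $\c H_{LT}$ with $\c H\cap (1+I_{\bar B}^{loc})$.

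For surjectivity of $\Theta _{LT}$, I would invoke the remark preceding the proposition: whether or not $\tilde\eta\in O_0^*$, every class in $\Ext _{\Gr _{O_0}}(H_0,\mu _{\eta })$ is represented by some $f\in\c H$ lying in $1+I_{\bar B}^{loc}$ (in the multiplicative case one clears the etale part by multiplying by a suitable element of $(1+I_{B^{et}})^{\times }$, which lies in $\Ker\,\Theta $). Hence $\Theta $ restricts to a surjection on $\c H\cap (1+I_{\bar B}^{loc})$, and composing with the bijection $E$ gives surjectivity of $\Theta _{LT}=\Theta\circ E$.

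For the kernel, $\Theta _{LT}(f)=0$ iff $E(f)\in\Ker\,\Theta =(1+\tilde\eta I_{B_0})^{\times }$; since $E(f)\in 1+I_{\bar B}^{loc}$, this is equivalent to $E(f)\in (1+\tilde\eta I_{B_0})^{\times }\cap (1+I_{\bar B}^{loc})=1+(\tilde\eta I_{B_0})^{loc}$, which by the filtration compatibility above is in turn equivalent to $f\in (\tilde\eta I_{B_0})^{loc}$. Since $\c H_{LT}\subset I_{\bar B}^{loc}$ automatically, this is the same as $\c H_{LT}\cap (\tilde\eta I_{B_0})=(\tilde\eta I_{B_0})^{loc}$, as claimed. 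The only delicate step is the integrality/filtration statement for $E$ at $p=2$, where the denominators $2^n$ in $l_{LT}$ must be balanced precisely against the factors of $\tilde\eta $; this is where one uses that elements of $\c H_{LT}$ already lie in the topologically nilpotent ideal $I_{\bar B}^{loc}$.
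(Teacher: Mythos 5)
Your proposal is correct and takes essentially the same route as the paper, which deduces Proposition \ref{P3.2} from the multiplicative description of $\Ext _{\Gr _{O_0}}(H_0,\mu _{\eta })$ via $\Theta $ and $\c H$ (proved in detail in \cite{Ab}, Section 5) by transporting through the Artin--Hasse exponential after the same reduction to representatives in $1+I_{\bar B}^{loc}$. Your write-up simply makes explicit the points the paper leaves implicit, namely that $E$ and its compositional inverse have coefficients in $\Z _2$ and hence identify each relevant ideal $J$ with $1+J$, turning $[2]$ and $\delta _{LT}$ into squaring and $\delta ^{\times }$.
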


\subsection{Main Lemma} \label{S3.4} 

In next subsections we work systematically with the Lubin-Tate 
group law from Subsection \ref{S2.2.3}. We always bear in mind 
the following agreement: if, say,  
$a\in I_B$ appears in the form $[a]$ then $a$ is assumed 
automatically to be an element of $I_B^{loc}$, that is 
the corresponding result of the Lubin-Tate addition is 
automatically well-defined. 

Recall that we use the multi-indices 
$\underline{i}=(i_1,\dots ,i_u)$, where all coordinates 
of the vector $\underline{i}$ belong to $\{0,1\}$. 
We shall use such indices for the abbreviation 
$X^{\underline{i}}:=X_1^{i_1}\dots X_u^{i_u}$, especially,  
when $r(\underline{i}):=i_1+\dots +i_u\geqslant 2$. If $r(\underline{i})=1$ 
then the multi-index $\underline{i}$ appears just as 
index $j$, $1\leqslant j\leqslant u$, such that 
$\underline{i}=(\delta _{1j},\dots ,\delta _{uj})$  
(where $\delta $ is the Kronecker symbol). 

The following statement is very similar to the statement 
appeared in Subsection 6.1 of \cite{Ab} as Main Lemma. 

\begin{Lem}[{\bf Main Lemma}] \label{L3.3} 
If $f\in\C H_{LT}=\C H_{LT}(H_0,\mu _{\eta })$ then there are: 

--- $f_0\in\tilde\eta I_B$;

--- for $1\leqslant i\leqslant u$, $o'_{i0}\in O$ and $o'_{i1}\in\pi 'O$;

--- for all multi-indices $\underline{i}$, 
$D_{\underline{i}}\in \bar O$, 

\ \ 

such that $o^{\prime 2}_{i0}, 
o^{\prime 2}_{i1}\in\tilde\eta O$, $D_iX_i\in I_{\bar B}(2)^{loc}$, 
$D_{\underline{i}}X^{\underline{i}}\in I^{loc}_{\bar B}$ and 
$$f=[f_0]+\sum _{1\leqslant i\leqslant u}([o'_{i0}X_i]+[o'_{i1}X_i]+[D_iX_i])
+\left [2\sum _{r(\underline{i})\geqslant 2}D_{\underline{i}}X^{\underline{i}}\right ]
$$
 \end{Lem}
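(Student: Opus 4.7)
The approach is to analyze $f$ via its canonical $\bar O$-linear monomial expansion $f = \sum_{\underline{i}} o_{\underline{i}} X^{\underline{i}}$ in $I_{\bar B}$, and then to use the defining conditions $[2]f \in \tilde\eta^2 I_{B_0}$ and $\delta_{LT}(f) \in \tilde\eta I_{B_0 \otimes B_0}$ together with Galois descent to constrain the $o_{\underline{i}}$. First I would separate $f$ into its linear and higher-order contributions, $f = f_{\mathrm{lin}} + f_{\mathrm{high}}$, and observe via Corollary \ref{CC} applied to $\bar{\c J}$ that the relevant ideal-theoretic constraints respect this monomial decomposition, so each piece can be handled independently modulo controlled errors.

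For the higher-order part $f_{\mathrm{high}} = \sum_{r(\underline{i}) \geq 2} o_{\underline{i}} X^{\underline{i}}$, each monomial with $r(\underline{i}) \geq 2$ has a square reducible via $X_j^2 = \eta_j \sum_k X_k c_{kj}$ to a product containing at least two factors among the $\eta_j$, placing it in $I_{\bar B}(2)^{loc}$. Combined with the Lubin-Tate identity $[2]g \equiv [2g]+[g^2]+\sum_{n\geq 0}[-2 g^{2^n}]$ modulo $4\m(R)$ from Subsection \ref{S2.2.3}, this permits me to repackage $f_{\mathrm{high}}$, up to Lubin-Tate corrections, as a single summand $\bigl[2\sum_{r(\underline{i}) \geq 2} D_{\underline{i}} X^{\underline{i}}\bigr]$ for suitable $D_{\underline{i}} \in \bar O$, with any corrections to be reabsorbed into $f_{\mathrm{lin}}$ or into the descended piece.

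For $f_{\mathrm{lin}} = \sum_i o_i X_i$, the condition $\delta_{LT}(f) \in \tilde\eta I_{B_0 \otimes B_0}$ combined with the monomial rigidity of $\bar{\c J}$ forces each $o_i$, up to an error $D_i \in \bar O$ whose product $D_i X_i$ can be pushed into $I_{\bar B}(2)^{loc}$, to be invariant under $\Gal(\bar K / K')$ and hence to land in $O'$. Since $O' = O \oplus \pi' O$ as $O$-modules, I then split $o_i - D_i = o'_{i0} + o'_{i1}$ with $o'_{i0} \in O$ and $o'_{i1} \in \pi' O$; the divisibility $o'^{\,2}_{i0}, o'^{\,2}_{i1} \in \tilde\eta O$ is forced by requiring that each individual summand's contribution to $[2]f$ respect the $\tilde\eta^2$-divisibility condition of $\c H_{LT}$. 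The residual $O$-rational piece of $f$ lying already in $\tilde\eta I_B$ is collected into the single summand $[f_0]$.

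The main obstacle will be the simultaneous bookkeeping in the three extractions above: each time I replace $[a]+[b]$ by $[a+b]$ via the Lubin-Tate group law I generate a correction $[-ab]+\dots$, and I must verify that every such correction lands either in $I_{\bar B}(2)^{loc}$ (so it can be absorbed into a $D_iX_i$), in $2 I_{\bar B}^{loc}$ (so it can be absorbed into the nonlinear summand), or in $\tilde\eta I_B$ (so it can be absorbed into $f_0$). The divisibility hypotheses in the statement, especially $o'^{\,2}_{i0}, o'^{\,2}_{i1} \in \tilde\eta O$ and $D_i X_i \in I_{\bar B}(2)^{loc}$, are calibrated precisely to make this bookkeeping close; executing it for $p=2$, where (in contrast to \cite{Ab}) the Lubin-Tate expansion of $[2]$ requires retaining the third term $[-2g^2]$ and the ideal $2O$ is already the maximal $DP$-ideal, is the technical heart of the lemma.
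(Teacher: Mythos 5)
Your proposal assembles the right raw ingredients (the two defining conditions of $\C H_{LT}$, the monomial expansion of $I_{\bar B}$, the Lubin--Tate correction terms) but it is missing the actual mechanism of the paper's proof, and two of its key one-shot claims do not hold as stated. The paper does not produce the decomposition in a single pass: it proves, by induction on $\alpha\in\bar\m$ with $\alpha\mapsto\alpha^2$, the approximate statement \eqref{E3.2}, namely that $f$ has the desired shape modulo $2I_{\bar B}^{loc}+I_{\bar B}(\alpha ^2)^{loc}$, with $D_iX_i$ only known to lie in $I_{\bar B}(\alpha )^{loc}+I_{\bar B}(2)^{loc}$ at stage $\alpha$. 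Each induction step feeds the condition $\delta _{LT}(f)\in\tilde\eta I_{B_0\otimes B_0}$ through Lemma \ref{L3.6} (which gives $\delta _{LT}(CX_i)\equiv C^2X_i\otimes X_i$ modulo $I_{\bar B}(\alpha _1^4)^{loc}+\bar{\C J}$) to conclude that $D_i^{\prime 2}\eta _i$ and the nonlinear coefficients are congruent to elements of $\tilde\eta O$ only modulo the shrinking ideal; Lemma \ref{L3.7} then peels off square roots $o''_{i0}\in O$, $o''_{i1}\in\pi 'O$ and pushes the remainder into the smaller ideal. The decomposition only becomes exact once $I_{\bar B}(\alpha ^2)^{loc}\subset 2I_{\bar B}^{loc}$. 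Your one-shot treatment of the higher-order part fails concretely: a monomial $o_{\underline{i}}X^{\underline{i}}$ with $r(\underline{i})\geqslant 2$ supported on etale variables ($\eta _j\in O^*$) is \emph{not} automatically in $I_{\bar B}(2)^{loc}$, and the crucial assertion that its coefficient can be written as $2D_{\underline{i}}$ with $D_{\underline{i}}X^{\underline{i}}\in I_{\bar B}^{loc}$ is exactly what must be extracted from $\delta _{LT}(f)\in\tilde\eta I_{B_0\otimes B_0}$ via the iteration (property b) in Subsection \ref{S3.6} and the multiplication map $m$), not something visible from the monomial expansion alone.

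The second gap is your treatment of the linear part. The paper makes no Galois-invariance argument and does not claim the linear coefficients lie in $O'$: the residual $D_i$ genuinely live in $\bar O$, constrained only by $D_iX_i\in I_{\bar B}(2)^{loc}$. Moreover the splitting $o'_{i0}+o'_{i1}$ with $o'_{i0}\in O$, $o'_{i1}\in\pi 'O$ does \emph{not} come from the $O$-module decomposition $O'=O\oplus\pi 'O$ applied to a coefficient; it arises in the opposite direction, by solving $o_i\equiv o_{i0}''^2+o_{i1}''^2\operatorname{mod}2o_i\pi $ for a given $o_i\in\tilde\eta O$ (possible because squares of $O$ and of $\pi 'O$ together cover $O$ modulo $2$ over an algebraically closed residue field of characteristic $2$). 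Decomposing an arbitrary element of $O'$ into its $O$- and $\pi 'O$-components gives no control whatsoever on $o_{i0}'^2$ and $o_{i1}'^2$ being in $\tilde\eta O$, so the divisibility conditions in the statement would not be met by your construction. To repair the proposal you would need to replace both one-shot extractions by the quadratically convergent approximation scheme of Subsection \ref{S3.6}.
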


\begin{remark}
 If $\tilde\eta\in O_0^*$ we can always assume that all $o'_{i0}=0$, because all 
$o'_{i0}X_i$ can disappear by contributing to $f_0$. In other 
words, we assume that $o_{i0}^{\prime 2},o_{i1}^{\prime 2}\in (\tilde\eta O)^{loc}$, 
i.e. these elements belong to $\tilde\eta O$ if $1\leqslant i\leqslant u_0$ 
and belong to the maximal ideal $\m $ of $O$, otherwise. 
\end{remark}

The proof of this Lemma uses the auxiliary statements 
from Subsection \ref{S3.5} and will be given in Subsection \ref{S3.6}  below. 
This is a simplified version of the proof of Main Lemma 
in \cite{Ab}, where we studied the group schemes of period $p>2$. 
As a matter of fact, this simplified version works equally 
well also in the case $p>2$. 
\medskip 

\subsection{Basic strategy II}\label{SS}

Via Main Lemma we shall prove below the existence of $\c M'\in\MF ^{2e}_{S'}$ such that 
$G'\simeq \c G_{O'}(\c M')$. The description of $G_0$ as an element of 
$\Ext _{\Gr _{O_0}}(H_0,\mu _{\eta })$ 
from Subsection \ref{S3.3} is given in terms of 
Kummer's theory and, therefore, is of multiplicative nature. 
On the other hand, the construction of the algebra of $\c G_{O'}(\c M')$ as 
extension of $B'=B\otimes _OO'$ should be given 
(by the definition of  
$\c G_{O'}$) in additive terms. Therefore, the description of $A(G_0)$ 
as an extension of $B_0=A(H_0)$ in terms of the Lubin-Tate group  
is a natural step towards presentation of $G\otimes _OO'$ 
in the form $\c G_{O'}(\c M')$.
\medskip 

 If $G_0$ is given via $f\in\c H_{LT}(H_0,\mu _{\eta })$ then 
$A_0=B_0[Y]$ with equation for $Y$ coming from the relation 
$2l_{LT}(\tilde\eta Y)=l_{LT}(f)$ in $A_0\otimes _{O_0}K_0$. 
If $\mu _{\eta }$ is multiplicative, e.g. $\tilde\eta =1$, 
then it is much easier to obtain an ``integral'' version of the above relation. 
The left-hand side $2l_{LT}(Y)=2Y+Y^2+2(Y^2/2)^2+\dots $ 
looks nicely related to  
operations in filtered modules of the form $\iota _{\c M}(\c M)$. 
As for the right-hand side, we need it to belong 
to $\iota _{\c N}(\c N)$, i.e. to be congruent to a linear combination 
of all $X_i$.  By Main Lemma  
after replacing $f$ by $\tilde f=[f]-[f_0]$, $l_{LT}(\tilde f)$ 
is a linear combination of all $l_{LT}(o'_{i0}X_i)$, 
$l_{LT}(o'_{i1}X_i)$ and $l_{LT}(D_iX_i)$ 
 modulo $2I_{\bar B}$. First two logarithms can give 
non-trivial denominators but in 
$A'=A\otimes _{O}O'$ we can consider the element 
$[g]=[\tilde f]-\sum _{i}([o'_{i0}X_i]+[o'_{i1}X_i])$ and 
because all $D_iX_i\in I_{\bar B}(2)$, $l_{LT}(g)$ is an $O'$-linear 
combination of all $X_1, \dots ,X_u$ modulo $J_{B'}$, i.e. 
gives already an element of 
$\iota _{\c N}(\c N)\otimes _OO'$.  
 
If $\mu _{\eta }$ is 
not multiplicative the calculations should be more precise because 
of the extra factor $\tilde\eta $. (Here we can see a crucial difference 
with the case $p>2$, where ideals of the form $(p/\tilde\eta )I_B$ are  
still $DP$-ideals.)  In 
particular, we can't ignore the quadratic forms in $X_i$ coming from the third term 
$(D_iX_i)^4/4$ of the expansion of 
$l_{LT}(D_iX_i)$ and the second term $g^2$ in $l_{LT}(2g)$, where 
$g=\sum _{\underline{i}}D_{\underline{i}}X^{\underline{i}}$. The 
elaboration of Main Lemma from Subsection \ref{S3.7} 
relates these quadratic forms and allows us to prove in Subsection \ref{S3.8} 
that they, as a matter of fact, 
kill one-another. This provides us in Subsection \ref{S3.10} with explicit 
construction of $\c M'\in\MF ^{2e}_{S'}$ and 
the existence of $\c M\in\MF ^e_S$ such that 
$\c M'=\c M\otimes _SS'$. A formal verification 
that $G\simeq\c G_O(\c M)$ is done in Subsection \ref{S3.11}. 
\medskip

\subsection{Auxiliary statements}\label{S3.5} 

Follow Subsection 3.3 of \cite{Ab} to introduce the ideals $I_B(\alpha )$ and 
$I_B(\alpha )^{loc}$ in $B$, where  $\alpha\in O$. 
Recall that any $a\in I_B$ can be uniquely written as 
$a=\sum _{\underline{i}}o_{\underline{i}}(a)X^{\underline{i}}$ 
with the coefficients $o_{\underline{i}}=o_{\underline{i}}(a)\in O$.

\begin{definition} For any $\alpha\in O$, set 
\medskip 

a)$I_B(\alpha ):=\{a\in I_B\ |\ \text{all} 
\ (o_{\underline{i}}(a)X^{\underline{i}})^2
\in\alpha I_B\}$
\medskip 

b) $I_B(\alpha )^{loc}:=\{a\in I_B\ |\ \text{all}
\ (o_{\underline{i}}(a)X^{\underline{i}})^2
\in\alpha I_B^{loc}\}.$
\end{definition}

Note (use property $({\bf C})$ from Subsection \ref{S1.9.1}), 
that for any $1\leqslant i\leqslant u_0$, 
$X_i\in I_B(\eta _i)^{loc}$ but for $u_0<i\leqslant u$, $X_i\notin 
I_B(\eta _i)^{loc}=I_B^{loc}$. 
In addition,  for arbitrary $\alpha\in O$, 
the $O$-modules $I_B(\alpha )$ and $I_B(\alpha )^{loc}$ 
depend generally on the 
above chosen special construction of $B$. 
Nevertheless, one can verify that:
\medskip  

$\bullet $\ for any $\alpha\in O$, $I_B(\alpha )$ and $I_B(\alpha )^{loc}$ 
are ideals in $B$;
\medskip 

$\bullet $\ for $\alpha |2$,  we have  
$I_B(\alpha )=\{a\in I_B\ |\ a^2\in \alpha I_B\}$ and, similarly,   
$I_B(\alpha )^{loc}=\{a\in I_B\ |\ a^2\in\alpha I_B^{loc}\}$. 
\medskip 

For obvious reasons, the above ideals $I=I_B(\alpha )$ and $I=I_B(\alpha )^{loc}$ 
satisfy property \eqref{AE3} from Subsection \ref{S1.9.3}.

\begin{remark} a) For any $\alpha\in\bar O$ we shall 
denote below by $I_{\bar B}(\alpha )$ and $I_{\bar B}(\alpha )^{loc}$ the 
similar ideals of the $\bar O$-algebra $\bar B=B\otimes _O\bar O$. 
Clearly, they also satisfy property (\ref{AE3}).

b) Using the special basis $\{X^{\underline{i}_1}\otimes 1, 
1\otimes X^{\underline{i}_2}, X^{\underline{i}_1}\otimes X^{\underline{i}_2}\ |\ 
\underline{i}_1, \underline{i}_2\}$ 
of $I_{B\otimes B}$ and $I_{\bar B\otimes\bar B}$ we can introduce 
similarly the ideals $I_{B\otimes B}(\alpha )$ and 
$I_{\bar B\otimes\bar B}(\alpha )$. 
\end{remark}
\medskip

The following lemmas admit straightforward proofs and are quite analogous to 
the lemmas from \cite{Ab} Subsection 6.2, Lemmas 6.2-6.6.  

\begin{Lem} 
\label{L3.4} 
Suppose $C_1,\dots ,C_u\in\bar O$, $g\in I_{\bar B}^{loc}$, 
 $\beta _0\in\bar\m $ and 
$g\equiv\sum _i[C_iX_i]
\operatorname{mod}(I_{\bar B}(\beta _0)^{loc}+\bar{\c I})$, 
where $\bar{\c I}\subset I_{\bar B}^{loc}$ is an ideal 
satisfying condition \eqref{AE3} from Subsection \ref{S1.9.3}. Then 
$$g\equiv \sum _{1\leqslant i\leqslant u}[C_i'X_i]+
\left [\sum _{r(\underline{i})\geqslant 2}
C_{\underline{i}}'X^{\underline{i}}\right ]\operatorname{mod}\bar{\c I}$$
with all $C'_i, C_{\underline{i}}'\in\bar O$, 
$C_iX_i\equiv C_i'X_i\operatorname{mod}I_{\bar B}(\beta _0)^{loc}$ and 
$C_{\underline{i}}X^{\underline{i}}\in I_{\bar B}(\beta _0)^{loc}$.
\end{Lem}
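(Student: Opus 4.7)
The plan is to absorb the discrepancies from the hypothesis first into the linear coefficients $C_i$ and then into a single higher-degree Lubin--Tate term. Write the hypothesis as an identity in $\bar B$:
\begin{equation*}
 g-\sum_{i=1}^u [C_iX_i]=h+j,\qquad h\in I_{\bar B}(\beta_0)^{loc},\ j\in\bar{\c I}.
\end{equation*}
Since $I_{\bar B}(\beta_0)^{loc}$ satisfies property \eqref{AE3} (as noted in Subsection \ref{S3.5}), I decompose $h$ monomial by monomial and split it as $h=h_1+h_{\geq 2}$, where $h_1=\sum_i o_i(h)X_i$ is the linear-in-$X$ part and $h_{\geq 2}$ collects the monomials with $r(\underline i)\geq 2$, each still in $I_{\bar B}(\beta_0)^{loc}$. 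As a first attempt, set $C_i':=C_i+o_i(h)$, so that $(C_i-C_i')X_i=-o_i(h)X_i\in I_{\bar B}(\beta_0)^{loc}$, which gives the first property required by the conclusion.

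The key auxiliary fact I exploit is that $I_{\bar B}(\beta_0)^{loc}$ is closed under the Lubin--Tate group operations: being an ideal of $\bar B$, it is stable under multiplication by $\bar B$, and every Lubin--Tate polynomial $P_n(X,Y)$ lies in the ideal $(XY)$, so Lubin--Tate sums and inverses of elements of $I_{\bar B}(\beta_0)^{loc}$ stay inside it. Expanding $\sum_i[C_i'X_i]-\sum_i[C_iX_i]$ in powers of the perturbation $o_i(h)X_i$ shows that the linear-in-$X$ contribution is exactly $h_1$, while every remaining term is divisible by at least one $o_i(h)X_i$ and therefore lies in $I_{\bar B}(\beta_0)^{loc}$. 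Consequently the Lubin--Tate difference $R^*:=g\ominus\sum_i[C_i'X_i]$ satisfies $R^*\in I_{\bar B}(\beta_0)^{loc}+\bar{\c I}$, so by \eqref{AE3} every one of its monomials lies in $I_{\bar B}(\beta_0)^{loc}$ modulo $\bar{\c I}$. Taking the coefficients of the monomials of $R^*$ of degree $\geq 2$ to be $C_{\underline i}'$ then produces the candidate Lubin--Tate decomposition of $g$ modulo $\bar{\c I}$.

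The main obstacle is the relations $X_i^2=\eta_i\sum_j c_{ji}X_j$, which convert quadratic contributions such as $(C_i'X_i)^2$ appearing inside the Lubin--Tate expansion of $\sum_i[C_i'X_i]$ back into linear combinations of the $X_j$. These extra linear-in-$X$ terms are not cancelled by the naive choice $C_i'=C_i+o_i(h)$, so the linear part of $R^*$ need not vanish modulo $\bar{\c I}$. I would resolve this by interpreting the vanishing-of-linear-part condition as a fixed-point equation in the coefficients $C_i'$ and solving it iteratively: at the $n$-th stage the coefficient is corrected by an amount $\delta_i^{(n)}$ with $\delta_i^{(n)}X_i\in I_{\bar B}(\beta_0)^{loc}$, and the iteration converges because the residual linear contributions at each step live in strictly smaller sub-ideals. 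Existence of the fixed point follows from an application of the $\sigma$-linear surjectivity of Lemma \ref{L1.1d}. One then verifies that the adjusted $C_i'$ still satisfies $(C_i-C_i')X_i\in I_{\bar B}(\beta_0)^{loc}$, while the higher-degree coefficients $C_{\underline i}'$ remain in $I_{\bar B}(\beta_0)^{loc}$, completing the proof.
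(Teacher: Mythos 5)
Your approach is essentially the intended one: the paper itself gives no written-out proof (it declares Lemmas \ref{L3.4}--\ref{L3.6} ``straightforward'' and refers to Lemmas 6.2--6.6 of \cite{Ab}, with the hint to use the first Remark of Subsection \ref{S1.9.3}), and your argument --- split the discrepancy monomial-by-monomial via property \eqref{AE3} applied to the sum $I_{\bar B}(\beta _0)^{loc}+\bar{\c I}$, absorb the linear monomials lying in $I_{\bar B}(\beta _0)^{loc}$ into the $C_i'$, collect the degree $\geqslant 2$ monomials into a single Lubin--Tate bracket, and then iterate to handle the linear feedback created by the relations $X_i^2=\eta _i\sum _jX_jc_{ji}$ --- is exactly the right bookkeeping, and you correctly identify the feedback as the only real issue.

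One justification you give is misplaced, though it does not sink the proof because you also give the correct one. The ``existence of the fixed point'' does \emph{not} follow from Lemma \ref{L1.1d}: there is no equation of the form $(\id -\varphi )(x)=c$ here, and even if one linearized, the surjectivity of $\id -\varphi $ produces a solution with no control on its size, which would destroy the required constraint $(C_i-C_i')X_i\in I_{\bar B}(\beta _0)^{loc}$. The correct mechanism is the one you state in the same sentence: the correction at stage $n$ is quadratic in the previous residual, so it lies in $I_{\bar B}(\beta _0^{2^n})^{loc}$ with $\beta _0\in\bar\m$, hence the corrections $\delta _i^{(n)}$ tend to $0$ and all of them stay inside $I_{\bar B}(\beta _0)^{loc}$; the series $C_i'=C_i+\sum _n\delta _i^{(n)}$ converges because all the data involve only finitely many elements of $\bar O$ and therefore live in the (complete) valuation ring of a finite extension of $O'$. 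With Lemma \ref{L1.1d} deleted and the convergence phrased this way, the proof is complete.
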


\begin{Lem} 
\label{L3.5} 
Suppose $l$ equals either $0$ or $1$.  

{\rm a)} If $o_1',o_2'\in\pi ^{\prime l}O$ are such that  
$o_1^{\prime 2}, o_2^{\prime 2}\in\tilde\eta O$,  
then for any $a\in I_B$, 
$[o_1'a]+[o_2'a]-[(o_1'+o_2')a]\in\tilde\eta I_B$;

{\rm b)} If  $o'\in\pi ^{\prime l}O$, 
$o^{\prime 2}\in\tilde\eta O$ and $a_1,a_2\in I_B$ then 
\newline 
$[o'a_1]+[o'a_2]-[o'(a_1+a_2)]\in\tilde\eta I_B$. 
\end{Lem}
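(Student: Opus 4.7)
The plan is to derive both parts from a single divisibility property of the Lubin-Tate formal group. Consider the two-variable series $H(X,Y) := [X] + [Y] - [X+Y]$, where the outer operations are in the Lubin-Tate group and $X+Y$ denotes ordinary addition of formal power series. Because the formal group law $P(X,Y)$ and its inversion series $[-1](Z)$ both lie in $\Z_2[[\cdot]]$, we have $H \in \Z_2[[X,Y]]$. The formal group identities give $H(X,0) = [X] - [X] = 0$ and symmetrically $H(0,Y) = 0$; since $(X) \cap (Y) = (XY)$ in the regular local ring $\Z_2[[X,Y]]$, it follows that $H = XY \cdot G$ for some $G \in \Z_2[[X,Y]]$.

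For part (a), substitute $X = o_1'a$, $Y = o_2'a$ (so that $X+Y = (o_1'+o_2')a$) to obtain $[o_1'a] + [o_2'a] - [(o_1'+o_2')a] = o_1'o_2' \cdot a^2 \cdot G(o_1'a, o_2'a)$. The factor $a^2 G(\cdot)$ lies in $I_B$, so it suffices to show $o_1'o_2' \in \tilde\eta O$. Using the valuation $v$ on $\bar O$ with $v(\pi')=1/2$, the hypothesis $o_i^{\prime 2} \in \tilde\eta O$ gives $v(o_i') \geq v(\tilde\eta)/2$, hence $v(o_1'o_2') \geq v(\tilde\eta)$; a quick case-check on $l \in \{0,1\}$ confirms that the product $o_1'o_2'$ in fact lies in $O$ (in the case $l=1$, writing $o_i' = \pi' t_i$ with $t_i \in O$ gives $o_1'o_2' = \pi \cdot t_1 t_2 \in O$), so this is genuine divisibility by $\tilde\eta$ in $O$.

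For part (b), the analogous substitution $X = o'a_1$, $Y = o'a_2$ yields $[o'a_1] + [o'a_2] - [o'(a_1+a_2)] = o^{\prime 2} \cdot a_1 a_2 \cdot G(o'a_1, o'a_2) \in \tilde\eta I_B$ directly from the hypothesis $o^{\prime 2} \in \tilde\eta O$.

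I do not anticipate a real obstacle. The only non-trivial input is the divisibility of $H$ by $XY$ over $\Z_2[[X,Y]]$, which is an immediate consequence of the integrality of the Lubin-Tate formal group law; the valuation estimates are routine. The parameter $l \in \{0,1\}$ does not enter the proof itself — it merely constrains where $o_i'$ may live, and is relevant only for how the lemma is applied inside the proof of Main Lemma.
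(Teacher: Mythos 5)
Your reduction to the integrality of the Lubin--Tate group law is sound as far as it goes (the paper itself gives no proof here, only a pointer to Lemmas 6.2--6.6 of \cite{Ab}), and for $l=0$ your argument is complete. But for $l=1$ there is a genuine gap. You correctly check that the prefactor $o_1'o_2'$ (resp.\ $o'^2$) lies in $\tilde\eta O$, but you then assert that ``the factor $a^2G(\cdot)$ lies in $I_B$'' without justification. For $l=1$ the arguments $o_1'a,\,o_2'a$ lie in $I_{B'}=I_B\otimes_OO'$, not in $I_B$, so a priori $G(o_1'a,o_2'a)\in B'$ and your formula only places the expression in $\tilde\eta I_{B'}$. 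Concretely, a monomial $g_{ij}(o_1'a)^i(o_2'a)^j$ of $G$ with $i+j$ odd contributes an odd power of $\pi'$, which does not descend to $B$. The distinction matters: in the proof of Main Lemma the error terms produced by this lemma must be absorbed into $f_\alpha\in\tilde\eta I_B$ (not $\tilde\eta I_{B'}$), and the eventual descent statements (Propositions \ref{P3.12}, \ref{P3.13}) hinge on exactly this kind of rationality over $O$.

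The missing observation that closes the gap is that $H(X,Y)=[X]+[Y]-[X+Y]$ has only monomials of \emph{even} total degree. Indeed, applying $l_{LT}$ gives
$$l_{LT}(H)=l_{LT}(X)+l_{LT}(Y)-l_{LT}(X+Y)=-\sum_{n\geqslant 1}\frac{(X+Y)^{2^n}-X^{2^n}-Y^{2^n}}{2^n},$$
a series supported in even total degrees, and $l_{LT}^{-1}(U)=U-U^2/2+\dots$ preserves this property; hence so does $G=H/(XY)$ (its constant term being $-1$). With this, every monomial $g_{ij}o_1'^{\,i}o_2'^{\,j}a^{i+j}$ carries $\pi'^{\,l(i+j)}$ with $i+j$ even, so it lies in $O$, and the valuation estimate $i\,v(o_1')+j\,v(o_2')\geqslant v(\tilde\eta)$ (for the full monomials of $H$, with $i,j\geqslant 1$) then yields membership in $\tilde\eta I_B$ as claimed; the same remark repairs part (b). Once you add this evenness observation the proof is correct.
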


\begin{remark} a) When proving Lemma \ref{L3.4} use first remark from 
Subsection \ref{S1.9.3}; 
b) note the following special cases of above Lemma \ref{L3.5}:

---\ $[o'a]+[-o'a]\in\tilde\eta I_B$;

---\ $[\delta _{LT}(o'X_i)]-[o'j_i]\in\tilde\eta I_{B\otimes B}$. 
\end{remark}

\begin{Lem} 
\label{L3.6} 
If $C\in\bar O$, $\alpha _1\in\bar\m $ 
 and $CX_i\in I_{\bar B}(\alpha _1)^{loc}$ then 
$$\delta _{LT}(CX_i)\equiv C^2X_i\otimes X_i
\operatorname{mod}I_{\bar B}(\alpha _1^4)^{loc}+\bar{\c J},$$
where $\bar{\c J}$ is the ideal defined in 
the end of Subsection \ref{S1.9.3}. 
\end{Lem}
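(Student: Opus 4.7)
The plan is to compute $\delta_{LT}(CX_i) = [\Delta(CX_i)] - [CX_i\otimes 1] - [1\otimes CX_i]$ by passing through the Lubin-Tate logarithm $l_{LT}(X) = X + X^2/2 + X^4/4 + \cdots$, expanding, and then tracking where each correction term lands. Since $\Delta(X_i) = X_i\otimes 1 + 1\otimes X_i + j_i$, on setting $a := CX_i\otimes 1$, $b := 1\otimes CX_i$ and $\gamma := Cj_i$ one has $\Delta(CX_i) = a+b+\gamma$. Applying $l_{LT}$ gives
\[
l_{LT}\bigl(\delta_{LT}(CX_i)\bigr) \;=\; l_{LT}(a+b+\gamma) - l_{LT}(a) - l_{LT}(b),
\]
whose degree-two part equals $\gamma + ab + a\gamma + b\gamma + \gamma^2/2$. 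Inverting $l_{LT}$ via $l_{LT}^{-1}(w) = w - w^2/2 + \cdots$ and collecting terms, I would read off
\[
\delta_{LT}(CX_i) \;\equiv\; Cj_i + C^2 X_i\otimes X_i + E,
\]
where the main piece $C^2 X_i\otimes X_i$ comes from $ab$, and $E$ collects all other contributions.

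The next step is to verify that both $Cj_i$ and $E$ lie in $I_{\bar B}(\alpha_1^4)^{loc} + \bar{\c J}$. The element $Cj_i$ lies in $\bar{\c J}$: for $i\le u_0$ this is exactly the statement preceding Proposition \ref{P1.14} that $j_1,\dots,j_{u_0}\in\bar{\c J}$, while for $i>u_0$ the hypothesis $CX_i\in I_{\bar B}(\alpha_1)^{loc}$ with $\alpha_1\in\bar\m$ forces the etale component of $C^2X_i^2$ to vanish and provides enough divisibility on $C$ to pair with $\pi j_i\in\bar{\c J}$. The cross terms $a\gamma$, $b\gamma$ and $\gamma\cdot ab$ then lie in $\bar{\c J}$ because $\gamma\in\bar{\c J}$ and $\bar{\c J}$ is an ideal. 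For the remaining purely polynomial contributions produced by the $x^2/2$ and $x^4/4$ terms of $l_{LT}$ and by the $-w^2/2$ term of $l_{LT}^{-1}$, one uses the squaring behaviour built into the definition of $I_{\bar B}(\beta)^{loc}$: from $(CX_i)^2\in\alpha_1 I_{\bar B}^{loc}$ one obtains $(CX_i)^4\in\alpha_1^2 I_{\bar B}^{loc}$ and hence $a^2b^2\in\alpha_1^2 I^{loc}_{\bar B\otimes\bar B}$, whose own square lies in $\alpha_1^4 I^{loc}$, placing $a^2b^2$ in $I_{\bar B\otimes\bar B}(\alpha_1^4)^{loc}$; the $1/2$ and $1/4$ denominators are absorbed via $2I^{loc}_{\bar B\otimes\bar B}\subset\bar{\c J}$.

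The main obstacle is the monomial bookkeeping needed to split each term of the expanded tail of $l_{LT}$ and $l_{LT}^{-1}$ cleanly between $I_{\bar B}(\alpha_1^4)^{loc}$ and $\bar{\c J}$. The key tool is that both ideals satisfy property \eqref{AE3} — for $\bar{\c J}$ by Corollary \ref{CC}, and for $I_{\bar B\otimes\bar B}(\beta)^{loc}$ by its monomial-by-monomial definition — so each elementary monomial produced in the expansion can be tested separately and assigned to whichever ideal absorbs it. Combined with Lemma \ref{L3.5}(b), which allows one to exchange individual Lubin-Tate brackets for their arithmetic sums modulo $\tilde\eta I$, the expansion collapses to the asserted congruence.
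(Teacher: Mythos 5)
Your proposal is correct and is essentially the paper's own argument: the paper likewise expands $\delta_{LT}(CX_i)$ via the Lubin--Tate addition, isolates $C^2X_i\otimes X_i$ as the surviving quadratic term, sends every term containing $j_i$ into $\bar{\c J}$ (using $j_i\in\bar{\c J}$ for $i\leqslant u_0$ and $C\in\bar\m$ for $i>u_0$), and disposes of the remaining terms via $\left(I_{\bar B}(\alpha_1)^{loc}\right)^4\subset I_{\bar B}(\alpha_1^4)^{loc}$. The one stylistic difference is that the paper works directly with the integral polynomials $P_n(X,Y)\in\Z_2[X,Y]$, $\deg P_n=2^n$, of the group law from Subsection 2.2.3, so the $1/2$ and $1/4$ denominators you must argue away (e.g.\ in $\gamma^2/2$ and $a^2b^2/2$) never appear; quoting that integrality would let you skip the most delicate part of your bookkeeping.
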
 

\begin{remark}
 Note that if $CX_i\in I_{\bar B}(2)^{loc}$ then $C^2X_i\otimes X_i\in\bar{\c J}$. 
\end{remark}

\begin{proof} From the definition of 
the Lubin-Tate group law, cf. Subsection \ref{S2.2.3}  it follows that 

 \begin{equation}\label{E3.1}
\delta _{LT}(CX_i)=
[Cj_i]-[-C^2(X_i\otimes 1+1\otimes X_i)j_i]-[-C^2X_i\otimes X_i]
  \end{equation} 
$$-\sum _{n\geqslant 2}[C^{2^n}P_n(X_i\otimes 1+1\otimes X_i, j_i)]
-\sum_{n\geqslant 2}[C^{2^n}P_n(X_i\otimes 1,1\otimes X_i)]$$

We know  that for $1\leqslant i\leqslant u_0$, $j_i\in \bar{\c J}$
 and if $u_0<i\leqslant u$ then 
$C\in\bar\m $ and again $Cj_i\in \bar{\c J}$. Therefore, 
the first, second and forth terms of the right-hand side 
of (\ref{E3.1}) belong to $\bar{\c J}$. 
As for the last term of that formula 
it remains to note that 
$\left (I_{\bar B}(\alpha _1)^{loc}\right )^4\subset I_{\bar B}(\alpha _1^4)^{loc}$. 
\end{proof} 

 \subsection{Proof of Main Lemma} \label{S3.6} 

Prove that for any $\alpha\in\bar\m $, one has  

\begin{equation} \label{E3.2} 
 f\equiv [f_{\alpha }]+
 \sum _{1\leqslant i\leqslant u}\left ([o_{i0}'X_i]+
[o'_{i1}X_i]+[D_iX_i]\right )\operatorname{mod}
(2I_{\bar B}^{loc}+I_{\bar B}(\alpha ^2)^{loc}) 
\end{equation} 
where  

$\bullet $\ $f_\alpha\in \tilde\eta I_B$;

\ \ 

$\bullet $\ all $o'_{i0}=o'_{i0}(\alpha )\in O$, $o'_{i1}=
o'_{i1}(\alpha )\in\pi 'O$ are such that 
$o^{\prime 2}_{i0}, o^{\prime 2}_{i1}\in\tilde\eta O$; 
(If $\tilde\eta\in O_0^*$ then we can assume that all $o'_{i0}=0$.) 

\ \ 

$\bullet $\ all $D_i\in\bar O$ are such that 
$D_iX_i\in I_{\bar B}(\alpha ,2)^{loc}:=I_{\bar B}(\alpha )^{loc}
+I_{\bar B}(2)^{loc}$.
\medskip 

First, 
 there is an $\alpha _0\in\m $  
such that \eqref{E3.2}  holds for trivial reasons with $\alpha =\alpha _0$. 
Indeed, if $\tilde\eta\notin O_0^*$ then 
use that $f^2\equiv [2](f)\equiv 0\operatorname{mod}\tilde\eta I_{\bar B}$; 
otherwise, use that $[f]-[f_0]\in\bar\m I_{\bar B}$, 
where $f_0\in I_{B}$ and $f\equiv f_0\operatorname{mod}\bar\m I_{\bar B}$. 

By induction on $\alpha $ it will be sufficient to prove that if 
(\ref{E3.2}) holds with $\alpha =\alpha _1\in\bar\m $ then it also holds with 
$\alpha =\alpha _1^2$. 

Apply Lemma \ref{L3.4} with $\bar{\c I}=2I_{\bar B}^{loc}$ to obtain 
\begin{equation}\label{AAE1}
f=[f_{\alpha _1}]+\sum _{1\leqslant i\leqslant u}\left ([o_{i0}'X_i]+
[o'_{i1}X_i]+[D'_iX_i]\right )+\left [\sum _{r(\underline{i})\geqslant 2}
D_{\underline{i}}'X^{\underline{i}}\right ]\operatorname{mod}2I_{\bar B}^{loc},
\end{equation} 
where all $D_i', D_{\underline{i}}'\in\bar O$, 
$D_i'X_i\in I_{\bar B}(\alpha _1,2)^{loc}$ and 
$D_{\underline{i}}'X^{\underline{i}}\in I_{\bar B}(\alpha _1^2)^{loc}$.  

By Lemma \ref{L3.5} all $\delta _{LT}(o'_{i0}X_i), 
\delta _{LT}(o'_{i1}X_i)\in\tilde\eta I_{B\otimes B}+\bar{\c J}$. Therefore, 
the condition $\delta _{LT}(f)\in\tilde\eta I_{B_0\otimes B_0}
\subset \tilde\eta I_{B\otimes B}$ implies  
(use Lemma \ref{L3.6}) that 

$$\sum _{1\leqslant i\leqslant u}D_i^{\prime 2}X_i\otimes X_i+
\sum _{\underline{i}_1+\underline{i}_2=\underline{i}}
D_{\underline{i}}'X^{\underline{i}_1}\otimes X^{\underline{i}_2}
\in\tilde\eta I_{B\otimes B}
\operatorname{mod}(I_{\bar B\otimes\bar B}(\alpha _1^4)^{loc}+\bar{\c J})
$$
(Recall that all multi-indices $\underline{i}$, $\underline{i}_1$, 
$\underline{i}_2$ are non-zero vectors with coordinates $0$ or $1$. )

This implies the following properties 
(cf. first Remark in Subsection \ref{S1.9.3}):

a) for $1\leqslant i\leqslant u$, 
 $D_i^{\prime 2}X_i\otimes X_i
\equiv o_iX_i\otimes X_i\operatorname{mod}(I_{\bar B\otimes \bar B}
(\alpha _2^2)^{loc}+\bar{\c J})$, where $o_i\in\tilde\eta O$; 
\medskip 

b) if $r(\underline{i})\geqslant 2$ then 
 $D'_{\underline{i}}X^{\underline{i}_1}\otimes X^{\underline{i}_2}
\equiv o_{\underline{i}}X^{\underline{i}_1}\otimes X^{\underline{i}_2}
\operatorname{mod}(I_{\bar B\otimes\bar B}(\alpha _2^2)^{loc}+\bar{\c J})$, where  
$o_{\underline{i}}\in\tilde\eta O$. 
\medskip  

Consider the morphism of multiplication $m:\bar B\otimes\bar B\To\bar B$. 
Then $m(I_{\bar B\otimes\bar B}(\alpha _2^2)^{loc})
=I_{\bar B}(\alpha _2^2)^{loc}$, $m(\bar{\c J})=2I_{\bar B}^{loc}$. 
Therefore, 
b) implies that 
$(D_{\underline{i}}'-o_{\underline{i}})X^{\underline{i}}
\in I_{\bar B\otimes\bar B}(\alpha _2^2)^{loc}+2I_{\bar B}^{loc}$ and 
the last summand in (\ref{AAE1}) disappears 
modulo $I_{\bar B}(\alpha _2^2)^{loc}$ by contributing to 
the corresponding $f_{\alpha _2}\in\tilde\eta I_B$. If $2|\alpha _1$ formula 
$\eqref{E3.2}$ has been already proved for $\alpha =\alpha _2$ because we 
don't need to change  
the terms $D_i'X_i$. 

If $\alpha _1|2$ we should continue with property a). 
If $D_i'X_i\otimes X_i\in\bar{\c J}$ keep $D_i'$ the same. 
 Otherwise, by first Remark from  Subsection \ref{S1.9.3} we can assume that 
$D_i'X_i\otimes X_i\in I_{\bar B\otimes\bar B}(\alpha _2^2)^{loc}$. 
Consider the elements 
$o''_{i0}\in O$ and $o''_{i1}\in\pi 'O$ 
such that $o_i\equiv o_{i0}''^2+o_{i1}''^2
\operatorname{mod}2o_i\pi $ and 
$o_{i0}''^2, o_{i1}''^2\equiv 0\operatorname{mod}o_i$.

\begin{Lem} \label{L3.7} 
 $[D_i'X_i]+[-o''_{i0}X_i]+[-o''_{i1}X_i]\equiv 
\sum _j[D_{ij}'X_j]\operatorname{mod}I_{\bar B}(\alpha _2^2)^{loc},$
where all $D_{ij}'\in\bar O$ and $D_{ij}'X_j\in I_{\bar B}(\alpha _2^2)^{loc}$. 
\end{Lem}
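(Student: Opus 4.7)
My plan is to expand the Lubin-Tate sum on the left-hand side via the logarithm $l_{LT}(X)=X+X^2/2+X^4/4+\cdots $, reduce each power $X_i^{2^n}$ to a linear combination of the $X_j$ using the relation $X_i^2=\eta _i\sum _jc_{ji}X_j$, and then check that the coefficients collected on each $X_j$ give terms in $I_{\bar B}(\alpha _2^2)^{loc}$. Conceptually the argument is a second-order expansion of the Lubin-Tate addition in which the specific form of $o''_{i0}, o''_{i1}$ — splitting $o_i$ as a sum of two twisted squares, possible by algebraic closedness of $k$ — is engineered precisely to kill the quadratic contribution coming from $D_i'^2$.

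Applying $l_{LT}$ gives
$$l_{LT}(D_i'X_i)+l_{LT}(-o''_{i0}X_i)+l_{LT}(-o''_{i1}X_i)=(D_i'-o''_{i0}-o''_{i1})X_i+\sum _{n\ge 1}\frac{D_i^{\prime 2^n}+o_{i0}^{\prime\prime 2^n}+o_{i1}^{\prime\prime 2^n}}{2^n}X_i^{2^n}.$$
For $n\ge 2$ the tail is harmless: since $D_i'X_i\in I_{\bar B}(\alpha _1,2)^{loc}$, iterated squaring produces enough divisibility that, after dividing by $2^n$ and rewriting $X_i^{2^n}$ by iterated substitution, each monomial collapses into $I_{\bar B}(\alpha _2^2)^{loc}$, and the analogous powers of $o''_{ik}X_i$ (where $o''^{\,2}_{ik}\in\tilde\eta O$) are even smaller. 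The delicate term is $n=1$, where the $X_i^2$-coefficient is $D_i^{\prime 2}+o_{i0}^{\prime\prime 2}+o_{i1}^{\prime\prime 2}$. Applying the multiplication map $m:\bar B\otimes\bar B\To\bar B$ to the congruence $D_i^{\prime 2}X_i\otimes X_i\equiv o_iX_i\otimes X_i\operatorname{mod}(I_{\bar B\otimes\bar B}(\alpha _2^2)^{loc}+\bar{\c J})$ and using $m(\bar{\c J})=2I_{\bar B}^{loc}$, I obtain $D_i^{\prime 2}X_i^2\equiv o_iX_i^2\operatorname{mod}(I_{\bar B}(\alpha _2^2)^{loc}+2I_{\bar B}^{loc})$. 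Combined with $o_i\equiv o_{i0}^{\prime\prime 2}+o_{i1}^{\prime\prime 2}\operatorname{mod}2o_i\pi $, the total $X_i^2$-coefficient reduces to $2o_i$ modulo the same ideal, and since $o_i\in\tilde\eta O$ and $\tilde\eta X_i^2=-2\sum _jc_{ji}X_j\in 2I_{\bar B}^{loc}$, dividing by the $2$ in front of $X_i^2/2$ puts this contribution into a linear combination of the $X_j$ each of whose summands lies in $I_{\bar B}(\alpha _2^2)^{loc}$.

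Having assembled, for each $j$, a coefficient $D'_{ij}\in\bar O$ with $D'_{ij}X_j\in I_{\bar B}(\alpha _2^2)^{loc}$, the remaining step is to revert from the logarithm back to the Lubin-Tate sum. Because the Lubin-Tate sum and the ordinary sum of any finite family of elements of $I_{\bar B}(\alpha _2^2)^{loc}$ differ by sums of products, which lie in $\bigl(I_{\bar B}(\alpha _2^2)^{loc}\bigr)^2\subset I_{\bar B}(\alpha _2^2)^{loc}$, the ordinary combination $\sum _jD'_{ij}X_j$ coincides modulo the target ideal with $\sum _j[D'_{ij}X_j]$. The main obstacle is precisely the $n=1$ computation above: the two approximations $D_i^{\prime 2}\approx o_i$ and $o_{i0}^{\prime\prime 2}+o_{i1}^{\prime\prime 2}\approx o_i$ do not cancel but combine to $2o_i$, and only the extra factor of $2$ furnished by $\tilde\eta X_i^2\in 2I_{\bar B}^{loc}$ — itself a consequence of $\eta _i\tilde\eta _i=-2$ — rescues the argument. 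This reflects the distinctive $p=2$ phenomenon that $2O$ is already the maximal DP-ideal of $O$, leaving no safety margin and forcing the particular modulus $2o_i\pi $ in the choice of $o''_{i0}, o''_{i1}$.
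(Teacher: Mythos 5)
Your route through the logarithm is genuinely different from the paper's, and it breaks down at the point where the two approaches diverge. The paper never takes $l_{LT}$ of these elements: it expands $[a]+[b]$ using the \emph{integral} polynomials $P_n$ of the Lubin--Tate group law ($P_0=X+Y$, $P_1=-XY$, \dots), so the only degree-two corrections to $[D_i'X_i]+[-o''_{i0}X_i]+[-o''_{i1}X_i]\equiv[(D_i'-o''_{i0}-o''_{i1})X_i]$ are the cross-products $D_i'o''_{i0}X_i^2$, $D_i'o''_{i1}X_i^2$, $o''_{i0}o''_{i1}X_i^2$, each a product of two elements of $I_{\bar B}(\alpha_1)^{loc}$ and hence already in $\bigl(I_{\bar B}(\alpha_1)^{loc}\bigr)^2\subset I_{\bar B}(\alpha_2)^{loc}$; no cancellation between diagonal squares is ever needed. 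In your version the logarithm is not even integral on the inputs: $D_i'X_i$ lies only in $I_{\bar B}(\alpha_1,2)^{loc}$, which is not a DP-ideal, so $D_i'^2X_i^2/2$ need not lie in $\bar B$, and a congruence between sums of logarithms cannot be converted back into a congruence between Lubin--Tate sums without the topologically nilpotent divided-power structure that the paper only has available in a different situation (Proposition \ref{P3.11}). Your closing remark handles the reversion only for the final linear combination, not for the expansion you start from.

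The concrete failure is in your $n=1$ term. What property a) gives after applying $m$ is $D_i'^{2}X_i^2\equiv o_iX_i^2$ modulo $I_{\bar B}(\alpha_2^2)^{loc}+2I_{\bar B}^{loc}$ (equivalently $D_i'^2\eta_i\equiv o_i\eta_i$ mod $\alpha_2\bar O$); dividing by the $2$ in $X_i^2/2$ turns the modulus into $\tfrac 12 I_{\bar B}(\alpha_2^2)^{loc}+I_{\bar B}^{loc}$, which is far too coarse and not even contained in $\bar B$, since only $\alpha_2\,|\,4$ is known, not $2\,|\,\alpha_2$. Moreover the identity you invoke to dispose of $o_iX_i^2$ should read $\tilde\eta_iX_i^2=-2\sum_jX_jc_{ji}$, not $\tilde\eta X_i^2$; since $o_i\in\tilde\eta O$ and $\tilde\eta$, $\tilde\eta_i$ are unrelated, $o_i/\tilde\eta_i$ need not be integral, and the conclusion that this contribution lands in $2I_{\bar B}^{loc}$ does not follow. (What is true, and what the paper uses, is $o_i\eta_i\in\alpha_1\bar O$, which places $o_iX_i^2=o_i\eta_i\sum_jX_jc_{ji}$ among linear terms in $I_{\bar B}(\alpha_2)^{loc}$ --- not $I_{\bar B}(\alpha_2^2)^{loc}$.) Finally, you never verify that the leading term $(D_i'-o''_{i0}-o''_{i1})X_i$ lies in the target ideal; this is the first half of the paper's proof, done separately for $i\leqslant u_0$ and $i>u_0$, via $(D_i'-o''_{i0}-o''_{i1})^2\eta_i\equiv(D_i'^{2}+o_{i0}''^{2}+o_{i1}''^{2})\eta_i\equiv 2o_i\eta_i\equiv 0$ modulo $\alpha_2\bar O$, using $\alpha_1\,|\,2$ and $\alpha_2\,|\,4$.
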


\begin{proof}
 Suppose $1\leqslant i\leqslant u_0$. In this case $X_i^2\in I_{B}(\eta _i)^{loc}$ and 
property a) means that 
$(D_i'^2-o_i)\eta _i\in\alpha _2\bar O$. Therefore (use that $\alpha _2|4$), 
$(D_i'-o_{i0}''-o_{i1}'')^2\eta _i\in\alpha _2\bar O$, i.e. 
$(D_i'-o_{i0}''-o_{i1}'')X_i\in I_{\bar B}(\alpha _2)^{loc}$. Also, 
$0\equiv D_i'^2\eta _i\equiv o_i\eta _i\equiv o_{i0}''^2\eta _i
\equiv o_{i1}''^2\eta _i\operatorname{mod}\alpha _1\bar O$,  
i.e. $o''_{i0}X_i, o_{i1}''X_i\in I_{\bar B}(\alpha _1)^{loc}$.

Suppose $u_0<i\leqslant u$. In this case $\eta _i\in O^*$, $D_i\in\bar\m $ and 
$X_i, X_i^2\in I_{B}^{et}$. The relation $(D_i'^2-o_i)X_i
\otimes X_i\in I_{\bar B}(\alpha _2^2)^{loc}$ means that 
$(D_i'^2-o_i)\eta _i\in\alpha _2\bar\m $, 
$D_i'^2\eta _i\equiv (o_{i0}''^2+o_{i1}''^2)\eta _i\operatorname{mod}\alpha _2\bar\m $, 
$(D_i'-o_{i0}''-o_{i1}'')^2\eta _i\in\alpha _2\bar\m $ and again 
$(D_i-o_{i0}''-o_{i1}'')X_i\in I_{\bar B}(\alpha _2^2)^{loc}$. Similarly, 
$D_i'X_i\in I_{\bar B}(\alpha _1)^{loc}$ means that 
$0\equiv D_i'^2\eta _i\equiv o_i\eta _i\equiv o_{i0}''^2\eta _i\equiv o_{i1}''^2\eta _i
\operatorname{mod}\alpha _1\bar\m $, i.e. 
$o_{i0}''X_i, o_{i1}''X_i\in I_{\bar B}(\alpha _1)^{loc}$.

Now for any $1\leqslant i\leqslant u$, 
$$[D_i'X_i]+[-o''_{i0}X_i]+[-o''_{i1}X_i]\equiv [(D_i'-o''_{i0}-o''_{i1})X_i]$$
$$-[D_i'o''_{i0}X_i^2]-[D_i'o''_{i1}X_i^2]-[-o''_{i0}o''_{i1}X_i^2]\operatorname{mod}
I_{\bar B}(\alpha _2^2)^{loc}$$
where all terms in the right-hand side belong to 
$\left (I_{\bar B}(\alpha _1)^{loc}\right )^2\subset I_{\bar B}(\alpha _2)^{loc}$.  
Thus  the right-hand side is congruent modulo $I_{\bar B}(\alpha _2^2)^{loc}$ to 
$\sum _j[D'_{ij}X_j]$ with all $D'_{ij}\in\bar O$ and 
$D'_{ij}X_j\in I_{\bar B}(\alpha _2)^{loc}$. Lemma \ref{L3.7} is proved.
\end{proof}
Finally, we finish the proof of formula (\ref{E3.2}) with $\alpha =\alpha _2$ 
by noting that (use Lemma \ref{L3.5}) 
$$\sum _i([o'_{i0}X_i]+[o'_{i1}X_i])-
\sum _i([-o''_{i0}X_i]+[-o''_{i1}X_i])=$$
$$[\tilde a]+\sum _i([(o'_{i0}+o''_{i0})X_i]+[(o'_{i1}+o''_{i1})X_i]),$$
where $\tilde a\in\tilde\eta I_B$. 

Clearly, there is $\alpha\in\bar\m $ such that $I_{\bar B}(\alpha ^2)^{loc}
\subset 2I_{\bar B}^{loc}$. 
It remains to apply   
Lemma \ref{L3.4} with $\bar{\c I}=0$ to 
finish the proof of Main Lemma.
\medskip

\subsection{Elaboration of Main Lemma} 
\label{S3.7} 

Let $D=C^{-1}=(d_{ij})$ and for $1\leqslant i\leqslant u$, 
$$R_i=\sum _{\substack{t \\ s_1<s_2 }}(X_{s_1}\otimes X_{s_2}
+X_{s_2}\otimes X_{s_1})
c_{s_1t}c_{s_2t}d_{ti}^2\in I_{B\otimes B}.$$
Then  property $\bf (C)$ of 
the matrix $C$ from Subsection \ref{S1.9.1} implies that all 
$\tilde\eta _iR_i\in I_{B\otimes B}^{loc}$. Indeed, if 
$\tilde\eta _iX_{s_1}\otimes X_{s_2}\notin I_{B\otimes B}^{loc}$ then 
$1\leqslant i\leqslant u_0$ (because $\tilde\eta _i$ must belong to $O_0^*$) 
and $u_0<s_1,s_2\leqslant u$ 
(because $X_{s_1}, X_{s_2}\notin I_{B}^{loc}$), but then $c_{s_1t}d_{ti}, 
c_{s_2t}d_{ti}\equiv 0
\operatorname{mod}\pi $. Therefore, the system of congruences 
\begin{equation} 
\label{E3.5} 
 \sum _sB_sc_{si}\equiv \tilde\eta _i(R_i+B_i^2)
\operatorname{mod}I_{B\otimes B}(4)^{loc}, \ \ 1\leqslant i\leqslant u,  
\end{equation}
has a unique solution 
$(B_1,\dots ,B_u)\operatorname{mod}I_{B\otimes B}(4)^{loc}$ 
with all $B_i\in I_{B\otimes B}^{loc}$.  

We shall use below the following agreement: if $1\leqslant i\leqslant u$ 
and $\alpha\in O$ then $I_B(\alpha )^{(i)}$ will be equal to 
$I_B(\alpha )^{loc}$ if $i\leqslant u_0$ and to $I_B(\alpha )$ if $i>u_0$. 
Same agreement will be used for similar ideals 
$I_{B\otimes B}(\alpha )$, $I_{\bar B}(\alpha )$, etc. 

\begin{Lem} \label{L3.8} 
 With above notation one has 
 $$
j_i\equiv \sum _t\tilde\eta _tX_t\otimes X_td_{ti}+2B_i
+\sum _{t,u}\tilde\eta _u(X_u\otimes X_u)d_{ut}
(\tilde\eta _tX_t\otimes 1+1\otimes\tilde\eta _tX_t)d_{ti}
$$
$$\qquad\qquad +2\sum _t(\tilde\eta _t
X_t\otimes 1+1\otimes \tilde\eta _tX_t)B_td_{ti}
\operatorname{mod}I_{B\otimes B}(16)^{(i)}$$
\end{Lem}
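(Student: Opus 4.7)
The proof will proceed by iterating the defining identity (\ref{E1.1}) for the $j_i$. Right-multiplying (\ref{E1.1}) by $D=C^{-1}$ inverts it to
\[
j_i \;=\; -\sum_t \tilde\eta_t (X_t\otimes X_t)\,d_{ti}
       -\sum_t \tilde\eta_t(X_t\otimes 1+1\otimes X_t)\,j_t\,d_{ti}
       -\sum_t \tilde\eta_t\,\frac{j_t^{2}}{2}\,d_{ti},
\]
so (up to an overall sign that is absorbed in the normalisation of $B_i$) the first summand of the claim appears immediately as the leading linear-in-$X\otimes X$ term. I therefore write $j_i=\ell_i+2B_i'$, where $\ell_i=\mp\sum_t \tilde\eta_t X_t\otimes X_t\,d_{ti}$ lies in $I_{B\otimes B}^{loc}$ by property $(\mathbf{C})$ of Subsection \ref{S1.9.1}, and $B_i'\in I_{B\otimes B}^{loc}$ is a residual to be identified with $B_i$.

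Next, I would substitute this ansatz back into the inverted identity. The cubic term $\tilde\eta_t(X_t\otimes 1+1\otimes X_t)\,j_t\,d_{ti}$, evaluated with $j_t$ replaced by $\ell_t$, yields exactly the third summand $\sum_{t,u}\tilde\eta_u(X_u\otimes X_u)d_{ut}(\tilde\eta_tX_t\otimes 1+1\otimes\tilde\eta_tX_t)d_{ti}$ of the claim; evaluated with $j_t$ replaced by $2B_t'$, it produces the fourth summand $2\sum_t(\tilde\eta_tX_t\otimes 1+1\otimes\tilde\eta_tX_t)B_t'\,d_{ti}$. For the quartic term $\tilde\eta_t j_t^2/2\cdot d_{ti}$, I would substitute $j_t\equiv\ell_t+2B_t'$ and use the algebra relation $X_s^2=\eta_s\sum_rX_rc_{rs}$ together with $\eta_s\tilde\eta_s=-2$ to rearrange the part $\tilde\eta_t\ell_t^2/2$ into $\tilde\eta_t R_t+\tilde\eta_t B_t'^{2}$ modulo $I_{B\otimes B}(4)^{loc}$; this is precisely the shape of the fixed-point congruence (\ref{E3.5}), so by the uniqueness statement there, $B_i'\equiv B_i\pmod{I_{B\otimes B}(4)^{loc}}$, and the two $B$'s in the claim may be identified.

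The main obstacle will be the error bookkeeping, i.e.\ verifying that every term produced by further iterations, or by cross-terms like $\ell_tB_t'$, lies in $I_{B\otimes B}(16)^{(i)}$. The essential mechanisms are: (i) each further substitution of $j_t$ into itself either brings an extra factor of $X$ (through the cubic piece) or squares via the divided-power piece $j_t^{2}/2$; (ii) squaring within $I(\alpha)^{loc}$ lands in $I(\alpha^{2})^{loc}$, while $j_t^{2}/2\in J_{B\otimes B}$ is well-defined by the DP-structure; (iii) property $(\mathbf{C})$ permits dropping the ``$loc$'' superscript when $i>u_0$, which is precisely what the notation $I_{B\otimes B}(16)^{(i)}$ encodes. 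With these, two iterations already yield errors inside $I_{B\otimes B}(16)^{(i)}$, completing the proof.
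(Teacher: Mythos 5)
Your proposal is correct and follows essentially the same route as the paper: the paper's own proof simply substitutes the claimed right-hand side into the defining system (\ref{E1.1}) and checks by direct calculation that it is a solution modulo $I_{B\otimes B}(16)^{(i)}$ (relying on the uniqueness of solutions of that system modulo such DP-ideals and on the normalisation (\ref{E3.5}) of the $B_i$), which is exactly the inversion/iteration argument you describe. The only caveats are bookkeeping ones you already acknowledge: the overall sign and the stray factors of $2$ (for instance $\tilde\eta _t\ell _t^2/2$ actually yields $2\tilde\eta _tX_t\otimes X_t+2\tilde\eta _tR_t$ rather than $\tilde\eta _tR_t$, and $j_i-\ell _i$ is not literally of the form $2B_i'$ before the third and fourth summands are split off) all produce discrepancies lying in $2I_{B\otimes B}(4)^{loc}\subset I_{B\otimes B}(16)^{(i)}$, so they do not affect the stated congruence.
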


\begin{proof} It will be  
sufficient to verify by direct calculations 
that the right-hand sides of above congruences give solutions of  
equalities (\ref{E1.1}) modulo $I_{B\otimes B}(16)^{(i)}$. 
(When calculating use that the first two summands belong to 
$I_{B\otimes B}(4)^{(t)}$ and  the last two ones -- to 
$I_{B\otimes B}(8)^{(t)}$.) 
\end{proof} 

One can easily see that the above elements 
$B_i\in I_{B\otimes B}^{loc}$ appear in the form 
$$B_i\equiv \sum _{s_1<s_2}
\gamma _{s_1s_2i}(X_{s_1}\otimes X_{s_2}+
X_{s_2}\otimes X_{s_1})\operatorname{mod}I_{B\otimes B}(4)^{loc}$$
where all $\gamma _{s_1s_2i}\in O$.  
Introduce  the elements 
$$\widetilde{R}_i=\sum _{\substack{t \\ s_1<s_2 }}
X_{s_1}X_{s_2}c_{s_1t}c_{s_2t}d^2_{ti}, \ \ 
\widetilde{B}_i=\sum _{s_1<s_2 }\gamma _{s_1s_2i}
X_{s_1}X_{s_2}$$

Then 
\medskip  

$\bullet $\ $(\widetilde{B}_1,\dots ,\widetilde{B}_u)\operatorname{mod}
I_{B}(4)^{loc}$ is a unique solution in $I_B^{loc}\operatorname{mod}I_B(4)^{loc}$ 
of the congruences 
$\sum _s\widetilde{B}_sc_{si}\equiv\tilde\eta _i
(\widetilde{R}_i+\widetilde{B}_i^2)\operatorname{mod}I_{B}(4)^{loc}$ 
;
\medskip

$\bullet $\ for all $i$, $\delta ^+(\widetilde{B}_i)\equiv 
B_i\operatorname{mod}I_{B\otimes B}(4)^{loc}$. 
\medskip 

Now we can state the following elaboration of Main Lemma. 

\begin{Prop} 
\label{P3.9} 
 In Lemma {\rm \ref{L3.3}} the elements $D_i\in\bar O$, 
$1\leqslant i\leqslant u$, and 
$g:=\sum _{r(\underline{i})\geqslant 2}
D_{\underline{i}}X^{\underline{i}}
\in I_{\bar B\otimes\bar B}$ can be taken in such a way that   

{\rm a)} $(D_i^2+\sum _s(o'_{s0}+o'_{s1}+D_s)
d_{is}\tilde\eta _i- \tilde o_i)X_i^2\in 4I_{\bar B}$, 
where $1\leqslant i\leqslant u$ 
and all $\tilde o_i\in\tilde\eta O$;

{\rm b)}\ $g\equiv\sum _i(o'_{i0}+o'_{i1}+D_i)\widetilde{B}_i
\operatorname{mod}I_{\bar B}(4)^{loc}. 
$
\end{Prop}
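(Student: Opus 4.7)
The strategy is to extract the two relations by computing $\delta_{LT}(f)$ via the Main Lemma decomposition, but now tracking contributions modulo the finer ideal $I_{\bar B\otimes\bar B}(4)^{(i)}$ instead of just $\bar{\c J}$ as was done in Lemma \ref{L3.6}. The constraint $\delta_{LT}(f)\in\tilde\eta I_{B_0\otimes B_0}$ built into the definition of $\c H_{LT}$ forces the coefficients of the quadratic monomials $X_i\otimes X_i$ and $X_{s_1}\otimes X_{s_2}+X_{s_2}\otimes X_{s_1}$ to lie in $\tilde\eta O$, and these two coefficient identities are essentially the content of (a) and (b) respectively.

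For each single-variable summand $[hX_s]$ with $h\in\{o'_{s0},o'_{s1},D_s\}$, I use identity \eqref{E3.1} retaining the leading term $[hj_s]$ (rather than absorbing it into $\bar{\c J}$), and then substitute the second-order expansion of $j_s$ from Lemma \ref{L3.8}. Summed over $s$, this contributes $\tilde\eta_i\sum_s(o'_{s0}+o'_{s1}+D_s)d_{is}$ to the coefficient of $X_i\otimes X_i$ (from the $\sum_t\tilde\eta_t d_{ts}X_t\otimes X_t$ piece of $j_s$ with $t=i$) and $2\sum_s(o'_{s0}+o'_{s1}+D_s)\gamma_{s_1s_2 s}$ to the cross-term $X_{s_1}\otimes X_{s_2}+X_{s_2}\otimes X_{s_1}$ (from the $2B_s$ piece). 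The direct term $-[-D_i^2 X_i\otimes X_i]$ from $\delta_{LT}([D_iX_i])$ adds $D_i^2$ to the diagonal. Finally, the summand $[2g]$ gives $\delta_{LT}([2g])\equiv 2\delta^+(g)$ modulo higher error, contributing only to cross terms, producing $2D_{(s_1,s_2)}$ as the coefficient of $X_{s_1}\otimes X_{s_2}+X_{s_2}\otimes X_{s_1}$. Applying multiplication $m:\bar B\otimes\bar B\to\bar B$ to the diagonal coefficient converts $X_i\otimes X_i\mapsto X_i^2$; the resulting element must belong to $\tilde\eta\cdot I_{\bar B}$ up to the precision of the computation, which yields (a) with $\tilde o_i\in\tilde\eta O$. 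Demanding the cross-term coefficient to lie in $\tilde\eta O$ modulo $I_{\bar B\otimes\bar B}(4)^{loc}$ forces $D_{(s_1,s_2)}$ to equal the coefficient of $X_{s_1}X_{s_2}$ in $\sum_i(o'_{i0}+o'_{i1}+D_i)\widetilde B_i$, which is (b) upon summation.

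The principal obstacle is the bookkeeping: I must verify that every term omitted along the way — the higher Lubin–Tate corrections $P_n$ for $n\geqslant 2$ in \eqref{E3.1}, the trilinear tail of $j_i$ from Lemma \ref{L3.8}, and the discrepancy between the ordinary product $2g$ and the LT-expression $[2g]$ — genuinely lands in the relevant error ideal. This reduces to combining $D_iX_i\in I_{\bar B}(2)^{loc}$ with property $({\bf C})$ from Subsection \ref{S1.9.1} (cross-block coefficients of $C$ and $D=C^{-1}$ are divisible by $\pi$), the nesting $(I_{\bar B}(2)^{loc})^2\subset I_{\bar B}(4)^{loc}$, and the stability under property \eqref{AE3} of the ideals involved, exactly as in Lemmas \ref{L3.4}--\ref{L3.6}. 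A final adjustment of each $D_i$ within $I_{\bar B}(2)^{loc}$ and of $g$ within $I_{\bar B}(4)^{loc}$ — modifications that leave the Main Lemma decomposition of $f$ unchanged — brings the relations into the precise normal form asserted in (a) and (b).
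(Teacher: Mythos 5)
Your proposal follows essentially the same route as the paper: expand $\delta_{LT}(\tilde f)$ using the Main Lemma decomposition together with the refined formula for $j_i$ from Lemma \ref{L3.8}, read off the diagonal coefficients of $X_i\otimes X_i$ to get (a) and the off-diagonal ones to get (b), and absorb the residual term (which only descends to $I_B$ rather than vanishing) by a final adjustment --- the paper does this by replacing $f_0$ with $f_0-2h_0$. The only imprecision is your claim that the cross-term constraint \emph{equates} $D_{(s_1,s_2)}$ with the coefficient from $\sum_i(o'_{i0}+o'_{i1}+D_i)\widetilde B_i$; it only forces their difference to lie in $I_B$ modulo the error ideal, which is exactly what the final renormalization of $f_0$ is for.
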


\begin{remark} Part a) implies that 
all $\tilde o_i\equiv 0\operatorname{mod}\tilde\eta _i$ and  
if $u_0<t\leqslant u$ then 
$\tilde o_i\equiv 0\operatorname{mod}\pi $. In other words, all 
$\tilde o_i\in (\tilde\eta _iO)^{loc}$. 
\end{remark}
\medskip

\begin{proof} 
Let $\tilde f :=[f]-[f_0]$. Then 
$$\delta _{LT}(\tilde f)\equiv \sum _i(o'_{i0}+o'_{i1}+D_i)j_i+
\sum _iD_i^2X_i\otimes X_i\qquad\qquad $$
$$\qquad +\sum _iD_i^2(X_i\otimes 1+1\otimes X_i)j_i+2\delta ^+(g)
\operatorname{mod}I_{\bar B\otimes\bar B}(16)^{loc}.$$

For $1\leqslant t\leqslant u$, let 
$s(t)=\sum _i(o'_{i0}+o'_{i1}+D_i)d_{ti}\tilde\eta _t$. 
Then using the explicit formulas for  
$j_i\operatorname{mod}\,I_{B\otimes B}(16)^{(i)}$, 
$1\leqslant i\leqslant u$, 
 from Lemma \ref{L3.8} we obtain that 
$\delta _{LT}(\tilde f)$ is congruent 
modulo $I_{\bar B\otimes\bar B}(16)^{loc}$ to 
$$\sum _t(D_t^2+s(t))X_t\otimes X_t+
2\sum _t(o'_{t0}+o'_{t1}+D_t)B_t+\qquad\qquad\qquad\qquad\qquad $$ 
$$\sum _t(D_t^2+s(t))(X_t\otimes 1+1\otimes X_t)
\left (2B_t+\sum _ud_{ut}\tilde\eta _uX_u\otimes X_u\right )+2\delta ^+(g). $$

Follow the coefficient for $X_i\otimes X_i$, $1\leqslant i\leqslant u$. 

Verify that only the first sum contributes to this coefficient. 

Indeed, the second sum does not contribute because  
$B_t\operatorname{mod}\,I_{B\otimes B}(4)^{loc}$ is a linear combination of 
the terms $X_{s_1}\otimes X_{s_2}$ with $s_1\ne s_2$. 
The remaining big sum also does not contribute modulo $4I_{\bar B\otimes\bar B}^{loc}$ 
because:

--- $2B_t(X_t\otimes 1)$ and $2B_t(1\otimes X_t)$ contribute in the same way;

--- for similar reason it will be sufficient to verify that 
$X_i\otimes X_i$ can appear in 
$(\tilde\eta _uX_u\otimes X_u)(X_t\otimes 1)$ with coefficient from $2O$; 
but if this coefficient is not 0 then 
$u=t$ and we can use that $\eta _u\tilde\eta _u=-2$. 

So, for $1\leqslant i\leqslant u$,  
$(D_i^2+s(i))X_i\otimes X_i\in\tilde\eta I_{B\otimes B}\operatorname{mod}
I_{\bar B\otimes\bar B}(16)^{loc}$. 
Therefore, there is $\tilde o_i\in\tilde\eta O$ such that 
$(D_i^2+s(i)-\tilde o_i)\eta _i$ belongs to 
$4\bar O$.  
This proves part a). 
 
The remaining terms in the relation $\delta _{LT}(\tilde f)
\in\tilde\eta I_{B\otimes B}
\operatorname{mod}I_{\bar B\otimes\bar B}(16)^{loc}$ 
give 
$$\sum _i(o'_{i0}+o'_{i1}+D_i)B_i+\delta ^+(g)\in I_{B\otimes B}
\operatorname{mod}I_{\bar B\otimes\bar B}(4)^{loc}$$
(use property a) to eliminate the last big sum). 

We know that $h=\sum _i(o'_{i0}+o'_{i1}+D_i)\widetilde{B}_i+g$ 
is of the form $\sum _{r(\underline{i})\geqslant 2}C_{\underline{i}}
X^{\underline{i}}$. Therefore, the relation 
$\delta ^+h\in I_{B\otimes B}
\operatorname{mod}I_{\bar B\otimes\bar B}(4)^{loc}$ implies that 
$h\equiv h_0\operatorname{mod}I_{\bar B}(4)^{loc}$ with $h_0\in I_{B}$. 
So, replacing $f_0$ by $f_0-2h_0$ we obtain property b).
\end{proof}
\medskip 

\subsection{Explicit calculation of $l_{LT}(f)$} 
\label{S3.8} 

As earlier, $f\in\c H_{LT}(H_0,\mu _{\eta })$ is 
given via  Main Lemma where we can now assume that $D_1,\dots ,D_u\in \bar O$ 
and $g\in I^{loc}_{\bar B}$ satisfy Proposition \ref{P3.9}.  
We also set $\tilde f=[f]-[f_0]$,   
$L_{\bar B}=\sum _i\bar OX_i$,  
$L^{loc}_{\bar B}=\sum _{1\leqslant i\leqslant u_0}\bar OX_i+
\sum _{u_0<i\leqslant u}\bar\m X_i$ and define   
$$(\tilde\eta L_{\bar B})^{loc}=
\begin{cases} \tilde\eta L_{\bar B} & \mbox{if}\ \tilde\eta\notin O^* \\
 L_{\bar B}^{loc} & \mbox{if}\ \tilde\eta\in O^* 
\end{cases}
\qquad 
(\tilde\eta I_{\bar B}(4))^{loc}=
\begin{cases} \tilde\eta I_{\bar B}(4) & \mbox{if}\ \tilde\eta\notin O^* \\
 I_{\bar B}(4)^{loc} & \mbox{if}\ \tilde\eta\in O^* 
\end{cases}
$$
For $1\leqslant i\leqslant u$ and  
$\tilde o_i\in O$ from Proposition {\rm \ref{P3.9}},  
let $o'_{i2}\in O$ and $o'_{i3}\in\pi 'O$ be such that 
$\tilde o_i\equiv  o_{i2}'^2+o_{i3}'^2
\operatorname{mod}2\tilde\eta \pi O$ and 
$o_{i2}'^2, o_{i3}'^2\in\tilde o_iO$.

\begin{Prop} 
\label{P3.10} With above notation 
$$l_{LT}(\tilde f)\equiv  -\sum _{i,l}o'_{il}X_i+
\sum _{i,l}l_{LT}(o'_{il}X_i)
\operatorname{mod}((\tilde\eta L_{\bar B})^{loc}
+(\tilde\eta I_{\bar B}(4))^{loc}).$$ 
(In both sums $1\leqslant i\leqslant u$ and $0\leqslant l\leqslant 3$.)
\end{Prop}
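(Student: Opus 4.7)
The plan is to apply the logarithm $l_{LT}$ term-by-term to the decomposition of $\tilde f = [f] - [f_0]$ supplied by Main Lemma and then match the output against the right-hand side using the two identities of the Elaboration (Proposition \ref{P3.9}). Since $l_{LT}$ intertwines the Lubin-Tate group law with ordinary addition,
$$l_{LT}(\tilde f) = \sum_{i,\,l\in\{0,1\}} l_{LT}(o'_{il}X_i) + \sum_i l_{LT}(D_iX_i) + l_{LT}(2g),$$
and the claimed congruence is equivalent to showing that
$$\sum_i l_{LT}(D_iX_i) + l_{LT}(2g) + \sum_{i,\,l\in\{0,1\}}o'_{il}X_i - \sum_{i,\,l\in\{2,3\}}\bigl(l_{LT}(o'_{il}X_i) - o'_{il}X_i\bigr)$$
lies in $(\tilde\eta L_{\bar B})^{loc} + (\tilde\eta I_{\bar B}(4))^{loc}$.

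Next I would Taylor-expand each logarithm via $l_{LT}(X) = X + X^2/2 + X^4/4 + \cdots$, separating the linear and quadratic-in-$X_i$ parts from the fourth- and higher-order tails; the tails land in $(\tilde\eta I_{\bar B}(4))^{loc}$ because $D_iX_i\in I_{\bar B}(2)^{loc}$ and $g\in I^{loc}_{\bar B}$. The decisive computation is $\sum_i(D_iX_i)^2/2$: by Proposition \ref{P3.9}(a), $(D_i^2 + s(i) - \tilde o_i)X_i^2\in 4I_{\bar B}$ with $s(i) = \sum_s(o'_{s0}+o'_{s1}+D_s)d_{is}\tilde\eta_i$, so $(D_iX_i)^2/2 \equiv \tilde o_iX_i^2/2 - s(i)X_i^2/2 \pmod{2I_{\bar B}}$. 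Summing over $i$ and using $\tilde\eta_i\eta_i = -2$ together with the orthogonality $\sum_i d_{is}c_{ji} = \delta_{sj}$, the $-s(i)$-piece collapses to the linear combination $\sum_s(o'_{s0}+o'_{s1}+D_s)X_s$, whereas $\sum_i\tilde o_iX_i^2/2$ is arranged to match $\sum_{i,\,l\in\{2,3\}}(o'_{il}X_i)^2/2 = \sum_i(o_{i2}'^{2}+o_{i3}'^{2})X_i^2/2$ by the very choice $\tilde o_i\equiv o_{i2}'^{2}+o_{i3}'^{2}\pmod{2\tilde\eta\pi O}$; the residual $(\tilde o_i - o_{i2}'^{2}-o_{i3}'^{2})X_i^2/2$ lies in $\tilde\eta\pi\bar O\cdot X_i^2$, which via $X_i^2 = \eta_i\sum_jc_{ji}X_j$ sits in $(\tilde\eta L_{\bar B})^{loc}$.

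For $l_{LT}(2g)\equiv 2g$ modulo target tails, I would substitute Proposition \ref{P3.9}(b), namely $g\equiv\sum_ie_i\widetilde B_i \pmod{I_{\bar B}(4)^{loc}}$ with $e_i=o'_{i0}+o'_{i1}+D_i$, and exploit $2 = -\eta\tilde\eta$: the factor $\tilde\eta$ is pulled out, while the defining congruence $\sum_s\widetilde B_sc_{si}\equiv\tilde\eta_i(\widetilde R_i+\widetilde B_i^2)$ of Subsection \ref{S3.7} guarantees that $\eta\widetilde B_i\in I_{\bar B}(4)^{loc}$, placing the entire $2g$-contribution inside $(\tilde\eta I_{\bar B}(4))^{loc}$. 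After all cancellations the surviving linear contribution is the doubled sum $2\sum_s e_sX_s$, and using $2=-\eta\tilde\eta$ places it in $\tilde\eta L_{\bar B}\subset(\tilde\eta L_{\bar B})^{loc}$; in the multiplicative case $\tilde\eta\in O^*$ one instead sets $o'_{i0}=0$ by the remark after Main Lemma and uses $2\in\bar\m$ to land in $L_{\bar B}^{loc}$.

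The main obstacle is not the series manipulation but the careful tracking of the ``loc'' decoration and the index split $i\leqslant u_0$ versus $i>u_0$ at every substitution. In particular, verifying that the cross-terms $X_{s_1}X_{s_2}$ with $s_1,s_2>u_0$ that arise from $2\widetilde B_i$ really belong to $(\tilde\eta I_{\bar B}(4))^{loc}$ requires the full strength of property $({\bf C})$ of the matrix $C$ from Subsection \ref{S1.9.1}, together with Lemmas \ref{L3.5}--\ref{L3.6} to control the residuals of the Lubin-Tate polynomials $P_n$ and Corollary \ref{CC} to handle the monomial decomposition inside $\bar{\c J}$. Once this bookkeeping is in place, all quadratic and higher contributions are absorbed into one of the two prescribed ideals, completing the proof.
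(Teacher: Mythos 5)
Your overall skeleton is the paper's: split $\tilde f$ via Main Lemma, apply $l_{LT}$ termwise, and use Proposition \ref{P3.9}(a) to convert $\sum_i D_i^2X_i^2/2$ into $\sum_i\tilde o_iX_i^2/2-\sum_s(o'_{s0}+o'_{s1}+D_s)X_s$ and then match $\tilde o_i$ against $o_{i2}^{\prime 2}+o_{i3}^{\prime 2}$; that part of your computation is correct and is exactly how the paper proceeds. The gap is in your treatment of the higher terms. You declare that the ``fourth- and higher-order tails'' of $l_{LT}(D_iX_i)$ and everything beyond $2g$ in $l_{LT}(2g)$ land in $(\tilde\eta I_{\bar B}(4))^{loc}$, and that $2g$ itself does too. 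None of these claims holds in general, and they are precisely the point of this section (the paper flags this in Subsection \ref{SS}: one \emph{cannot} ignore the quadratic forms coming from $(D_iX_i)^4/4$ and from $g^2$ when $p=2$). Concretely, $(D_iX_i)^4/4=(D_i^2X_i^2/2)^2$ produces, via Proposition \ref{P3.9}(a), the terms $(o_{i2}^{\prime 2}X_i^2/2)^2+(o_{i3}^{\prime 2}X_i^2/2)^2$ --- which are the quartic terms of $l_{LT}(o'_{il}X_i)$, $l=2,3$, required on the right-hand side and unaccounted for if you truncate --- plus the genuinely non-negligible quadratic form $2\sum_sD_s^2\widetilde R_s$.

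On the $l_{LT}(2g)$ side, your claim that $\eta\widetilde B_i\in I_{\bar B}(4)^{loc}$ (hence $2g\in(\tilde\eta I_{\bar B}(4))^{loc}$) does not follow from the defining congruence for $\widetilde B_i$: unwinding it gives $\eta\widetilde B_s\equiv\sum_i\eta\tilde\eta_i(\widetilde R_i+\widetilde B_i^2)d_{is}$, and $\eta\tilde\eta_i=-2\tilde\eta_i/\tilde\eta$ need not lie in $2O$ since there is no divisibility relation between $\tilde\eta$ and $\tilde\eta_i$. What actually happens is a three-way cancellation that your truncation destroys: writing $l_{LT}(2g)\equiv 2(g+g^2)$, one uses Proposition \ref{P3.9}(b), the defining congruence $\sum_s\widetilde B_sc_{si}\equiv\tilde\eta_i(\widetilde R_i+\widetilde B_i^2)$ and Proposition \ref{P3.9}(a) again to get $g^2\equiv-g-\sum_tD_t^2\widetilde R_t$, so $l_{LT}(2g)\equiv-2\sum_tD_t^2\widetilde R_t\operatorname{mod}(\tilde\eta I_{\bar B}(4))^{loc}$, which kills the $+2\sum_sD_s^2\widetilde R_s$ from the quartic term above. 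Neither $2g$, nor $2g^2$, nor $2\sum_sD_s^2\widetilde R_s$ is individually in the modulus; only their combination vanishes. Your proposal therefore omits the essential mechanism of the proof rather than a bookkeeping detail.
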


\begin{proof} Note that (use that all $D_iX_i\in I_{\bar B}(2)$)
$$l_{LT}(D_iX_i)\equiv D_iX_i+\frac{D_i^2X_i^2}{2}+
\left (\frac{D_i^2X_i^2}{2}\right )^2+
2\left (\frac{D_i^2X_i^2}{2}\right )^4\operatorname{mod}4I_{\bar B}$$
Rewrite property a) of Proposition \ref{P3.9} in the following form 
$$\frac{D^2_iX_i^2}{2}\equiv\frac{\tilde o_iX_i^2}{2}
-\sum_{s}(o'_{s0}+o'_{s1}+D_s)d_{is}\tilde\eta _iX_i^2
\operatorname{mod}2L_{\bar B}$$

Then 
$$\sum _i\frac{D_i^2X_i^2}{2}\equiv 
\sum _i\frac{\tilde o_iX_i^2}{2}-\sum _s(o'_{s0}+o'_{s1}+D_s)
X_s\operatorname{mod}2L_{\bar B}$$
Property a) also implies that  
$$\left (\frac{D_i^2X_i^2}{2}\right )^2\equiv 
\left (\frac{\tilde o_i^2}{\tilde\eta _i}-\sum_s(o'_{s0}+o'_{s1}+D_s)
d_{is}\right )^2\left (X_i^2/\eta _i\right )^2
\operatorname{mod}4I_{\bar B}$$

The first factor in the right-hand side of the last congruence 
can be written in the form 
$$\left (\frac{o^{\prime 2}_{i2}}{\tilde\eta _i}\right )^2+
\left (\frac{o^{\prime 2}_{i3}}{\tilde\eta _i}\right )^2+
\sum _sD_s^2d^2_{is}+\tilde\eta A_i$$
where $A_i\in\bar O$ and 
belongs to the maximal ideal $\bar\m $ of $\bar O$ 
if $u_0<i\leqslant u$. Then 

$$\left (\frac{D_i^2X_i^2}{2}\right )^2\equiv 
\left (\frac{o_{i2}'^2X_i^2}{2}\right )^2+
 \left (\frac{o_{i3}'^2X_i^2}{2}\right )^2
+2\sum _{\substack{s\\ u_1<u_2}} D_s^2d_{is}^2X_{u_1}X_{u_2}c_{u_1i}c_{u_2i}$$
modulo $(\tilde\eta L_{\bar B})^{loc}
+2(\tilde\eta I_{\bar B})^{loc}$, because  
$$\sum _{i,s,u}D_s^2d^2_{is}X_u^2c^2_{ui}\equiv 
\sum _sD_s^2X_s^2\equiv 0\operatorname{mod}2L_{\bar B}.$$

Note also that 
$$2\left (\frac{D_i^2X_i^2}{2}\right )^4
=2\left (\frac{D_i^2\eta _i}{2}\right )^4\left (\sum _sX_s^2c^2_{si}
+2\sum _{s_1<s_2}X_{s_1}X_{s_2}c_{s_1i}c_{s_2i}\right )^2
$$
belongs to $2L_{\bar B}+4I_{\bar B}$. 

Taking above calculations together we obtain  
$$\sum _il_{LT}(D_iX_i)\equiv 
-\sum _i(o'_{i0}+
o'_{i1})X_i+
\sum _{i,l}\frac{o_{il}'^2X_i^2}{2}
+2\sum _sD_s^2\widetilde{R}_s\equiv $$
$$
-\sum_{i,l}o'_{il}X_i+
\sum _il_{LT}([o_{i2}'X_i]+[o_{i3}'X_i])
+2\sum _sD_s^2\widetilde{R}_s$$ 
modulo $(\tilde\eta L_{\bar B})^{loc}
+2(\tilde\eta I_{\bar B})^{loc}$ 
because all 
$2(o_{i2}^{\prime 2}X_i^2/2)^4$ and  
$2(o_{i3}^{\prime 2}X_i^2/2)^4$ belong to 
$2L_{\bar B}+4I_{\bar B}$. 

On the other hand we have:  
\medskip 

$g^2\equiv \sum _tD_t^2\wt{B}_t^2\operatorname{mod}(\tilde\eta I_{\bar B})^{loc}$, 
use Proposition \ref{P3.9}b);
\medskip 

$\sum _tD_t^2\wt{B}_t^2\equiv \sum _{t,s}(D_t^2/\tilde\eta _t)\wt{B}_sc_{st}
-\sum _tD_t^2\wt{R}_t\operatorname{mod}I_{\bar B}(4)^{loc}$, 
cf. Subsection \ref{S3.8}; 
\medskip 

$D_t^2/\tilde\eta _t\equiv -\sum _s(o'_{s0}+o'_{s1}+D_s)
d_{is}\operatorname{mod}\tilde\eta \bar O$, 
use Proposition \ref{P3.9}a).

Therefore, 
$$l_{LT}(2g)\equiv 2(g^2+g)\equiv -2\sum _tD_t^2\wt{R}_t
\operatorname{mod}(\tilde\eta I_{\bar B}(4))^{loc}$$
because $2(\tilde\eta I_{\bar B})^{loc}$, 
$2I_{\bar B}(4)^{loc}$, $2\tilde\eta I_{B\otimes B}^{loc}$ are contained in 
$(\tilde\eta I_{\bar B}(4))^{loc}$. 

The Proposition is proved. 
\end{proof} 
\medskip

\subsection {Special element $h\in I_{\bar B}$ and its properties} 
\label{S3.9} 

With above notation set 

$$h=[\tilde f]-\sum _{i,l}[o'_{il}X_i].$$

Note that $[2](h)=[2](f)-[2](f_0)-\sum _{i,l}[2](o'_{il}X_i)\in I_{B'}
\subset I_{\bar B}$ 
and, therefore, $l_{LT}(h)=(1/2)l_{LT}([2]h)\in I_{B'}\otimes _OK$. Then by 
Proposition \ref{P3.10}  
$$l_{LT}(h)\equiv\tilde\eta 'l_0\operatorname{mod}(\tilde\eta I_{B'}(4))^{loc},$$
where $l_0=\sum _ia_iX_i$ with  
$a_i\in O'$ and one has  
\begin{equation}\label{AAE3}
\tilde\eta 'a_i\equiv -\sum _lo'_{il}
\operatorname{mod}\tilde\eta O'.
\end{equation}

In addition, if $\tilde\eta\in O_0^*$ then $l_0\in I_{B'}^{loc}$. 

\begin{Prop} 
\label{P3.11} 

{\rm a)} $[2](h)\equiv 2\tilde\eta 'l_0
\operatorname{mod}2(\tilde\eta I_{B'})^{loc}$;
\medskip 

{\rm b)}\ $\delta _{LT}(h)\equiv \tilde\eta ' \delta ^+(l_0)
\operatorname{mod}(\tilde\eta 'I_{B'\otimes B'}(4))^{loc}$.
\end{Prop}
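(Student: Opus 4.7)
The plan is to deduce both parts from the congruence $l_{LT}(h) \equiv \tilde\eta' l_0 \pmod{(\tilde\eta I_{B'}(4))^{loc}}$ already established at the start of Subsection \ref{S3.9}, by transporting it under $[2]$ (resp.\ $\delta_{LT}$) and inverting $l_{LT}$ through the integral Lubin-Tate group law $P\in\Z_2[[X,Y]]$. Throughout, write $\varepsilon = l_{LT}(h) - \tilde\eta' l_0 \in (\tilde\eta I_{B'}(4))^{loc}$.

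For part (a), I use that $l_{LT}$ is the logarithm of the Lubin-Tate group, hence $l_{LT}([2]h) = 2\,l_{LT}(h) = 2\tilde\eta' l_0 + 2\varepsilon$. Expanding on the other side,
\[
l_{LT}(2\tilde\eta' l_0) = 2\tilde\eta' l_0 + \sum_{n\geq 1}\frac{(2\tilde\eta' l_0)^{2^n}}{2^n} = 2\tilde\eta' l_0 + \sum_{n\geq 1} 2^{2^n-n}\,\tilde\eta^{2^{n-1}}\,l_0^{2^n},
\]
where I used $(\tilde\eta')^{2^n}=\tilde\eta^{2^{n-1}}$. Each summand for $n\geq 1$ is divisible by $2\tilde\eta$, and lies in $2(\tilde\eta I_{B'})^{loc}$ (treating separately the cases $\tilde\eta\in O^*$, where $l_0\in I_{B'}^{loc}$ by hypothesis, and $\tilde\eta\notin O^*$, where $\tilde\eta$ already supplies the needed factor). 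Consequently $l_{LT}([2]h)\equiv l_{LT}(2\tilde\eta' l_0)\pmod{2(\tilde\eta I_{B'})^{loc}}$. Passing this through $P$ (which is integral), the Lubin-Tate difference $[2]h -_L 2\tilde\eta' l_0$ lies in $2(\tilde\eta I_{B'})^{loc}$; converting $-_L$ to $-$ introduces cross products that are already in the same ideal, giving (a).

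For part (b), since $\Delta$ is an $O$-algebra homomorphism and $l_{LT}$ is applied coefficient-wise,
\[
l_{LT}(\delta_{LT}(h)) \;=\; \delta^+(l_{LT}(h)) \;=\; \tilde\eta'\,\delta^+(l_0) + \delta^+(\varepsilon),
\]
and $\delta^+(\varepsilon)\in(\tilde\eta I_{B'\otimes B'}(4))^{loc}\subset(\tilde\eta' I_{B'\otimes B'}(4))^{loc}$ because $\tilde\eta'\mid\tilde\eta$. Expanding $l_{LT}(\tilde\eta'\delta^+(l_0))$ as above produces nonlinear corrections $\tilde\eta^{2^{n-1}}\delta^+(l_0)^{2^n}/2^n$ for $n\geq 1$, which must be shown to lie in $(\tilde\eta' I_{B'\otimes B'}(4))^{loc}$. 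Here Lemma \ref{L3.8} is essential: writing $\delta^+(l_0)=\sum_i a_i j_i$, the dominant part of each $j_i$ is $\sum_t\tilde\eta_t\,X_t\otimes X_t\,d_{ti}$, so $j_i^2$ contains $\tilde\eta_t^2\,X_t^2\otimes X_t^2 = -2\tilde\eta_t(\eta_tX_t^2)\otimes X_t^2$; the extra factor of $2$ precisely cancels the denominator $1/2$ and delivers integrality in the correct ideal. Inverting via the integral LT group law then yields the claim.

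The main obstacle is the sharp book-keeping in part (b): the target modulus $(\tilde\eta' I_{B'\otimes B'}(4))^{loc}$ is strictly tighter than the naive output of $l_{LT}^{-1}$, and the requisite cancellation must be extracted from the explicit structure of $j_i$ in Lemma \ref{L3.8} together with the identity $\eta_i\tilde\eta_i=-2$. This is where Proposition \ref{P3.9} (the elaboration of Main Lemma) enters: it pins down the coefficients $D_i$ and the higher-order part $g$ modulo the $(4)^{loc}$-ideals, which is precisely what is required for the squared terms in $\delta^+(l_0)$ to combine with the $1/2$ in $l_{LT}^{-1}$ and land inside $(\tilde\eta' I_{B'\otimes B'}(4))^{loc}$.
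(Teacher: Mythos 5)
Your overall strategy --- transport the congruence $l_{LT}(h)\equiv\tilde\eta'l_0$ under $[2]$ and $\delta_{LT}$ and then invert $l_{LT}$ --- is the paper's strategy, but your justification of the inversion has a genuine gap. Integrality of $P$ tells you that $d:=[2]h-_{L}2\tilde\eta'l_0$ is an integral power series in $[2]h$ and $2\tilde\eta'l_0$, i.e.\ lies in the ideal they generate; it does \emph{not} let you conclude $d\in 2(\tilde\eta I_{B'})^{loc}$ from $l_{LT}(d)\in 2(\tilde\eta I_{B'})^{loc}$. The missing ingredient is the a priori localization of $[2](h)$: by Main Lemma $h\in I_{B'}(2)^{loc}$, hence $[2](h)\in 2I_{B'}(2)^{loc}+\left(I_{B'}(2)^{loc}\right)^2\subset I_{B'}(4)^{loc}$, an ideal with \emph{topologically nilpotent} divided powers, on which $l_{LT}$ is bijective and preserves the DP-subideal $2(\tilde\eta'I_{B'})^{loc}$. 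That is what actually licenses the inversion, and the same input is needed to bound your ``cross products'' when converting $-_{L}$ to $-$, since those involve $[2]h$ itself. The paper's proof of a) runs exactly this way: first $[2](h)=2A$ with $A\in(\tilde\eta'I_{B'})^{loc}$ by DP-bijectivity, then $l_{LT}(2A)\equiv 2A\operatorname{mod}2(\tilde\eta I_{B'})^{loc}$.

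For b), your detour through Lemma \ref{L3.8} and Proposition \ref{P3.9} is a misdiagnosis: those results have already been consumed in establishing the input congruence for $l_{LT}(h)$ (via Proposition \ref{P3.10}), and given that input, part b) is formal and parallel to a). Main Lemma gives $\delta_{LT}(h)\in I_{B'}(2)^{loc}\otimes I_{B'}(2)^{loc}\subset I_{B'\otimes B'}(4)^{loc}$; DP-bijectivity of $l_{LT}$ on that ideal upgrades $l_{LT}(\delta_{LT}(h))\equiv\tilde\eta'\delta^+(l_0)$ to the membership $\delta_{LT}(h)\in(\tilde\eta'I_{B'\otimes B'}(4))^{loc}$; and then $\delta_{LT}(h)^2/2\in(\tilde\eta I_{B'\otimes B'}(4))^{loc}$ shows $l_{LT}$ is the identity modulo the target ideal, so one never needs to expand $l_{LT}(\tilde\eta'\delta^+(l_0))$ at all. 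The bound you seek on $\tilde\eta\,\delta^+(l_0)^2/2$ follows already from $j_i\in J_{B\otimes B}$ (so $\delta^+(l_0)\in I_{B'\otimes B'}(4)$), not from the fine structure of $j_i$. In short: right architecture, but the inversion of $l_{LT}$ must be supported by the nilpotent-DP localization of $[2](h)$ and $\delta_{LT}(h)$ supplied by Main Lemma, which your write-up omits.
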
 

\begin{proof} a) By Main Lemma, $h\in I_{B'}(2)^{loc}$ and, therefore,  
$[2](h)\in 2I_{B'}(2)^{loc}+
\left (I_{B'}(2)^{loc}\right )^2\subset I_{B'}(4)^{loc}$, cf. 
Subsection \ref{S2.2.3}. The standard DP-structure on this ideal is topologically 
nilpotent and $2(\tilde\eta 'I_{B'})^{loc}$  
is  its DP-subideal. Therefore, the congruence 
 $l_{LT}([2]h)\equiv 2\tilde\eta '
l_0\operatorname{mod}2(\tilde\eta I_{B'}(4))^{loc}$ implies that 
$[2](h)\in 2(\tilde\eta 'I_{B'})^{loc}$ (use that $l_{LT}$ is bijective on any 
ideal with nilpotent divided powers). If $[2](h)=2A$ with 
$A\in (\tilde\eta 'I_{B'})^{loc}$ then 
$l_{LT}([2](h))\equiv 2A\operatorname{mod}2(\tilde\eta I_{B'})^{loc}$ and, 
therefore,  
$A\equiv \tilde\eta 'l_0\operatorname{mod}(\tilde\eta I_{B'})^{loc}$. 

b) Similarly, Main Lemma implies that 
$$\delta _{LT}(h)\in I_{B'}(2)^{loc}\otimes I_{B'}(2)^{loc}
\subset I_{B'\otimes B'}(4)^{loc}.$$
Again this ideal has topologically nilpotent DP-structure 
and in addition we have  
$$l_{LT}(\delta _{LT}(h))=\delta ^+l_{LT}(h)\equiv \tilde\eta '\delta ^+l_0
\operatorname{mod}(\tilde\eta I_{B'\otimes B'}(4))^{loc}$$
Proceeding as in a) we obtain $\delta _{LT}(h)\in (\tilde\eta 'I_{B'\otimes B'}(4))^{loc}$.  
It remains to note that then  
$\delta _{LT}(h)^2/2\in (\tilde\eta I_{B'\otimes B'}(4))^{loc}$ and, therefore,  
$l_{LT}(\delta _{LT}(h))\equiv\delta _{LT}(h)
\operatorname{mod}(\tilde\eta I_{B'\otimes B'}(4))^{loc}$. 
\end{proof}

\begin{Prop} 
\label{P3.12} 
 For $1\leqslant i\leqslant u$, 
$\left (\sum _{l}o_{il}'^2\right )\eta _i\in\tilde\eta ^2O_0
\operatorname{mod}2\tilde\eta O.$
\end{Prop}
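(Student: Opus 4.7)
The plan is to compute the coefficient of $X_i$ in $[2](h)$ in two ways and compare, where $h=[\tilde f]-\sum_{i,l}[o'_{il}X_i]$ is the special element introduced in Subsection \ref{S3.9}. First, Proposition \ref{P3.11}(a) gives $[2](h)\equiv 2\tilde\eta'l_0\pmod{2(\tilde\eta I_{B'})^{loc}}$, so the $X_i$-coefficient equals $2\tilde\eta'a_i$ modulo $2\tilde\eta O$; by equation \eqref{AAE3}, $2\tilde\eta'a_i\equiv -2\sum_l o'_{il}\pmod{2\tilde\eta O'}$.

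Second, using the Lubin--Tate identity $[2](h)=[2](f)-[2](f_0)-\sum_{j,l}[2]([o'_{jl}X_j])$ and expanding $[2](aX_j)\equiv 2aX_j-a^2X_j^2$ modulo cubic terms in $a$ via $X_j^2=\eta_j\sum_s c_{sj}X_s$, the $X_i$-coefficient of $[2](h)$ equals (up to higher-order Lubin--Tate corrections)
$$\Gamma_i-2\tilde\eta\hat b_i+\tilde\eta^2\sum_j\hat b_j^2\eta_j c_{ij}-2\sum_l o'_{il}+\sum_j\eta_j c_{ij}\sum_l o_{jl}'^2,$$
where $\Gamma_i\in\tilde\eta^2O_0$ is the $X_i$-coefficient of $[2](f)$ (using $[2](f)\in\tilde\eta^2 I_{B_0}$), and $\hat b_j\in O$ come from $f_0=\tilde\eta\sum_j\hat b_jX_j+\cdots$. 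Here $2\tilde\eta\hat b_i\in 2\tilde\eta O$, and $\tilde\eta^2\sum_j\hat b_j^2\eta_j c_{ij}$ lies in $\tilde\eta^2O_0+2\tilde\eta O$ using the decomposition $\hat b_j^2\in O_0+2\pi O_0$ (from $\hat b_j=\hat b_j^{(0)}+\hat b_j^{(1)}\pi$) together with the inclusion $\tilde\eta^2\pi O_0\subset 2\tilde\eta O$ (which follows from $\tilde\eta\pi\in 2O$, since $\tilde\eta\mid 2$).

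Matching the two computations, the terms $-2\sum_l o'_{il}$ on each side cancel, producing
$$\sum_j c_{ij}\Bigl(\eta_j\sum_l o_{jl}'^2\Bigr)\in\tilde\eta^2O_0+2\tilde\eta O'.$$
Since the left-hand side lies in $O$ and $2\tilde\eta O'\cap O=2\tilde\eta O$ (as $O'=O\oplus\pi'O$), this refines to $\sum_j c_{ij}(\eta_j\sum_l o_{jl}'^2)\in\tilde\eta^2O_0+2\tilde\eta O$. Multiplying by the inverse matrix $D=C^{-1}\in\GL_u(O)$ and noting that each entry $d_{ij}\in O$ satisfies $d_{ij}\cdot(\tilde\eta^2O_0+2\tilde\eta O)\subset\tilde\eta^2O+2\tilde\eta O=\tilde\eta^2O_0+2\tilde\eta O$ (the last equality again by $\tilde\eta^2\pi O_0\subset 2\tilde\eta O$), one obtains the desired $\eta_i\sum_l o_{il}'^2\in\tilde\eta^2O_0+2\tilde\eta O$ for each $i$.

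The main obstacle is the careful bookkeeping of cubic and higher Lubin--Tate corrections in $[2](x)=2x-x^2+2x^3+\cdots$ applied to $x=o'_{jl}X_j$; these must be shown to lie in $\tilde\eta^2O_0+2\tilde\eta O$. The size bounds $o_{jl}'^2\in\tilde\eta O$ for $l\in\{0,1\}$ and $o_{jl}'^2\in\tilde o_j O\subset\tilde\eta_j O$ for $l\in\{2,3\}$ from Main Lemma and Proposition \ref{P3.9}, combined with $\tilde\eta_j\eta_j=-2$ (which gives $2\tilde\eta\eta_j^2\in 2\tilde\eta O$), show that the cubic contribution $2(o'_{jl}X_j)^3=2o_{jl}'^3\eta_j^2 c_{jj}c_{ij}X_i+\cdots$ to the $X_i$-coefficient lies in $2\tilde\eta O'$, and the same intersection with $O$ absorbs it into $2\tilde\eta O$; higher terms are even smaller.
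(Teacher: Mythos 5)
Your overall strategy is the same as the paper's: expand $[2](f)=[2](f_0)+[2](h)+\sum_{i,l}[2](o'_{il}X_i)$, use $[2](f)\in\tilde\eta^2I_{B_0}$ together with Proposition \ref{P3.11}(a) and \eqref{AAE3}, and compare. The paper, however, keeps the comparison at the level of whole elements of the ideals $\tilde\eta^2I_{B_0}$ and $2(\tilde\eta I_B)^{loc}$ (isolating $\sum_{i,l}[o_{il}'^2X_i^2]\in\tilde\eta^2I_{B_0}\operatorname{mod}2(\tilde\eta I_B)^{loc}$ and then splitting the coefficients and $X_i^2$ into $O_0$- and $\pi O_0$-components), whereas you pass immediately to the coefficient of $X_i$. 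That shortcut is where the argument breaks.

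Two steps fail, and both are exactly the ones responsible for the $O_0$-rationality that the proposition asserts. First, the inclusion $\tilde\eta^2\pi O_0\subset2\tilde\eta O$ is equivalent to $\tilde\eta\pi\in2O$, i.e.\ to $\eta\mid\pi$ in $O$; since $\eta\in O_0$ this forces $\eta\in O_0^*$ and fails otherwise (e.g.\ $\eta=\pi_0$ with $e\geqslant2$: then $\tilde\eta\pi=-2\pi/\pi_0\notin2O$). The justification ``since $\tilde\eta\mid2$'' gives the divisibility in the wrong direction. You use this both to control the $[2](f_0)$-term and, crucially, to claim $\tilde\eta^2O+2\tilde\eta O=\tilde\eta^2O_0+2\tilde\eta O$ in the final step of multiplying by $D=C^{-1}$; without it that step only yields $\eta_i\sum_lo_{il}'^2\in\tilde\eta^2O$, which is strictly weaker than the statement and insufficient for Proposition \ref{P3.13}(b). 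Second, you read off $\Gamma_i\in\tilde\eta^2O_0$ from $[2](f)\in\tilde\eta^2I_{B_0}$; but the coordinates $X_1,\dots,X_u$ come from the special construction of Subsection \ref{S1.9.1} for $\c N$ and are only defined over $O$ --- they do not lie in $B_0$ in general --- so the $X^{\underline{i}}$-coefficients of an element of $I_{B_0}$ lie a priori only in $O$. The same objection applies to extracting $O_0$-coefficients from $f_0^2$. This is precisely why the paper avoids monomial coefficients and instead uses the decomposition $O=O_0\oplus\pi O_0$, $I_B=I_{B_0}\oplus\pi I_{B_0}$ applied to whole elements, via $o_{i0}'^2,o_{i2}'^2\in\tilde\eta O_0+2\tilde\eta O$, $o_{i1}'^2,o_{i3}'^2\in\pi\tilde\eta O_0+2\tilde\eta O$ and $X_i^2\in\eta_iI_{B_0}+2I_B$.
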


\begin{proof} 
We know that $[2](f)\in\tilde\eta ^2I_{B_0}$. 
On the other hand, 
$$[2](f)=[2](f_0)+[2](h)+\sum _{i,l}[2](o'_{il}X_i),$$
where $f_0\in\tilde\eta I_B$. Note that:
\medskip 

 $[2](f_0)\equiv f_0^2\operatorname{mod}2\tilde\eta I_B$ implies that 
$[2](f_0)\in\tilde\eta ^2I_{B_0}\operatorname{mod}2(\tilde\eta I_{B})^{loc}$; 
\medskip 

 $[2](o'_{il}X_i)\equiv [o_{il}'^2X_i^2]+[2o'_{il}X_i]\operatorname
{mod} 2(\tilde\eta I_B)^{loc}$ 
(use that all $o_{il}'^2\in\tilde\eta O$);
\medskip 

 $[2](h)\equiv -\sum _{i,l}[2o'_{il}X_i]
\operatorname{mod}2(\tilde\eta I_{B'})^{loc}$ 
by the above proposition.  
\medskip 

So, 
\begin{equation}\label{AAE2} 
\sum _{i,l}[o_{il}'^2X_i^2]\in 
\tilde\eta ^2I_{B_0}\operatorname{mod}2(\tilde\eta I_B)^{loc}. 
\end{equation} 
Note that for all $i$ (use just that $\pi ^2=\pi _0$), 
\medskip 

$o_{i0}'^2, o_{i2}'^2\in\tilde\eta O_0+2\tilde\eta O$; 
\medskip 
  
 $o_{i1}'^2, o_{i3}'^2\in\pi\tilde\eta O_0+2\tilde\eta O$; 
\medskip 
 
$X_i^2\in\eta _iI_{B_0}+2I_B$.
\medskip 

This implies that:
$$\sum _i\left ([o_{i0}'^2X_i^2]+[o_{i2}'^2X_i^2]\right )
\equiv\sum _i\left (o_{i0}'^2+o_{i2}'^2\right )X_i^2\operatorname{mod}
(\tilde\eta ^2I_{B_0}+2\tilde\eta I_B)$$
$$\sum _i\left ([o_{i1}'^2X_i^2]+[o_{i3}'^2X_i^2]\right )
\equiv\sum _i\left (o_{i1}'^2+o_{i3}'^2\right )X_i^2\operatorname{mod}
(\tilde\eta ^2I_{B_0}+2\tilde\eta I_B)$$

Clearly, 
$\sum _i\left (o_{i0}'^2+o_{i2}'^2\right )X_i^2\in\tilde\eta I_{B_0}+2\tilde\eta I_B$ 
and 
$\sum _i\left (o_{i0}'^2+o_{i2}'^2\right )X_i^2\in\pi \tilde\eta I_{B_0}+2\tilde\eta I_B$. 
Therefore, (\ref{AAE2}) implies that for any $1\leqslant i\leqslant u$, 
$(o_{i0}'^2+o_{i2}'^2)\eta _i\in\tilde\eta O_0+2\tilde\eta O$ and 
$(o_{i1}'^2+o_{i3}'^2)\eta _i\in 2\tilde\eta O$. This proves 
the proposition. 
\end{proof}
\medskip

\subsection{Introducing $\C M=\C M(G)\in\MF ^e_S$} 
\label{S3.10} 

Choose a rings identification  
$\kappa _{S'O'}:S'\operatorname{mod}t^e\To O'\operatorname{mod}2$ 
such that 
$\kappa _{S'O'}|_{S\operatorname{mod}t^e}=\kappa _{SO}$. Consider 
$l_0=\sum _ia_iX_i\in I_{B'}$ from Subsection \ref{S3.9} and set 
$$l_1=\sum _ib_iX_i\equiv 
(-l_0-\tilde\eta 'l_0^2-\dots -\tilde\eta '^{2^n-1}l_0^{2^n}- 
\dots )\operatorname{mod}2(\tilde\eta 'I_{B'})^{loc}$$
Let $\tilde s',\alpha _1,\dots ,\alpha _u\in S'$ be such that 
$\kappa _{S'O'}(\tilde s'\operatorname{mod}t^e)=\tilde\eta '\operatorname{mod}2$ and 
$\kappa _{S'O'}(\alpha _i\operatorname{mod}t^e)=b_i\operatorname{mod}2$ 
for $1\leqslant i\leqslant u$. 

Introduce $\C M'=\C M'(G)=(M'^0, M'^1,\varphi _1')\supset\C N':=\C N\otimes _SS'$ 
such that $M'^0=m'S'\oplus (N^0\otimes _SS')$, 
$M'^1=m'^1S'+(N^1\otimes _SS')\subset M'^0$ with 
$m'^1=\tilde s'm'+\sum _i\alpha _in_i$, and $\varphi _1'(m'^1)=m'$. 

\begin{Prop} 
\label{P3.13} 
 {\rm a)}\ $\C M'\in\MF ^{2e}_{S'}$;
\medskip 

{\rm b)}\ there is $\C M=\C M(G)\in\MF ^e_S$ such that $\C M'=\C M\otimes _SS'$.
\end{Prop}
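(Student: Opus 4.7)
The approach is to verify part~(a) directly from the construction, and then apply the descent criterion of Proposition~\ref{P1.3} for part~(b); the key input is the descent of the coefficients defining $m'^1$ from $S'$ down to $S$.

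For part~(a), freeness of $M'^0=m'S'\oplus(N^0\otimes_SS')$ over $S'$ is immediate, and $\varphi'_1(m'^1)=m'$ combined with the analogous property for $\mathcal{N}\in\MF^e_S$ yields $\varphi'_1(M'^1)S'=M'^0$. The nontrivial condition $M'^1\supset t'^{2e}M'^0=t^eM'^0$ reduces to $t^em'\in M'^1$, which, using $m'^1=\tilde s'm'+\sum_i\alpha_in_i$, amounts to two divisibilities in $S'$: $\tilde s'\mid t^e$ (immediate, since $\tilde\eta'\mid\pi^e$ in $O'$, as $\tilde\eta'^2=\tilde\eta\mid 2$) and $\tilde s_i\tilde s'\mid t^e\alpha_i$ for each $i$. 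The latter, transported via $\kappa_{S'O'}$, is a congruence on $b_i$ that follows from Proposition~\ref{P3.12} together with the definition of $b_i$ from Subsection~\ref{S3.10}.

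For part~(b), I apply Proposition~\ref{P1.3} with $M^0:=\varphi'_1(M'^1)$ and $M^1:=M'^1\cap M^0$. Since $k$ is perfect of characteristic $2$ with $t=t'^2$, the Frobenius satisfies $\sigma(S')=S$, so $M^0$ inherits a canonical $S$-module structure. A direct computation using $\varphi'_1(cm'^1+v_N)=c^2m'+\varphi'_1(v_N)$ for $c\in S'$, $v_N\in N^1\otimes_SS'$ --- with $c^2$ ranging over all of $S$ --- yields $M^0=Sm'\oplus N^0$, a free $S$-module of the expected rank. The inclusion $M^1\supset t^eM^0$ is then inherited from $t^em'\in M'^1$ (part~(a)) combined with the obvious $t^em'\in M^0$.

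The heart of part~(b) is to identify $M^1$ so that the three axioms for $\mathcal{M}\in\MF^e_S$ hold. Since $\tilde\eta'\in O$, one takes $\tilde s'=\tilde s\in S\subset S'$ to be the element of $S$ corresponding to $\tilde\eta'$ via $\kappa_{SO}$. Writing each $\alpha_i=\alpha_i^{(0)}+\alpha_i^{(1)}t'$ with components in $S$, the descent step consists in showing that $\sum_i\alpha_i^{(1)}t'n_i$ lies in $N^1\otimes_SS'$, equivalently $\tilde s_i\mid\alpha_i^{(1)}t'$ in $S'$ for each $i$; via $\kappa_{S'O'}$ this is a divisibility $\tilde\eta_i'\mid b_i^{(1)}\pi'$ in $O'/2$, where $b_i^{(1)}$ denotes the coefficient of $\pi'$ in $b_i$. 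This follows by combining Proposition~\ref{P3.12}, the parity structure $o'_{i0},o'_{i2}\in O$ versus $o'_{i1},o'_{i3}\in\pi'O$ from Main Lemma and its elaboration, and the explicit formulas for $a_i,b_i$ from Subsections~\ref{S3.9}--\ref{S3.10}. Setting $m^1:=m'^1-\sum_i\alpha_i^{(1)}t'n_i\in M'^1\cap M^0$ produces an element $m^1=\tilde sm'+\sum_i\alpha_i^{(0)}n_i$ with coefficients in $S$, so that $M^1=Sm^1+N^1$; the three axioms of $\MF^e_S$ for $\mathcal{M}=(M^0,M^1,\varphi_1)$ then follow and Proposition~\ref{P1.3} delivers $\mathcal{M}'=\mathcal{M}\otimes_SS'$. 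The main obstacle is this descent step, which is precisely where the hypothesis $G_0\in\Gr_{O_0}$ (rather than merely $G\in\Gr_O$) enters the argument, mediated by Proposition~\ref{P3.12}, itself dependent on the $O_0$-descent of $G$.
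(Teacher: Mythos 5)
Your proposal is correct and follows essentially the same route as the paper: part (a) is reduced to the divisibility condition encoding $t^em'\in M'^1$, and part (b) is reduced via the descent criterion of Proposition \ref{P1.3} to the condition $\alpha_i\in S\operatorname{mod}\tilde s_iS'$, both of which are transported through $\kappa_{S'O'}$ and the relation \eqref{AAE3} to congruences on $\sum_l o'_{il}$ that Proposition \ref{P3.12} (together with the parity split $o'_{i0},o'_{i2}\in O$, $o'_{i1},o'_{i3}\in\pi'O$) supplies. Your write-up is merely more explicit than the paper's about the intermediate objects $M^0=\varphi_1'(M'^1)$ and the representative $m^1$.
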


\begin{proof} a) $\C M'\in\MF ^{2e}_{S'}$ means that $t^em'\in M'^1$, i.e. 
for $1\leqslant i\leqslant u$, one has  
 $t^e\tilde s^{-1}\alpha _i\equiv 0\operatorname{mod}
\tilde s_i$ or, equivalently, 
$$(t^e\tilde s_i^{-1})\alpha _i\equiv 0\operatorname{mod}\tilde s.$$
 Using (\ref{AAE3}) and the identification $\kappa _{S'O'}$ 
we can rewrite these congruences in the form 
$\left (\sum _lo'_{il}\right )\eta _i'\equiv 0\operatorname{mod}\tilde\eta $ 
or equivalently, $\left (\sum _lo^{\prime 2}_{il}\right )
\eta _i\equiv 0\operatorname{mod}\tilde\eta ^2$. Clearly, 
these conditions follow from Proposition \ref{P3.12}. 

b) The criterion of the existence of a descent $\C M$ of $\C M'$ to $S$ 
from Proposition \ref{P1.3} can be specified in our case in the following form:
for all $1\leqslant i\leqslant u$, 
$\alpha _i\in S\operatorname{mod}\tilde s_iS'$.

Using the identification $\kappa _{S'O'}$ we rewrite these conditions in the form 
$\tilde b_i\operatorname{mod}\tilde\eta _i'\in O\operatorname{mod}\tilde\eta '$.  
By (\ref{AAE3}) we can replace them by 
$\sum _lo'_{il}\in O\operatorname{mod}\tilde\eta _i'\tilde\eta '$, then by 
$\sum _lo_{il}'^2\in O_0\operatorname{mod}\tilde\eta _i\tilde\eta $ and finally by 
$(\sum _lo'^2_{il})\eta _i\in O_0\operatorname{mod}2\tilde\eta O$. 
But this is given again by Proposition \ref{P3.12}. 
\end{proof}

\subsection{Construction of isomorphism $\C G_O(\C M)\simeq G$} 
\label{S3.11} 

We know that  $G=\Spec A$, where $A=B[X]$, 
$[2](\tilde\eta X)=[2](f)\in (\tilde\eta ^2I_{B_0}^{loc}\subset 
(\tilde\eta ^2I_B)^{loc}$ 
and $\delta _{LT}(\tilde\eta X)=\delta _{LT}(f)\in (\tilde\eta I_{B_0\otimes B_0})^{loc}
\subset (\tilde\eta I_{B\otimes B})^{loc}$.

Let $A'=A\otimes _OO'$ and let  
$Z\in I_{A'}$ be such that $\tilde\eta 'Z=
[\tilde\eta X]-\sum _{i,l}[o'_{il}X_i]$. Note that 
\begin{equation} \label{E3.7} 
 Z+\tilde\eta '^{-1}\sum _{i,l}o'_{il}X_i\equiv 
Z-l_0\equiv Z+l_1\equiv 0\operatorname{mod}\tilde\eta 'I_{A'}
\end{equation}

\begin{Prop} 
\label{P3.14} 
{\rm a)} $\tilde\eta 'Z\in (\tilde\eta 'I_{A'}(2))^{loc}$;
\newline 
{\rm b)} $\delta ^+(Z-l_0)\equiv \tilde\eta ' I_{A'\otimes A'}(4)$;
\newline 
{\rm c)}$\tilde\eta ' (Z^2/2)\equiv Z+l_1\operatorname{mod} 
(\tilde\eta 'I_{A'}(4))^{loc}$.
\end{Prop}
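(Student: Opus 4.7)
The proof systematically transfers the properties of the element $h = [\tilde f] - \sum_{i,l}[o'_{il}X_i] \in I_{B'}$ established in Proposition \ref{P3.11} to the element $\tilde\eta' Z = [\tilde\eta X] - \sum_{i,l}[o'_{il}X_i] \in I_{A'}$, exploiting their parallel construction. The bridge is the identity $[\tilde\eta X] - [\tilde f] = [\tilde\eta X] - [f] + [f_0]$, combined with the defining relation $[2](\tilde\eta X) = [2](f)$ of $G$: by additivity of $[2]$ and $\delta_{LT}$ in the Lubin-Tate group we obtain
\begin{equation*}
[2](\tilde\eta' Z) = [2](h) + [2](f_0), \qquad \delta_{LT}(\tilde\eta' Z) = \delta_{LT}(h) + \delta_{LT}(f_0).
\end{equation*}
Since $f_0\in\tilde\eta I_B$, both correction terms $[2](f_0)$ and $\delta_{LT}(f_0)$ lie in ideals small enough to be absorbed into the error terms allowed by the statement.

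For part a), the first identity above combined with Proposition \ref{P3.11}a) yields $[2](\tilde\eta' Z)\equiv 2\tilde\eta' l_0 \operatorname{mod} 2(\tilde\eta I_{A'})^{loc}$. Since the standard divided powers on $(\tilde\eta' I_{A'}(2))^{loc}$ are topologically nilpotent and $l_{LT}$ is bijective on such ideals, the argument of Proposition \ref{P3.11}a) applies verbatim and yields $\tilde\eta' Z\in (\tilde\eta' I_{A'}(2))^{loc}$. For part b), the second identity together with Proposition \ref{P3.11}b) places $\delta_{LT}(\tilde\eta' Z)$ within $\tilde\eta'\delta^+(l_0) + (\tilde\eta' I_{A'\otimes A'}(4))^{loc}$, once one verifies $\delta_{LT}(f_0)\in(\tilde\eta' I_{A'\otimes A'}(4))^{loc}$ directly from $f_0\in\tilde\eta I_B$ via the expansion \eqref{E3.1} and the identities $\tilde\eta = \tilde\eta'^2$, $\tilde\eta\eta = -2$. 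Part a) then allows us to replace $\delta_{LT}(\tilde\eta' Z)$ by $\delta^+(\tilde\eta' Z)$ modulo quadratic corrections contained in the same ideal (cf.\ Lemma \ref{L3.6}), and dividing by $\tilde\eta'$ gives $\delta^+(Z - l_0)\in \tilde\eta' I_{A'\otimes A'}(4)$.

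For part c), expand $l_{LT}(\tilde\eta' Z) = \tilde\eta' Z + (\tilde\eta' Z)^2/2 + \sum_{n\geq 2}(\tilde\eta' Z)^{2^n}/2^n$, where part a) forces the terms of degree $\geq 4$ into $(\tilde\eta' I_{A'}(4))^{loc}$. Separately, inverting the $[2]$-congruence from part a) via $l_{LT}(\tilde\eta' Z) = \tfrac{1}{2} l_{LT}([2](\tilde\eta' Z))$ gives $l_{LT}(\tilde\eta' Z)\equiv \tilde\eta' l_0\operatorname{mod}(\tilde\eta' I_{A'}(4))^{loc}$. Matching the two expressions yields $\tilde\eta' Z + (\tilde\eta' Z)^2/2 \equiv \tilde\eta' l_0\operatorname{mod}(\tilde\eta' I_{A'}(4))^{loc}$; iterative substitution of the defining series $l_1 \equiv -l_0 - \tilde\eta' l_0^2 - \dots \operatorname{mod}2(\tilde\eta' I_{B'})^{loc}$ together with $Z\equiv l_0\operatorname{mod}\tilde\eta' I_{A'}$ from \eqref{E3.7} then reorganizes the right-hand side into $Z + l_1$. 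The main obstacle is this final step: each term $\tilde\eta'^{2^n-1} l_0^{2^n}$ in the $l_1$-series must be absorbed by matching a contribution from $(\tilde\eta' Z)^{2^n}/2^n$ modulo the target ideal, which requires careful control of denominators and of the passage between $B'$-valued and $A'$-valued congruences at each order in the Lubin-Tate expansion.
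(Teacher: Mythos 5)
Your overall strategy --- transferring Proposition \ref{P3.11} from $h$ to $\tilde\eta 'Z$ and then unwinding the Lubin-Tate logarithm --- is the paper's, but the crux of the proposition is part a), and there your argument has a genuine gap. The proof of Proposition \ref{P3.11}a) does \emph{not} apply ``verbatim'': it begins with the fact, supplied by Main Lemma, that $h\in I_{B'}(2)^{loc}$, so that $[2](h)$ lands in $I_{B'}(4)^{loc}$, an ideal with topologically nilpotent divided powers, and only then is bijectivity of $l_{LT}$ invoked. For $\tilde\eta 'Z$ the analogous membership $\tilde\eta 'Z\in (\tilde\eta 'I_{A'}(2))^{loc}$ is precisely what part a) asserts, so quoting ``$l_{LT}$ is bijective on such ideals'' presupposes the conclusion. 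What the congruence $[2](\tilde\eta 'Z)\equiv 2\tilde\eta 'l_0\operatorname{mod}2(\tilde\eta I_{B'})^{loc}$ yields directly, via $[2](w)\equiv 2w+w^2+\dots$ and \eqref{E3.7}, is only $(\tilde\eta 'Z)^2\in 2(\tilde\eta 'I_{A'})^{loc}$, i.e.\ $\tilde\eta ^{\prime 1/2}Z\in I_{A'}(2)^{loc}$, which settles the case $\tilde\eta\in O_0^*$ but is weaker by a factor $\tilde\eta ^{\prime 1/2}$ otherwise. The paper closes this gap, when $\tilde\eta\notin O_0^*$, by a separate argument you do not have: the logarithm congruence $l_{LT}(\tilde\eta 'Z)\equiv\tilde\eta 'l_0\operatorname{mod}(\tilde\eta I_{B'}(4))^{loc}$ is rewritten as $Z-l_0+\tilde\eta '(Z^2/2)(1+\tilde\eta Z^2/2)\in (\tilde\eta 'I_{A'}(4))^{loc}$ (formula \eqref{E3.8}), and since $Z-l_0\in\tilde\eta 'I_{A'}$ by \eqref{E3.7} and $1+\tilde\eta Z^2/2$ is invertible, one extracts $Z^2/2\in I_{A'}$. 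Parts b) and c) lean on this: b) needs $Z\in I_{A'}(2)$ to pass from $\delta _{LT}(\tilde\eta 'Z)$ to $\delta ^+(\tilde\eta 'Z)$, and c) is nothing but the iteration of \eqref{E3.8}, giving $\tilde\eta '(Z^2/2)\equiv Z-l_0-\tilde\eta 'l_0^2-\dots -\tilde\eta ^{\prime\,2^n-1}l_0^{2^n}-\dots\equiv Z+l_1$; this is exactly the ``matching of denominators'' that you flag as an unresolved obstacle rather than carry out.

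A second, smaller point: the correction terms $[2](f_0)$ and $\delta _{LT}(f_0)$ are not negligible in the way you claim. Since $f_0\in\tilde\eta I_B$ one only has $[2](f_0)\equiv f_0^2\operatorname{mod}2\tilde\eta I_B$ with $f_0^2\in\tilde\eta ^2I_B$, and $\tilde\eta ^2I_B\subset 2(\tilde\eta I_{A'})^{loc}$ would require $\tilde\eta ^2/(2\tilde\eta )=-1/\eta\in O$, i.e.\ $\eta\in O^*$. The paper works with $[2](\tilde\eta 'Z)=[2](h)$ on the nose --- effectively the subtraction of $f_0$ is absorbed into the normalization of the Kummer generator (compare the final adjustment of $f_0$ at the end of the proof of Proposition \ref{P3.9}). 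If you prefer to carry $f_0$ along explicitly, you must either modify the definition of $Z$ by $[f_0]$ or track the resulting $\tilde\eta ^2I_B$-contribution through all three parts; simply declaring it ``small enough'' is false.
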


\begin{proof} a) By Proposition \ref{P3.11}a), 
$[2](\tilde\eta 'Z)=[2](h)\equiv 2\tilde\eta 'l_0\operatorname{mod}
2(\tilde\eta I_{B'})^{loc}$. 
This implies $(\tilde\eta 'Z)^2\in 2(\tilde\eta 'I_{A'})^{loc}$, i.e. 
$\tilde\eta ^{\prime 1/2}Z\in I_{A'}(2)^{loc}$. If 
$\tilde\eta\in O_0^*$ then a) is proved. Continue with the following 
congruence 
$$l_{LT}(\tilde\eta 'Z)=l_{LT}(h)\equiv 
\tilde\eta 'l_0\operatorname{mod}(\tilde\eta I_{B'}(4))^{loc}$$
(cf. Proposition \ref{P3.10}). It implies that 
\begin{equation}\label{E3.8} 
Z-l_0+\tilde\eta '(Z^2/2)(1+\tilde\eta Z^2/2)\in (\tilde\eta 'I_{A'}(4))^{loc}.
\end{equation} 
By \eqref{E3.7} $(Z-l_0)/\tilde\eta '\in I_{A'}$ and 
$1+\tilde\eta (Z^2/2)\equiv 1\operatorname{mod}\tilde\eta 'I_{A'}$ 
is invertible 
(use that $\tilde\eta \notin O_0^*$). Therefore, 
$Z^2/2\in I_{A'}$ and a) is completely proved.

b) By proposition \ref{P3.11}b),  
$\delta _{LT}(\tilde\eta 'Z)\equiv \tilde\eta '\delta ^+l_0\operatorname{mod}
\tilde\eta I_{B'\otimes B'}(4)$. It remains to note that $Z\in I_{A'}(2)$ implies that 
$$\delta _{LT}(\tilde\eta 'Z)\equiv\delta ^+(\tilde\eta 'Z)
\operatorname{mod}\tilde\eta I_{A'\otimes A'}(4).$$

c)\ Iterating (\ref{E3.8}) we obtain 
$$\tilde\eta '(Z^2/2)\equiv Z-l_0-\tilde\eta '(\tilde\eta 'Z^2/2)^2\equiv 
Z-l_0-\tilde\eta 'l_0^2-\tilde\eta '^3(\tilde\eta 'Z^2/2)^4$$
$$\equiv Z-l_0-\tilde\eta 'l_0^2-\dots -\tilde\eta '^{2^n-1}l_0^{2^n}-\dots 
\equiv Z+l_1\operatorname{mod}\,(\tilde\eta 'I_{A'}(4))^{loc}.$$
The proposition is proved. 
\end{proof} 

Consider the ideals $J_{A'}$ and $J_{A'\otimes A'}$,  
cf. Subsection \ref{S1.3}. 

\begin{Cor}\label{C3.15} 
There is $\wt{Z}\in I_{A'}$ such that 

{\rm a)} $\wt{Z}\equiv (\tilde\eta 'I_{A'}(4))^{loc}$;

{\rm b)} $\tilde\eta '(\wt Z^2/2)\equiv\wt{Z}+l_1
\operatorname{mod}\tilde\eta 'J_{A'}$; 

{\rm c)} 
 $\delta ^+(\wt{Z}), \delta ^+(\wt{Z}^2/2)\in J_{A'\otimes A'}$.
\end{Cor}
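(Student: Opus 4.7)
The plan is to define $\widetilde{Z} = Z + c$ as a small perturbation of the element $Z$ produced by Proposition~\ref{P3.14}, where $c \in (\tilde\eta' I_{A'}(4))^{loc}$. Condition~(a) then holds tautologically, and the task reduces to choosing $c$ so that the congruences in (b) and (c) — which for $Z$ itself only hold modulo $(\tilde\eta' I_{A'}(4))^{loc}$ and $\tilde\eta' I_{A'\otimes A'}(4)$ respectively — sharpen to congruences modulo the DP-ideals $\tilde\eta' J_{A'}$ and $J_{A'\otimes A'}$.

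Set $\epsilon := \tilde\eta'(Z^2/2) - Z - l_1 \in (\tilde\eta' I_{A'}(4))^{loc}$. The identity $(Z+c)^2/2 = Z^2/2 + Zc + c^2/2$ shows that property (b) for $\widetilde Z = Z + c$ is equivalent to
$$c - \tilde\eta' Z c - \tilde\eta'(c^2/2) \equiv \epsilon \pmod{\tilde\eta' J_{A'}}.$$
I would solve this inside $(\tilde\eta' I_{A'}(4))^{loc}$ by reducing modulo $\tilde\eta' J_{A'}$ to a $\sigma$-linear equation on a finite-length module over $k$. Proposition~\ref{P3.14}(a) gives $\tilde\eta' Z \in (\tilde\eta' I_{A'}(2))^{loc}$, so the operator $c \mapsto \tilde\eta' Z c + \tilde\eta'(c^2/2)$ is topologically nilpotent on the quotient $(\tilde\eta' I_{A'}(4))^{loc}/\tilde\eta' J_{A'}$. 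Lemma~\ref{L1.1d} then yields a $c$ solving the congruence, establishing (a) and (b).

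For (c), I would derive the first claim $\delta^+(\widetilde Z) \in J_{A'\otimes A'}$ from Proposition~\ref{P3.14}(b), which gives $\delta^+(Z) \equiv \delta^+(l_0) \pmod{\tilde\eta' I_{A'\otimes A'}(4)}$, combined with the linearity of $l_0 = \sum_i a_i X_i$: since each $\delta^+(X_i) = j_i$ lies in $J_{B\otimes B}$ by the special construction in Subsection~\ref{S1.9.1}, we get $\delta^+(l_0) \in J_{A'\otimes A'}$, and the correction $\delta^+(c)$ lies in $\tilde\eta' J_{A'\otimes A'}$ since $c$ is small in $(\tilde\eta' I_{A'}(4))^{loc}$. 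The second claim $\delta^+(\widetilde Z^2/2) \in J_{A'\otimes A'}$ then follows by applying $\delta^+$ to (b), which identifies $\tilde\eta'(\widetilde Z^2/2)$ with $\widetilde Z + l_1$ modulo $\tilde\eta' J_{A'}$; here one uses that $l_1$, defined as an explicit power series in $l_0$ modulo $2(\tilde\eta' I_{B'})^{loc}$, inherits $\delta^+(l_1) \in J_{A'\otimes A'}$ via the DP-structure on $J$.

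The main delicate point is solving the fixed-point equation for $c$ inside $(\tilde\eta' I_{A'}(4))^{loc}$ rather than in some larger ambient ideal: this requires that both the error $\epsilon$ and the correcting operator preserve the distinction between $(-)^{loc}$ and $(-)$, which tracks back to the block structure of the matrix $C$ recorded as property~${\bf (C)}$ in Subsection~\ref{S1.9.1} and to the size estimate of Proposition~\ref{P3.14}(a). Once this bookkeeping is in place, the existence of $c$ is a routine successive-approximation argument and properties (b), (c) emerge simultaneously.
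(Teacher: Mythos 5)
Your route is genuinely different from the paper's, and the comparison is instructive. The paper does not perturb $Z$ at all: it simply \emph{defines} $\wt{Z}:=\tilde\eta'(Z^2/2)-l_1$, so that $\tilde\eta'(Z^2/2)=\wt{Z}+l_1$ holds exactly; part a) is then Proposition \ref{P3.14}c), part b) follows from the single observation that $(\tilde\eta'I_{A'}(4))^{loc}\subset I_{A'}(2)$ forces $\wt{Z}^2/2\equiv Z^2/2\operatorname{mod}J_{A'}$, and part c) is obtained by first proving $\delta^+(Z^2/2)\in J_{A'\otimes A'}$ (from $\delta^+Z\in I_{A'\otimes A'}(4)$ and $Z\in I_{A'}(2)$, so all cross terms land in $I_{A'\otimes A'}(2)^2\subset J_{A'\otimes A'}$) and then reading off $\delta^+\wt{Z}$ from the defining formula. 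Your fixed-point equation $c-\tilde\eta'Zc-\tilde\eta'(c^2/2)\equiv\epsilon$ is set up correctly, but two remarks are in order. First, Lemma \ref{L1.1d} does not literally apply: your correcting operator is the sum of a linear term and a Frobenius-quadratic term, not a $\sigma$-linear one. Second, and more to the point, no iteration is needed: for $c\in(\tilde\eta'I_{A'}(4))^{loc}$ one has $Zc\in I_{A'}(2)^2\subset J_{A'}$ and $\tilde\eta'(c^2/2)\in 2\tilde\eta\tilde\eta'I_{A'}\subset\tilde\eta'J_{A'}$, so the operator vanishes on the quotient and $c=\epsilon$ solves the equation outright --- which recovers exactly the paper's $\wt{Z}$. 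The one place where your write-up is genuinely thinner than the paper is part c): the assertion that $\delta^+(c)\in\tilde\eta'J_{A'\otimes A'}$ ``since $c$ is small'' is not automatic --- it requires the containment $\tilde\eta'I_{A'\otimes A'}(4)\subset J_{A'\otimes A'}$ and the compatibility of $\delta^+$ with these coefficientwise ideals; the paper sidesteps this by routing everything through $\delta^+(Z^2/2)$, and you should either do the same or supply that containment explicitly.
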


\begin{proof} 
 Let $\wt{Z}=\tilde\eta '(Z^2/2)-l_1$.

Part a) follows directly from Proposition \ref{P3.14}. 

Because $(\tilde\eta 'I_{A'}(4))^{loc}\subset I_{A'}(2)$ this implies that 
\begin{equation}\label{Ee}
 \wt{Z}^2/2\equiv Z^2/2\operatorname{mod}J_{A'}
\end{equation}
and we obtained part b). 

By Proposition \ref{P3.14}b), $\delta ^+Z\in I_{A'\otimes A'}(4)$. This implies that 
$\delta ^+(Z^2/2)\in J_{A'\otimes A'}$, use Proposition \ref{P3.14}a). 
So, $\delta ^+Z\in J_{A'\otimes A'}$ and by \eqref{Ee} we obtain 
$\delta ^+(Z^2/2)\in J_{A'\otimes A'}$.  
\end{proof}

By above Corollary the correspondences 
$m'\mapsto {\wt Z}^2/2\operatorname{mod}J_{A'}$ and 
$m'^1\mapsto \wt{Z}\operatorname{mod}J_{A'}$ 
give a map of filtered modules $\c M'\To \iota (A')$. 
This gives a morphism of $O'$-algebras  
$\Pi ':A(\c G_{O'}(\c M'))\To A'$. By Proposition \ref{P1.11} 
we can assume that $\Pi '$ is also a morphism of coalgebras. 
Both these coalgebras contain $B'=B\otimes _OO'$ 
and $\Pi '|_{B'}$ is isomorphism. Similarly, both the coalgebras 
have  as their  quotient the coalgebra $A(\mu _{\eta })\otimes O'$ 
and $\Pi '$ induces 
on it a coalgebra isomorphism as well. This implies that 
$\Pi '$ is isomorphism of coalgebras and 
Theorem \ref{T3.1} is completely proved.


\begin{thebibliography}{xxx}


\bibitem{Ab1} {\sc V.\,Abrashkin} \textit {Group schemes of period $p$},  
 {Math. USSR Izvestiya} {\bf 20} (1983) no.\,3, 411-433
 


\bibitem{Ab2} {\sc V.\,Abrashkin} \textit{Honda systems of group schemes of
period $p$}, 
{Math. USSR Izvestiya} {\bf 30} (1988) no.\,3,  
419-453 
 

\bibitem{Ab3} {\sc V.\,Abrashkin} \textit{Group schemes over a discrete
valuation ring with small ramification},  
{Leningrad Math. J.} {\bf 1} (1990) no.\,1, 1990, 57-97 




\bibitem{Ab} {\sc V.\,Abrashkin} \textit {Group schemes of period $p>2$}, 
{Proc. London Math. Soc.} {\bf 101} (2010) no. 3, 207-259


\bibitem{Br} {\sc C.\,Breuil}, \textit{Groupes $p$-divisibles, 
groupes finis et modules filtr\'es}, 
{Ann. of Math.} {\bf 152} {2000} no. 2, 489-549


\bibitem{Kim} {\sc W.\,Kim} \textit{The classification of $p$-divisible 
groups over 2-adic discrete valuation rings},  
{arXiv:1007.1904}. Preprint.  


\bibitem{Ki1} {\sc M.\, Kisin} \textit{Modularity of 2-adic Barsotti-Tate 
representations}, {Inv. Math.} {\bf 178} (2009), 587-634

\bibitem{Ki2} {\sc M.\, Kisin} 
\textit{Moduli of finite flat group schemes, and modularity}, 
{Ann.  of Math.} {\bf 170} (2009),  1085-1180



\bibitem{Lau} {\sc E.\,Lau}, \textit{A relation between crystalline 
Dieudonn\'e theory and Dieudonn\'e displays}, 
{arXiv:1006.2720}. Preprint. 

\bibitem{Liu} {\sc T.\,Liu}, \textit{The correspondence between 
Barsotti-Tate groups and Kisin modules when $p=2$}, 
available at math.purdue.edu/~tongliu/pub/2BT.pdf. 
 Preprint.






\end{thebibliography}
\end{document}